\theoremstyle{thmstyleone}%
\newtheorem{theorem}{Theorem}
\theoremstyle{thmstyletwo}%
\newtheorem{remark}{Remark}%
\newtheorem{lemma}{Lemma}
\newtheorem{claim}{Claim}
\newtheorem{corollary}{Corollary}
\theoremstyle{thmstylethree}%
\newtheorem{assumption}{Assumption}
\numberwithin{equation}{section}
\newcommand\blfootnote[1]{%
  \begingroup
  \renewcommand\thefootnote{}\footnote{#1}%
  \addtocounter{footnote}{-1}%
  \endgroup
}
\author{
  Bihan Chatterjee$^1$,  
  Siva Theja Maguluri$^2$,
  Debankur Mukherjee$^3$
}
\title{Higher-Order Approximations of Sojourn Times in M/G/1 Queues via Stein’s Method}
\date{}
\begin{document}
\maketitle





\begin{abstract}
 We study the stationary sojourn time distribution in an M/G/1 queue operating under heavy traffic. It is known that the sojourn time converges to an exponential distribution in the limit. 
  Our focus is on obtaining pre-asymptotic, higher-order approximations that go beyond the classical exponential limit.  Using Stein’s method, we develop an approach based on higher-order expansions of the generator of the underlying Markov process. The key technical step is to represent higher-order derivatives in terms of lower-order ones and control the resulting error via derivative bounds of the Stein equation. Under suitable moment-matching conditions on the service distribution, we show that the approximation error decays as a high-order power of the slack parameter $\varepsilon=1-\rho$. Error bounds are established in the Zolotarev metric, which further imply bounds on the Wasserstein distance as well as the moments. Our results demonstrate that the accuracy of the exponential approximation can be systematically improved by matching progressively more moments of the service distribution.

\blfootnote{$^1$Georgia Institute of Technology, \emph{Email:} \href{bchatterjee9@gatech.edu}{bchatterjee9@gatech.edu}}
\blfootnote{$^2$Georgia Institute of Technology, \emph{Email:}  \href{mailto:siva.theja@gatech.edu}{siva.theja@gatech.edu}}
\blfootnote{$^3$Georgia Institute of Technology, \emph{Email:} \href{mailto:debankur.mukherjee@isye.gatech.edu}{debankur.mukherjee@isye.gatech.edu}}

\end{abstract}


\maketitle
    
\section{Introduction}
\label{sec1}
In service systems such as web servers, data centers, or call centers, the total time a task spends in the system, from arrival to service completion, is a key measure of both user-perceived quality and system efficiency. This end-to-end duration, known as the \emph{sojourn time}, includes both the waiting time in queue and the service time, making it a comprehensive performance metric. For single-sever queues with Poisson arrivals and exponential service requirements ($M/M/1$), the stationary sojourn time follows an exponential distribution. While the $M/M/1$ queue provides valuable analytical insights due to its simplicity, its assumption of exponentially distributed service times is often too restrictive for real-world systems. 
The $M/G/1$ queue relaxes this assumption by allowing for a general service time distribution, providing a more flexible and realistic framework. However, in the $M/G/1$ setting, the stationary sojourn time does not admit a closed-form expression. Instead, it is only characterized through its Laplace–Stieltjes transform via the Pollaczek–Khinchine formula~\cite{Pollaczek30, Khinchin32, Daigle05}. In the so called \emph{heavy-traffic regime}, it is known that the scaled stationary waiting time approaches an exponential distribution. Consequently, the scaled stationary sojourn time, being the sum of the stationary waiting and service times, also converges to an exponential distribution. These, however, are asymptotic results that apply only as the system load approaches capacity, and without explicit pre-limit error bounds their applicability in practice remains limited.

Motivated by the above, the goal of this paper is to characterize the pre-asymptotic behavior of the steady-state sojourn time distribution. Specifically, for a broad class of service time distributions, we study the convergence rates in terms of the distance between the stationary sojourn time distribution and its exponential limit. For this purpose, we work with the Zolotarev distance~\eqref{eq:zol}, a natural extension of the well-studied Wasserstein distance that controls convergence in distribution together with convergence of moments. 
Our main result provides conditions under which the distance to the limit decays at an arbitrarily high polynomial rate, i.e., it is bounded by a quantity of the form $\varepsilon^k$ for any $k >0$, where $\varepsilon=1-\rho$ is the slack parameter and $\rho$ is the server load.  
In particular, we show that if the first $k+1$ moments of the service distribution match those of the exponential distribution, then the Zolotarev-$k$ distance between the scaled steady-state sojourn time $\varepsilon D$ and its exponential limit $Z\sim \mathrm{Exp}(\mu)$ is $O(\varepsilon^k)$. This moment-matching requirement ensures the low-order moment alignment needed for a finite Zolotarev-$k$ distance, as this metric is sensitive to mismatches in low-order moments (see Remark \ref{remark1}). Practically, the results are most relevant for modest orders 
$k$, where matching a small number of moments yields a substantial improvement over the classical exponential heavy-traffic approximation.

Much of our analysis builds on Stein’s method, a framework introduced by Charles Stein in 1972 \cite{cs}. This method provides a general approach in probability theory for bounding the distance between two distributions with respect to a chosen probability metric, and is best known for establishing rates of convergence. In broad terms, the method consists of three steps. First, one formulates an equation that characterizes the target distribution, known as the \textit{Stein equation}. Second, one derives suitable \textit{gradient bounds} for the solutions of this equation. Finally, combining these ingredients with coupling techniques yields explicit error bounds between the pre-limit distribution and the target distribution. Over the years, Stein’s method has been developed in many directions to handle a wide variety of target distributions; a comprehensive overview can be found in the survey~\cite{NR}. \\

\noindent
\textbf{Overview of related works.} 
The analysis of $M/G/1$ and, more generally, $G/G/1$ queues has a long history, beginning with the classical works of Pollaczek \cite{Pollaczek30} and Khinchine \cite{Khinchin32}. Over the last five decades, a substantial literature has developed around the simplifications that arise in the study of such systems in the presence of heavy traffic. The earliest heavy-traffic approximation was obtained by Kingman \cite{king}, who derived the limiting distribution of the steady-state waiting time in the $G/G/1$ queue.

While asymptotic results provide valuable insights, many modern applications require accurate prelimit characterizations. In this regard, Stein’s method has emerged as a powerful tool. The generator comparison framework was first used in \cite{Ba} to approximate stationary distributions of Markov processes. There it was shown that when the approximating distribution coincides with the stationary distribution of a Markov process, Stein’s method can be used to establish distance bounds, for example, by recovering the Poisson distribution as the stationary distribution of a birth–death process. These ideas were extended in \cite{Ba2} to the multivariate normal distribution. Stein’s method has since been applied to single-server queues, as in \cite{WG}. Our work builds on this line by providing higher-order approximations for the $M/G/1$ queue, going beyond the results in \cite{WG}. 
Braverman, Dai, and Fang~\cite{BDF} further advanced this direction by introducing higher-order Stein’s method in the context of queueing theory, presenting both theoretical and numerical results across several models.

An alternative approach to refined approximations has come through diffusion limits. Classical works such as \cite{BX,KT} exploit this connection, and diffusion approximations for queueing systems have been developed extensively in \cite{MB,HA,TO,PE,RE,RJ}. Following Gurvich \cite{GU}, Stein’s method has also proved effective in rigorously establishing diffusion approximations. In related work, Braverman and Dai \cite{BAD,DB} provided approximations between Erlang queueing models and their limiting stationary distributions. More recent contributions include \cite{MH,BrAn}, which analyze the Join-the-Shortest-Queue (JSQ) model under different regimes using Stein’s method. Finally, higher-order approximations under the Zolotarev metric have been investigated in the context of normal distributions in \cite{GA,MF}.

Recent works \cite{BS2024, BS2025} have extended the generator-comparison approach of Stein’s method to queueing systems with generally distributed primitives by working with a representation in which the state is augmented with residual inter-event times, leading to a piecewise-deterministic Markov process  representation. In this setting, Braverman and Scully \cite{BS2024} analyze steady-state diffusion approximation error bounds for several canonical queueing models using a generator-comparison Stein approach formulated via the basic adjoint relationship (BAR), rather than a classical infinitesimal generator. They expand the BAR jump terms using Taylor’s theorem and then apply a Palm inversion step to convert event-stationary averages into time-stationary averages, which allows them to identify the corresponding diffusion generator and express the approximation error.

\section{Model Description and Preliminaries}
We consider an $M/G/1$ queue, a first-come-first-served system with a single server and infinite buffer capacity. Arrivals follow a Poisson process with rate $\lambda$, and service times are independent and identically distributed. The two performance measures of interest are the waiting time and the sojourn time. The waiting time is the duration a task spends in the system before service begins, while the sojourn time is the total time spent in the system, including its own service. 
In the following, $S$, $W$, and $D$ will denote random variables representing the service time of a task, the steady-state waiting time, and the steady-state sojourn time, respectively. The load of the system is $\rho = \lambda \mathbb{E}[S]$, and we define the slack parameter $\varepsilon = 1 - \rho$. Our convergence results will be expressed in terms of this parameter.

To measure the distance between distributions, we use the Wasserstein and Zolotarev metrics. The Wasserstein-$k$ distance between two probability measures $\mu$ and $\nu$ on $\mathbb{R}$ is defined as
\begin{align*}
    d_{W,k}(\mu,\nu)=\inf_{\gamma \in \Gamma(\mu,\nu)} \left[ \int |x-y|^k d \gamma \right]^{\frac{1}{k}} ,
\end{align*}
where $\Gamma(\mu,\nu)$ is the set of all couplings $\gamma$ of $\mu$ and $\nu$ on $\mathbb{R} \times \mathbb{R}$, i.e., $\int_{\mathbb R} \gamma(x,y)dy=\mu(x)$ and $\int_{\mathbb R} \gamma(x,y)dx=\nu(y).$
For the special case $k=1$, the Kantorovich–Rubinstein duality theorem~\cite{Villani} states that
$d_{W,1}(\mu,\nu)=\sup_{h \in Lip(1)} \big|\int h d\mu-\int h d\nu \big|.$
This motivates the Zolotarev distance of order $k$, defined for two probability distributions $\mu$ and $\nu$ as 
\begin{equation}\label{eq:zol}
    d_{Zol,k}(\mu,\nu)=\sup_{\|h^{(k)}\| \leq 1}  \bigg|\int h d\mu-\int h d\nu \bigg|,
\end{equation}
where $h^{(k)}$ denotes the $k$th derivative of $h$.
For random variables $X$ and $Y$ with laws $\mu$ and $\nu$ respectively, we write $d_{W,k}(X,Y)$ and $d_{Zol,k}(X,Y)$ instead, and when the measures are clear from the context, we write
\begin{align}\label{zolh}
    d_{Zol,k}(X,Y)=\sup_{\|h^{(k)}\| \leq 1} |\mathbb E[h(X)]-\mathbb E[h(Y)]|.
\end{align}
The Zolotarev-$k$ distance can be viewed as a natural generalization of the dual form of the Wasserstein-$1$ distance. 
It is well known~\cite[Theorem 6.9]{Villani} that weak convergence together with convergence of moments is equivalent to the convergence in the Wasserstein $k$-distance. Moreover, the Zolotarev distance of order $k$ induces the same topology, namely convergence in distribution together with convergence of moments up to order $k$ \cite{MF,BDS}.
The Wasserstein and Zolotarev distances are also related through the  inequalities given in the following lemma, whose proof is deferred to Appendix \ref{app:c}.
\begin{lemma}\label{lemmadistancebound}
   For probability laws $\mu,\sigma$ we have 
    \begin{align}\label{bound Wasserstein}
        d_{W,k}(\mu, \sigma) \leq 2(2^{(k-2)}k.d_{Zol,k}(\mu, \sigma))^{\frac{1}{k}}.
    \end{align}
Moreover, if the first $k-1$ moments match, then
\begin{align}
    d_{Zol,k}(\mu,\sigma) \leq L_k d_{W,k}(\mu, \sigma)\bigg(\int |x|^k \mu(dx) +\int |x|^k \sigma(dx)\bigg)^{(k-1)/k},
\end{align}
where $L_k$ is a constant depending on $k$.
\end{lemma}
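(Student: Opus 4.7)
The plan is to handle the two inequalities in turn; the second follows from a Taylor-plus-moment-matching argument, while the first is the harder reverse-direction estimate.

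For the second inequality, fix any test function $h$ with $\|h^{(k)}\|_\infty \leq 1$, and split $h = P + g$ where $P(x) = \sum_{j=0}^{k-1} h^{(j)}(0)\,x^j/j!$ is the Taylor polynomial of degree $k-1$ at $0$. The remainder $g = h - P$ satisfies $g^{(j)}(0) = 0$ for $0 \leq j \leq k-1$ and $\|g^{(k)}\|_\infty \leq 1$. Since the first $k-1$ moments of $\mu$ and $\sigma$ agree, $\int P\,d\mu = \int P\,d\sigma$, so the Zolotarev difference collapses to $\int g\,d\mu - \int g\,d\sigma$. Iterating the bound $|g^{(k)}|\leq 1$ starting from $0$ gives the pointwise estimate $|g'(x)| \leq |x|^{k-1}/(k-1)!$, and hence, for any coupling $(X,Y)$ of $\mu$ and $\sigma$,
\[
|g(X) - g(Y)| \;\leq\; \int_{\min(X,Y)}^{\max(X,Y)} |g'(t)|\,dt \;\leq\; \frac{|X-Y|\,\max(|X|,|Y|)^{k-1}}{(k-1)!}.
\]
Taking expectations, applying H\"older's inequality with conjugate exponents $k$ and $k/(k-1)$, and then passing to the infimum over couplings yields the stated bound with $L_k = 1/(k-1)!$.

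For the first inequality, I would use the one-dimensional representation $d_{W,k}(\mu,\sigma)^k = \int_0^1 |F_\mu^{-1}(u) - F_\sigma^{-1}(u)|^k\,du$ together with the Zolotarev class evaluated at the specific $C^k$ test functions $h_t(x) = (x-t)_+^k/k!$ (and their reflections $\tilde h_t(x)=(t-x)_+^k/k!$), each of which has $\|h_t^{(k)}\|_\infty \leq 1$. After $k-1$ successive integrations by parts, the corresponding Zolotarev bounds translate into uniform-in-$t$ control of the signed $(k-1)$-fold integrated CDF differences. A layer-cake decomposition
\[
d_{W,k}^k = k\int_0^\infty t^{k-1}\,\mathrm{Leb}\{u : |F_\mu^{-1}(u) - F_\sigma^{-1}(u)| > t\}\,dt,
\]
combined with an interpolation step that splits the $t$-integral at an optimally chosen scale, then extracts the $1/k$ exponent; the particular constants $2(2^{k-2}k)^{1/k}$ come out of this balance. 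An alternative route is via smoothing: convolve any Lipschitz-$1$ function $\psi$ with a kernel at scale $\varepsilon$ to produce a test function whose $k$th derivative is of order $\varepsilon^{-(k-1)}$, balance the smoothing error $O(\varepsilon)$ against $O(d_{Zol,k}/\varepsilon^{k-1})$ to obtain $d_{W,1}\leq C_k\,d_{Zol,k}^{1/k}$, and bootstrap to $d_{W,k}$ via the quantile representation.

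The hard step is the first inequality: converting control on a smoother metric back to a rougher one is generally delicate, and the combinatorial constants come out of the specific interpolation step. The second inequality, once the Taylor-plus-moment-matching decomposition is in place, reduces to a routine H\"older estimate and presents no serious difficulty.
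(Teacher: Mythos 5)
Your treatment of the second inequality is correct and, unlike the paper, is self-contained: the paper simply cites a result of Barbour--Dembo--Socquet (reference \texttt{[BDS]}), whereas you give a direct two-line proof via Taylor subtraction, the iterated derivative bound $|g'(x)| \le |x|^{k-1}/(k-1)!$, and H\"older's inequality. Note that $\max(|X|,|Y|)^{k} \le |X|^k + |Y|^k$ is needed before passing to the infimum over couplings, so that the second factor is coupling-independent; with that in place the argument yields the explicit constant $L_k = 1/(k-1)!$, which is a genuine improvement in specificity over the paper's citation.

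However, your treatment of the first inequality is not a proof: you offer two candidate strategies (layer-cake plus interpolation on quantile differences; smoothing by convolution) but carry neither through, and you acknowledge that the constant $2(2^{k-2}k)^{1/k}$ ``comes out of the specific interpolation step'' without performing it. The paper does not prove this inequality either --- it invokes Remark~1 of reference \texttt{[HB]} (with $l=k-1$, $\beta=1$), which gives $d_{W,k}(\mu,\sigma) \le 2(2^{k-2}k\,\rho_k(\mu,\sigma))^{1/k}$ for the restricted Zolotarev distance $\rho_k$ over test functions additionally vanishing to order $k-1$ at $0$ --- and then observes, via the same Taylor-subtraction trick you use, that $\rho_k = d_{Zol,k}$ when the first $k-1$ moments agree (and that the inequality is vacuous when they do not, since then $d_{Zol,k}=\infty$). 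So to complete your argument in the spirit of the paper you would either need to cite that external estimate, or to carry out one of your sketches to the point where it produces the claimed constant. As written, this half of the lemma is asserted rather than established.

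You also omit the paper's brief observation that the first inequality holds trivially when the moments do not match because the right side is then infinite; your phrasing implicitly assumes moment matching throughout, which is fine for the second inequality (where it is hypothesized) but leaves a small case to close out in the first.
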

A recent work \cite{BB25} obtains tighter constants in the above inequalities in the special case of $k=2$, while also generalizing to higher dimensions. 

\section{Main Results}\label{sec2}


We now turn to the stationary sojourn time distribution $D$ of the $M/G/1$ queue. Our aim is to quantify how accurately $D$, when appropriately scaled, can be approximated by the exponential distribution in heavy traffic, with the approximation error bounded by progressively higher powers of the slack parameter $\varepsilon$ introduced above.

Most existing analyses focus on the stationary waiting time distribution $W$. However, $W$ has a positive probability $\varepsilon$ of being exactly zero in steady state \cite{Pollaczek30}, which makes it difficult to approximate by a continuous distribution with error smaller than order $\varepsilon$. To bypass this limitation, we instead study the sojourn time $D$, which admits a continuous stationary distribution and is therefore more amenable to refined approximations.
A similar approach is used to bridge the gap between discrete distribution and continuous limits in the literature. For instance, to obtain Edgeworth-type expansions for central limit theorem of lattice distributions \cite[Theorem 2 in Chapter XVI, Section 4]{Feller} , one smooths out the scaled, centered empirical average to get higher order Normal approximations.

To state our main result, we impose two regularity conditions on the service time distribution $S$:
\begin{assumption}\label{assump1}
     \text{The distribution of $S$ has a density $g$ with $\lim_{y \to \infty}g(y)=0$.}
\end{assumption}
\begin{assumption}\label{assump2}
    The integral $\int_0^\infty |y^k(g'(y)+\mu g(y))|dy$ is bounded by a constant $C_1$. 
\end{assumption}
\noindent
Assumptions~\ref{assump1} and~\ref{assump2} are technical regularity conditions that arise from the proof technique. In our analysis, controlling higher-order remainder terms requires applying Taylor expansions inside integrals involving the service-time density and then performing integration by parts. These steps  generate expressions in which the density and its derivatives are multiplied by powers of the service-time variables. Assumption~\ref{assump1} is a mild tail condition ensuring that the service-time density decays at infinity and holds for most standard continuous service distributions. Assumption~\ref{assump2} is used to guarantee that the error term coming from the Taylor expansion is integrable. This condition is satisfied by a broad class of distributions including gamma, Weibull, phase-type, and lognormal distributions. In contrast, the assumptions may fail for  service-time distributions that do not admit a density, for distributions whose densities are not sufficiently smooth, or for heavy-tailed distributions that do not possess the finite moments required for the chosen approximation order.
With these assumptions in place, we can describe our main theorem. Informally, it shows that if the first $k+1$ moments of the service time distribution match those of the exponential distribution, then the scaled stationary sojourn time distribution becomes exponentially close in Zolotarev distance, with error decaying at least as fast as $\varepsilon^k$.
\begin{theorem}\label{thm1}
Let $D$ denote the steady-state sojourn time of an $M/G/1$ queue and let $Z \sim \mathrm{Exp}(\mu)$. Suppose Assumptions~\ref{assump1} and~\ref{assump2} hold, and the service time distribution satisfies $\mathbb{E}[S^i] = \mathbb{E}[Z^i]$ for all integers $i=1,2,\ldots,k+1$. Then for any $k\geq 2$,
\begin{align*}   
    d_{Zol,k}(\varepsilon D,Z)\leq C_2\varepsilon^k,
\end{align*}
where
\begin{align*}
    C_2=\frac{1}{\mu k!}C_1 +2  \left( \sum_{j=1}^{2^{k-1}}b_{k,j}   \left[ \int_0^\infty s^{d_{k,j}} dF(s)\right]\right).
\end{align*}
Here $F$ is the distribution function of the service time, and the constants $b_{k,1},b_{k,2}, \ldots, b_{k,2^{k-1}}$ and $d_{k,1},d_{k,2}, \ldots, d_{k,2^{k-1}}$ are defined in the proof of Lemma \ref{lemma general k}. Note that we have $3 \leq d_{k,1},d_{k,2}, \ldots, d_{k,2^{k-1}} \leq k+2$.
\end{theorem}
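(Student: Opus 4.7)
The plan is to apply Stein's method with a higher-order expansion of the generator of the workload process, using the moment-matching hypothesis to systematically cancel every error term of order below $\varepsilon^k$.

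First, I would set up the Stein operator $\mathcal{A}f(x) = f'(x) - \mu f(x)$ associated with $Z \sim \mathrm{Exp}(\mu)$, so that for any test function $h$ with $\|h^{(k)}\|_\infty \leq 1$ the Stein equation $\mathcal{A}f = h - \mathbb{E}[h(Z)]$ admits a smooth solution $f$, and derive the standard derivative estimates on $f$ from its explicit integral representation. Differentiating the Stein equation $j$ times yields the recursion
\begin{align*}
f^{(j+1)}(x) = \mu\, f^{(j)}(x) + h^{(j)}(x),
\end{align*}
which will be used repeatedly to rewrite higher-order derivatives of $f$ in terms of lower-order ones, with the residual contributions controlled through $\|h^{(k)}\|_\infty \leq 1$.

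Next, I would invoke the generator of the stationary workload $V$, a piecewise-deterministic Markov process with $\mathcal{L}\phi(v) = -\phi'(v)\mathbf{1}_{\{v>0\}} + \lambda \mathbb{E}_S[\phi(v+S) - \phi(v)]$. Stationarity gives $\mathbb{E}[\mathcal{L}\phi(V)] = 0$, and plugging in $\phi(v) = f(\varepsilon v)$, Taylor-expanding the jump term to order $k+1$ in $S$, and invoking the matched-moment identities $\mathbb{E}[S^j] = j!/\mu^j$ for $j \leq k+1$ together with $\lambda = \mu(1-\varepsilon)$ collapses the first $k+1$ Taylor coefficients into a clean identity relating $\mathbb{E}[f^{(j)}(\varepsilon V)]$ across orders $j$, modulo a Taylor remainder of order $\varepsilon^{k+2}$ and a boundary term coming from the atom of $V$ at $0$. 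Combining this with the representation $D = V + S$ and expanding $\mathbb{E}[\mathcal{A}f(\varepsilon(V+S))]$ analogously reduces the target $\mathbb{E}[h(\varepsilon D)] - \mathbb{E}[h(Z)] = \mathbb{E}[\mathcal{A}f(\varepsilon D)]$ to bounding expectations of the form $\varepsilon^{j-1}\mathbb{E}[f^{(j)}(\varepsilon V)]$ for $j \geq 2$.

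The key technical device is the recursion encoded in Lemma \ref{lemma general k}. Each occurrence of $\mathbb{E}[f^{(j)}(\varepsilon V)]$ is rewritten by combining the Stein recursion $f^{(j+1)} = \mu f^{(j)} + h^{(j)}$ with another application of the generator identity, which splits the term into two: one branch is re-inserted into the recursion at a reduced order, while the other is a terminal leaf controlled by $\int_0^\infty s^{d_{k,j}}dF(s)$ because $\|h^{(k)}\|_\infty \leq 1$. Unrolled $k-1$ times, this binary branching produces exactly $2^{k-1}$ terminal terms with exponents $d_{k,j} \in \{3, \ldots, k+2\}$ and weights $b_{k,j}$, which is precisely the structure appearing in the second summand of $C_2$.

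Finally, the Taylor remainder $\lambda\mathbb{E}[R_{k+2}(V,S)]$ contains a factor of $S^{k+2}$ whose expectation is not assumed to be finite. I would integrate by parts in the service variable against the service density $g$, converting the weight $s^{k+2}g(s)$ into an expression weighted by $s^k(g'(s)+\mu g(s))$. Assumption \ref{assump2} then bounds this by $C_1$, while Assumption \ref{assump1} ensures that the boundary terms at infinity vanish, yielding the $\frac{1}{\mu k!}C_1\,\varepsilon^k$ contribution to $C_2$. I expect the main technical obstacles to be (i) orchestrating the integration by parts so that precisely the combination $g'(s) + \mu g(s)$ from Assumption \ref{assump2} appears, thereby sidestepping the potentially infinite $(k+2)$-th moment of $S$, and (ii) the combinatorial bookkeeping of the $2^{k-1}$ branches so that the exponents $d_{k,j}$ remain in $\{3, \ldots, k+2\}$ and the constants $b_{k,j}$ stay finite.
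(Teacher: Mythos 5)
Your proposal captures the broad architecture the paper uses (exponential Stein equation, expansion of the workload generator, moment-matching to cancel lower orders, a recursive reduction of intermediate derivatives), but two of the central mechanisms are misidentified, and each of these is a genuine gap rather than a cosmetic difference.

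First, the recursive reduction in Lemma~\ref{lemma general k} does \emph{not} use the Stein recursion $f^{(j+1)} = \mu f^{(j)} + h^{(j)}$. If you did use it, you would introduce $\mathbb{E}[h^{(j)}(\varepsilon W + \varepsilon y)]$ for intermediate orders $j < k$, which is not controlled: the Zolotarev-$k$ class only gives you $\|h^{(k)}\| \le 1$, and $\mathbb{E}[h^{(j)}(\varepsilon W)]$ involves $\mathbb{E}[W^{k-j}] = O(\varepsilon^{-(k-j)})$, so this branch is $O(1)$, not $O(\varepsilon^k)$. The paper avoids ever touching $h^{(j)}$ inside Lemma~\ref{lemma general k}. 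Instead, it proves an induction $P(2) \Rightarrow \ldots \Rightarrow P(k)$ and, within the proof of $P(k)$, applies the already-established bound $P(r-1)$ to a rescaled \emph{lower-order derivative of $f$} (e.g.\ $u_{k-r+3,k-r+2} = c \cdot f^{(k-r+1)}$), chosen so that the Taylor remainder of each such application lands precisely on $f^{(k+2)}$. The $2^{k-1}$ count comes from unrolling $P(2), P(3), \ldots, P(k-1)$ with their $1, 2, \ldots, 2^{k-3}$ remainder terms, not from a Stein-recursion/generator binary split. Only at the very end, after all intermediate derivatives have been eliminated in favor of $f^{(k+1)}$ and $f^{(k+2)}$, is Lemma~\ref{lemma3} invoked to pass from $f$ to $h$.

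Second, the role you assign to the integration by parts and Assumption~\ref{assump2} is not right. You claim it sidesteps a possibly infinite $\mathbb{E}[S^{k+2}]$ arising from the generator's Taylor remainder; but in the paper that remainder is bounded directly by $\|f^{(k+2)}\|\int_0^\infty s^{k+2}\,dF(s)$, and this term sits undisturbed inside $C_2$ --- the theorem is simply vacuous if that moment is infinite. The integration by parts in $y$ against $g$ serves a different purpose: after conditioning on $S = y$ and subtracting the $\varepsilon$-shifted boundary terms $\varepsilon f_h''(\varepsilon y) - \mu f_h'(\varepsilon y)$ that Lemma~\ref{lemma general k} leaves behind, the paper integrates $\int \varepsilon f_h''(\varepsilon y) g(y)\,dy - \mu \int f_h'(\varepsilon y) g(y)\,dy$ by parts to produce $-\int f_h'(\varepsilon y)\,(g'(y) + \mu g(y))\,dy$, then Taylor-expands $f_h'$ around $0$ to order $k$ so that moment matching kills orders $1$ through $k-1$ and the surviving $k$-th-order remainder $\int \tfrac{\varepsilon^k y^k}{k!} f_h^{(k+1)}(\xi)\,(g' + \mu g)\,dy$ is controlled exactly by Assumption~\ref{assump2}. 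This step is about recovering the $\mu f_h'(0)$ term in the Stein equation and producing the $\tfrac{1}{\mu k!} C_1$ contribution to $C_2$, not about moment finiteness. You also skip the crucial fixed-$y$ shift: the whole argument hinges on working with the process $\varepsilon W_t + \varepsilon y$ for each fixed $y$ and then averaging over $y \sim g$, and without isolating that shift the boundary bookkeeping does not close.
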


\begin{remark}\label{remark1}
Note that for the Zolotarev-$k$ distance between two distributions to be finite the first $k-1$ moments must coincide. This is because we can consider the test function $h(x)=c x^j$, where $j$ is an integer such that $1 \leq j<k$. Since $h^{(k)}(x)=0$, we have
\begin{align*}
    d_{Zol,k}(X,Y) \geq \bigg|\int (cx^j) d\mu-\int (cx^j) d\nu \bigg| \geq |c|\bigg|\int x^j d\mu-\int x^j d\nu \bigg|.
\end{align*}
Therefore, as $c \to \infty$, we will have $d_{Zol,k}(X,Y) \to \infty$, unless the first $k-1$ moments are equal.
In our setting, this is ensured by requiring the service time distribution to match the exponential distribution up to its $(k+1)^{th}$ moment. To see why, note that by \cite[Theorem 5]{Takacs}, the moments of the stationary waiting time satisfy the recursion
\begin{align}\label{wtrecursion}
    \mathbb E[W^k] = \frac\lambda{1-\rho}\sum_{i=1}^k\binom ki \frac{\mathbb E[S^{i+1}]}{i+1}\mathbb E[W^{k-i}],
\end{align}
and hence the $k^{th}$ moment of the sojourn time can be written as
\begin{align}\label{strecursion}
    \mathbb E[D^k] = \sum_{i=0}^k \binom ki \mathbb E[W^i]\mathbb E [S^{k-i}].
\end{align}
Thus, $\mathbb{E}[D^k]$ depends only on the first $k+1$ moments of the service time distribution. By assumption, these moments match those of the exponential distribution, and therefore the first $k$ moments of the sojourn time in the $M/G/1$ system agree with those of the $M/M/1$ system. This alignment of moments is precisely what allows us to derive a $k^{th}$ order error bound.
\end{remark}

\begin{remark}
An immediate implication of Theorem~\ref{thm1} is that by matching more moments of the service distribution with the exponential, one can drive the error in the Zolotarev metric to decay at arbitrarily high polynomial rates in $\varepsilon$. In other words, the classical exponential approximation for heavy-traffic sojourn times can be systematically refined to achieve increasing levels of accuracy.
\end{remark}

\begin{remark}
A key technical contribution of this paper is the inductive framework developed to control the error terms that arise in applying Stein’s method under the Zolotarev metric. See Section~\ref{sec general k} and the discussion in Remark~\ref{rem:induction} for more details. To obtain bounds of order $\varepsilon^k$, one naturally encounters expansions involving derivatives of the Stein solution up to order $(k+2)$. Our approach systematically eliminates all intermediate derivatives by expressing each higher-order term inductively in terms of lower-order derivatives and the $(k+2)^{\text{nd}}$ derivative alone, following the ideas introduced in \cite{BDF}. This reduction is crucial, as it enables all error terms to be bounded using only uniform control on the $(k+1)^{\text{st}}$ and $(k+2)^{\text{nd}}$ derivatives of the Stein solution.
\end{remark}

Finally, using the relationship between Wasserstein and Zolotarev distances discussed earlier, we obtain the following corollary.
\begin{corollary}\label{cor2}
    If the conditions of Theorem~\ref{thm1} hold for some $k$, then the Wasserstein distance satisfies
    \begin{align*}
        d_{W,k}( \varepsilon D,Z) \leq 2(2^{(k-2)}k.C_2)^{\frac{1}{k}} \varepsilon.
    \end{align*}
\end{corollary}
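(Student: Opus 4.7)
The plan is to derive the Wasserstein bound as an immediate consequence of Theorem~\ref{thm1} together with the Wasserstein–Zolotarev comparison already established in Lemma~\ref{lemmadistancebound}. Concretely, I would apply the first inequality of that lemma, namely $d_{W,k}(\mu,\sigma) \leq 2\bigl(2^{k-2}\, k\, d_{Zol,k}(\mu,\sigma)\bigr)^{1/k}$, to the pair of laws $\mu = \mathcal{L}(\varepsilon D)$ and $\sigma = \mathcal{L}(Z)$. Plugging in the Zolotarev bound $d_{Zol,k}(\varepsilon D, Z) \leq C_2 \varepsilon^k$ supplied by Theorem~\ref{thm1} and then pulling the factor $\varepsilon^k$ out of the $k$-th root immediately yields $2(2^{k-2}\, k\, C_2)^{1/k}\, \varepsilon$, which is precisely the claimed inequality.

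There is no substantive obstacle: all of the analytical effort is already absorbed into Theorem~\ref{thm1} (the generator-expansion Stein argument together with the moment-matching reduction) and into Lemma~\ref{lemmadistancebound} (the general distance comparison, whose proof is deferred to Appendix~\ref{app:c}), so the corollary is a one-line substitution. The only point worth recording is interpretive rather than technical: the $k$-th root in Lemma~\ref{lemmadistancebound} converts an $O(\varepsilon^k)$ bound in the Zolotarev-$k$ metric into an $O(\varepsilon)$ bound in the Wasserstein-$k$ metric, so matching additional moments of the service-time distribution does not sharpen the $\varepsilon$-rate in the Wasserstein distance itself but instead strengthens the metric (increases the permissible $k$) in which the linear rate $O(\varepsilon)$ continues to hold. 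This makes the corollary a natural complement to Theorem~\ref{thm1}, giving convergence of all moments up to order $k$ with an explicit linear-in-$\varepsilon$ rate.
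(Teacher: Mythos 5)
Your proof is correct and is exactly the argument the paper uses: apply the first inequality of Lemma~\ref{lemmadistancebound} to $\mu=\mathcal L(\varepsilon D)$, $\sigma=\mathcal L(Z)$, substitute the bound $d_{Zol,k}(\varepsilon D,Z)\leq C_2\varepsilon^k$ from Theorem~\ref{thm1}, and extract $\varepsilon$ from the $k$-th root. The paper's proof is the same one-line substitution, so there is nothing to add.
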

\begin{proof}[Proof of Corollary \ref{cor2}]
    The claim follows directly from Theorem \ref{thm1} and 
    inequality~\eqref{bound Wasserstein}.
\end{proof}

\begin{corollary}\label{cor3}
 If the conditions of Theorem~\ref{thm1} hold for some $k$, then 
 \begin{align*}
     \bigg|\mathbb E[(\varepsilon D)^k]-\frac{k!}{\mu^k}\bigg| \leq C_2 k!\varepsilon^k
 \end{align*}
 \end{corollary}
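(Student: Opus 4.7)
The plan is to apply Theorem \ref{thm1} with the specific polynomial test function $h(x) = x^k/k!$. Since $h^{(k)}(x) \equiv 1$, we have $\|h^{(k)}\| = 1$, so $h$ lies in the admissible class of the Zolotarev-$k$ distance in \eqref{zolh}. Before invoking the bound, I would verify that both expectations $\mathbb{E}[h(\varepsilon D)]$ and $\mathbb{E}[h(Z)]$ are finite: the latter by direct computation for the exponential distribution, and the former because the moment-matching hypothesis $\mathbb{E}[S^{k+1}] = \mathbb{E}[Z^{k+1}] < \infty$, combined with the Tak\'acs recursions \eqref{wtrecursion}--\eqref{strecursion}, yields $\mathbb{E}[D^k] < \infty$.

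With $h$ admissible, the variational representation \eqref{zolh} gives
$$\left| \mathbb{E}\!\left[\frac{(\varepsilon D)^k}{k!}\right] - \mathbb{E}\!\left[\frac{Z^k}{k!}\right] \right| \;\leq\; d_{Zol,k}(\varepsilon D, Z) \;\leq\; C_2 \varepsilon^k,$$
where the second inequality is Theorem \ref{thm1}. Multiplying through by $k!$ and using the standard moment formula $\mathbb{E}[Z^k] = k!/\mu^k$ for $Z \sim \mathrm{Exp}(\mu)$ delivers the claimed bound.

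There is no genuine obstacle here---the corollary is an essentially immediate consequence of the fact that the Zolotarev-$k$ distance dominates the difference of $k$-th moments via the single polynomial test function $x^k/k!$, whose $k$-th derivative is the constant $1$. The only mild bookkeeping step is the finiteness check above, which is already ensured by the moment-matching assumption of Theorem \ref{thm1}.
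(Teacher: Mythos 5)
Your proof is correct and is essentially the paper's own argument: apply Theorem~\ref{thm1} and the variational form~\eqref{zolh} with the test function $h(x)=x^k/k!$, whose $k$-th derivative is identically $1$. The finiteness bookkeeping you add is a reasonable (and harmless) extra step, but the core reasoning is identical to the paper's one-line proof.
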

 \begin{proof}[Proof of Corollary \ref{cor3}]
     The proof follows from Theorem \ref{thm1} and \eqref{zolh} by taking $h(x)=\frac{x^k}{k!}$. 
 \end{proof}

We also provide an alternative approach in Section \ref{sec:altk=2} for the case $k=2$ of Theorem \ref{thm1}, based on the Lindley recursion representation. At present, this proof has been established only for $k=2$.

\section{Proofs}\label{sec:proofs}
This section is devoted to the proof of Theorem~\ref{thm1}. We begin by establishing two preliminary lemmas that provide the key technical ingredients for the argument. To build intuition, we first treat the case $k=2$ in Section~\ref{sec k=2}, which highlights the main ideas in a simpler setting. We then extend the reasoning to the general case $k \geq 2$ in Section~\ref{sec general k}.

Recall that our objective is to bound the Zolotarev distance 
\begin{align*}
    d_{Zol,k}(\varepsilon D,Z)= d_{Zol,k}(\varepsilon W+\varepsilon S,Z)=\sup_{\|h^{(k)}\| \leq 1}|\mathbb E[h(\varepsilon W+\varepsilon S)]-\mathbb E[(h(Z))]|.
\end{align*}
As outlined above, we approach this via Stein's method. For each test function $h$, let $f_h$ be the solution of the Stein equation for the exponential distribution 
\begin{align}\label{steq}
     G_{Z}f_h(x)= -\mu f_h'(x)+ f''_h(x)+\mu f'_h(0)=h(x)-\mathbb E[h(Z)],
\end{align}
for bounded smooth functions $f_h$ see, e.g., Dai~\cite{Dai17} for a tutorial overview and Varadhan~\cite[Chapter 16]{Varadhan11} for the RBM generator with Neumann boundary conditions. 
Consequently, \eqref{steq} yields 
\begin{align}\label{mn}
    |\mathbb E[h(\varepsilon D)]&-\mathbb E[h(Z)]|= |\mathbb E[h(\varepsilon W+\varepsilon S)]-\mathbb E[h(Z)]| \nonumber\\
    &=|\mathbb EG_{z}f_h(\varepsilon W+\varepsilon S)|=\Big|-\mu \mathbb E[f_h'(\varepsilon W+\varepsilon S)]+ \mathbb E[f_h''(\varepsilon W+\varepsilon S)]+\mu f_h'(0)\Big|.
\end{align}
Thus, bounding the Zolotarev distance reduces to bounding the right-hand side of \eqref{mn}. To achieve this, we employ the generator comparison framework introduced in \cite{BDF}. Since our result concerns sojourn time rather than waiting time, we work with a slightly modified stochastic process. 
Specifically, for a fixed $y \geq 0$, we consider the process $\varepsilon W_t+\varepsilon y$ where $W_t$ is the waiting time process. This corresponds to shifting the waiting time process by $y$ and then scaling it. The generator of this modified process is given in the following lemma, whose proof is deferred to Section \ref{sec:preliminarylemmaproof}.
\begin{lemma}\label{lemma1}
The generator of the shifted waiting time process is given by
\begin{align}\label{gen}
    G_{\varepsilon W_t+\varepsilon y}f(x+\varepsilon y)&=\lambda \int_0^\infty (f(x+\varepsilon y +\varepsilon s)-f(x+\varepsilon y))dF(s)-\varepsilon f'(x+\varepsilon y)\mathbbm{1}_{\{x>0\}},
\end{align}
where $F$ is the distribution function of the service time $S$.
\end{lemma}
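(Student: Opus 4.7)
The plan is to derive the generator by composing two standard steps: writing down the generator of the unshifted, unscaled continuous-time workload process $\{W_t\}$, and then transporting that operator through the affine change of variables $w \mapsto \varepsilon w + \varepsilon y$.

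First I would recall that in the $M/G/1$ queue, the workload (virtual waiting time) process $W_t$ is a piecewise-deterministic Markov process that drifts downward at unit rate while positive, is reflected at the origin, and experiences upward jumps of size $S \sim F$ at the arrival epochs of an independent Poisson process of rate $\lambda$. Standard facts about such PDMPs give, for sufficiently smooth test functions $g$,
\begin{equation*}
A g(w) = -g'(w)\mathbbm{1}_{\{w > 0\}} + \lambda \int_0^\infty (g(w+s) - g(w))\,dF(s).
\end{equation*}
The indicator at $w=0$ captures the reflection: at the empty queue there is no deterministic drift, and the process can only leave the origin through the next Poisson arrival.

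Next I would set $X_t := \varepsilon W_t + \varepsilon y$ and, for an arbitrary smooth $f$, define $\phi(w) := f(\varepsilon w + \varepsilon y)$ so that $f(X_t) = \phi(W_t)$. By the general principle that generators transform covariantly under deterministic state-space maps,
\begin{equation*}
G_{X_t} f(\varepsilon w + \varepsilon y) = A \phi(w).
\end{equation*}
Differentiating, $\phi'(w) = \varepsilon f'(\varepsilon w + \varepsilon y)$, and the jump increment becomes $\phi(w+s) - \phi(w) = f(\varepsilon w + \varepsilon y + \varepsilon s) - f(\varepsilon w + \varepsilon y)$. Substituting these into $A\phi$ and writing $x := \varepsilon w$ (so that $\{w > 0\} = \{x > 0\}$) immediately yields the stated identity.

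The main obstacle is conceptual rather than technical: one must use the correct form of the workload generator, with the indicator $\mathbbm{1}_{\{x>0\}}$ reflecting the boundary behavior at the empty queue, and verify that $f$ lies in a domain where both the drift and the jump integral are well-defined. Under the smoothness and growth conditions on the Stein solutions $f_h$ arising in the rest of the paper, together with finiteness of $\mathbb{E}[S]$, these conditions hold automatically, and the lemma reduces to the substitution above.
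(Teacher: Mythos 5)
Your proof is correct, but it takes a genuinely different route from the paper. You invoke two known facts: (i) the standard PDMP form of the generator of the unscaled workload process $W_t$ in an $M/G/1$ queue, namely $Ag(w) = -g'(w)\mathbbm{1}_{\{w>0\}} + \lambda\int_0^\infty(g(w+s)-g(w))\,dF(s)$, and (ii) the covariant transformation rule for generators under a deterministic, invertible change of state variable, applied to the affine map $w \mapsto \varepsilon w + \varepsilon y$. The substitution is then immediate. By contrast, the paper does not assume the form of $A$ at all; it computes $G_{\varepsilon W_t+\varepsilon y}f$ directly from the definition $\lim_{t\to 0}\tfrac{1}{t}\big(\mathbb{E}f(\varepsilon W_t+\varepsilon y)-f(x+\varepsilon y)\big)$, splitting into the cases $x>0$ and $x=0$, conditioning on the number of Poisson arrivals in $(0,t]$, and using $\lim_{t\to 0}P(G(t)=0)=1$, $\lim_{t\to 0}P(G(t)=1)/t=\lambda$, and $\lim_{t\to 0}P(G(t)=n)/t=0$ for $n\ge 2$; the boundary behavior at $x=0$ is handled by analyzing the arrival time $U\sim\mathrm{Unif}(0,t]$ and letting $t\to 0$. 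In effect the paper re-derives the PDMP generator you cite as a black box. Your argument is shorter and cleanly separates the two ideas at play, but it does lean on the reader accepting the workload generator form and the covariance principle without proof; the paper's argument is longer but self-contained and, importantly, makes the origin of the indicator $\mathbbm{1}_{\{x>0\}}$ fully explicit rather than asserting it as part of the reflection convention.
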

This generator will be the key tool in applying the comparison argument. To control the error terms that arise,
we also require bounds on the derivatives of the Stein equation solution $f_h$ defined in \eqref{steq}. These are summarized in the following lemma. The proof is provided in Section \ref{sec:preliminarylemmaproof}.
\begin{lemma}\label{lemma3}
Let $Z$ be the exponential random variable with mean $1/\mu$ and let $f_h$ be the solution to~\eqref{steq}. Then for any $k\geq 2$,
\begin{align*}
       |f_h^{(k+1)}(x)| \leq \frac{ \|h^{(k)}\|}{\mu} \quad\text{and}\quad |f_h^{(k+2)}(x)| \leq 2\|h^{(k)}\|.
\end{align*}
\end{lemma}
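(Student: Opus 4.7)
The plan is to derive an explicit integral representation for $f_h^{(k+1)}$ directly from the Stein equation and then read off both bounds. The key reduction is to differentiate the Stein equation
\[-\mu f_h'(x) + f_h''(x) + \mu f_h'(0) = h(x) - \mathbb{E}[h(Z)]\]
exactly $k$ times in $x$. Since $\mu f_h'(0)$ is a constant, it disappears under differentiation, and what remains is the first-order linear ODE $u'(x) - \mu u(x) = h^{(k)}(x)$ in the unknown $u(x) := f_h^{(k+1)}(x)$. This reduction gives the two bounds a unified derivation and, importantly, it depends on $h$ only through $h^{(k)}$, which is exactly the quantity that appears in the target estimates.

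Next I would solve this ODE by the integrating factor $e^{-\mu x}$. Integrating from $x$ to $\infty$ yields the one-parameter family
\[u(x) = c_\infty\,e^{\mu x} - e^{\mu x}\int_x^\infty e^{-\mu t}\,h^{(k)}(t)\,dt, \qquad c_\infty := \lim_{y \to \infty} e^{-\mu y} u(y).\]
I would then select the branch $c_\infty = 0$, which is the one produced by the infinitesimal-generator/semigroup representation of the RBM whose stationary law is $\mathrm{Exp}(\mu)$, and also the only one that stays bounded when $h$ (and hence $h^{(k)}$) is bounded. From the closed form $f_h^{(k+1)}(x) = -e^{\mu x}\int_x^\infty e^{-\mu t} h^{(k)}(t)\,dt$, the first claim is immediate: pulling $\|h^{(k)}\|$ out of the integral and evaluating $e^{\mu x}\int_x^\infty e^{-\mu t}\,dt = 1/\mu$ gives $|f_h^{(k+1)}(x)| \leq \|h^{(k)}\|/\mu$.

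For the $(k+2)$nd derivative I would not integrate again, but instead read off $f_h^{(k+2)}(x) = \mu f_h^{(k+1)}(x) + h^{(k)}(x)$ directly from the differentiated Stein equation, and then combine the first bound with the triangle inequality to obtain $|f_h^{(k+2)}(x)| \leq \mu\cdot\|h^{(k)}\|/\mu + \|h^{(k)}\| = 2\|h^{(k)}\|$. The only genuinely delicate step is justifying the choice $c_\infty = 0$, i.e., singling out the correct bounded branch of the ODE; this is standard for the exponential Stein equation but is precisely where the Neumann-at-zero RBM interpretation of $G_Z$ is used. I expect it to be the main, though modest, obstacle, while every other step is a short direct calculation.
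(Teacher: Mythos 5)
Your proof is correct and takes essentially the same route as the paper: differentiate the Stein equation $k$ times to obtain a first-order linear ODE in $u = f_h^{(k+1)}$, solve it via the integrating factor $e^{-\mu x}$ to get the representation $f_h^{(k+1)}(x) = -e^{\mu x}\int_x^\infty e^{-\mu t} h^{(k)}(t)\,dt$, bound it by $\|h^{(k)}\|/\mu$, and then obtain the $(k+2)$-nd bound directly from the differentiated equation. You are slightly more careful than the paper in singling out the $c_\infty = 0$ branch — the paper merely asserts ``a solution'' without explaining why it is \emph{the} solution — so your observation that it is the unique bounded branch (consistent with the semigroup representation of the Stein solution for bounded $h$) is a welcome, and genuinely needed, clarification.
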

We now return to the proof of Theorem~\ref{thm1}. The key step is to expand the generator using Taylor’s theorem to isolate the main terms in~\eqref{mn} and identify the residual error. A straightforward Taylor expansion, however, produces error terms of the form $\varepsilon^i$ for $3 \leq i \leq k$, each involving higher derivatives of the Stein solution $f_h$. 
These terms prevent us from establishing a $k^{th}$ order bound on the Zolotarev distance. To overcome this obstacle, we apply the generator recursively to carefully chosen test functions and exploit the moment-matching assumptions. This recursive structure cancels the troublesome error terms and ensures that the remaining terms can be controlled at the desired order.
 
As noted earlier, the proof of Theorem~\ref{thm1} is presented in two parts, corresponding to the cases $k=2$ and $k \geq 2$ (see Sections~\ref{sec k=2} and~\ref{sec general k}). The general proof involves several technical steps and is somewhat lengthy, so we separate out the case $k=2$ to improve readability. This base case offers a self-contained argument that already illustrates the key ideas in a simpler form. We then build on these insights to establish the result for arbitrary $k$ in Section~\ref{sec general k}.
\subsection{Proof for $k=2$ }\label{sec k=2}
A key ingredient in the proof will be the following analytical lemma, whose proof is presented at the end of this subsection.
\begin{lemma}\label{Lemma k=2}
 For any $f$ with bounded third derivative, there exist constants $b_{2,1},b_{2,2},d_{2,1},d_{2,2}$ with  $3 \leq d_{2,1},d_{2,2} \leq 4$ such that 
    \begin{align*}
        \bigg|\mathbb E[f''(\varepsilon W+\varepsilon y)]-\mu \mathbb E[f'(\varepsilon W+\varepsilon y)]-\varepsilon f''(\varepsilon y)
     +\mu f'(\varepsilon y)\bigg|\leq \varepsilon^2 \|f^{(4)}\| \bigg|\sum_{j=1}^{2}b_{2,j}  \left[ \int_0^\infty s^{d_{2,j}}dF(s)\right]\bigg|,
    \end{align*}
where $F$ denotes the distribution function of $S$.
\end{lemma}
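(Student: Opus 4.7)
The plan is to derive the estimate by invoking the generator equation of Lemma \ref{lemma1} in stationarity twice --- once with test function $f$, and once with test function $f'$ --- and then combining the resulting identities so that the residual higher-derivative terms cancel. Since $\{\varepsilon W_t + \varepsilon y\}$ is stationary with the law of $\varepsilon W + \varepsilon y$, I use $\mathbb{E}[G_{\varepsilon W_t+\varepsilon y} f(\varepsilon W + \varepsilon y)] = 0$. Taylor expanding $f(\varepsilon W + \varepsilon y + \varepsilon s) - f(\varepsilon W + \varepsilon y)$ to order three with an $f^{(4)}$-controlled remainder, integrating against $F$, using $\lambda \mathbb{E}[S] = 1-\varepsilon$ together with the Pollaczek--Khinchine identity $\mathbb{P}(W=0)=\varepsilon$ to rewrite $\mathbb{E}[f'(\varepsilon W + \varepsilon y)\mathbbm{1}_{\{W>0\}}]$, and then substituting the moment matches $\mathbb{E}[S^2]=2/\mu^2$ and $\mathbb{E}[S^3]=6/\mu^3$, I expect to arrive, after rearrangement, at an identity of the form
\begin{align*}
\mathbb{E}[f''(\varepsilon W + \varepsilon y)] - \mu \mathbb{E}[f'(\varepsilon W + \varepsilon y)] - \varepsilon f''(\varepsilon y) + \mu f'(\varepsilon y) = \varepsilon\bigl(\mathbb{E}[f''(\varepsilon W + \varepsilon y)] - f''(\varepsilon y)\bigr) - \tfrac{(1-\varepsilon)\varepsilon}{\mu} \mathbb{E}[f'''(\varepsilon W + \varepsilon y)] + R_1,
\end{align*}
where $R_1$ is a remainder of size $O(\varepsilon^2 \mathbb{E}[S^4]\|f^{(4)}\|)$.

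The obstacle is that the first two terms on the right-hand side are each only $O(\varepsilon)$, not $O(\varepsilon^2)$: indeed $\mathbb{E}[W] = \Theta(1/\varepsilon)$ in heavy traffic, so $\varepsilon\bigl(\mathbb{E}[f''(\varepsilon W + \varepsilon y)] - f''(\varepsilon y)\bigr)$ is only $O(\varepsilon)$, and $\mathbb{E}[f'''(\varepsilon W + \varepsilon y)] = O(1)$ produces the same order for the second term. The fix is to repeat the generator argument with $f$ replaced by $f'$: a Taylor expansion of $f'$ to order two with a third-order $f^{(4)}$-remainder, combined again with $\lambda \mathbb{E}[S]=1-\varepsilon$, $\mathbb{P}(W=0)=\varepsilon$, and the match $\lambda \mathbb{E}[S^2]/2 = (1-\varepsilon)/\mu$, yields the auxiliary identity
\begin{align*}
\mathbb{E}[f''(\varepsilon W + \varepsilon y)] - f''(\varepsilon y) = \tfrac{(1-\varepsilon)}{\mu}\mathbb{E}[f'''(\varepsilon W + \varepsilon y)] + R_2,
\end{align*}
with $R_2$ of size $O(\varepsilon \mathbb{E}[S^3]\|f^{(4)}\|)$. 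Multiplying by $\varepsilon$ and substituting into the previous identity, the two contributions proportional to $\mathbb{E}[f'''(\varepsilon W + \varepsilon y)]$ share the common coefficient $(1-\varepsilon)\varepsilon/\mu$ with opposite signs, and therefore cancel exactly, leaving only $\varepsilon R_2 - \mu R_1$.

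What remains are two residual error terms of sizes $O(\varepsilon^2 \mathbb{E}[S^3]\|f^{(4)}\|)$ and $O(\varepsilon^2 \mathbb{E}[S^4]\|f^{(4)}\|)$. These produce the two constant pairs $(b_{2,1},d_{2,1})$ and $(b_{2,2},d_{2,2})$ in the statement, with $d_{2,1}=3$ and $d_{2,2}=4$, fitting the claimed range $3\le d_{2,j}\le 4$. The main obstacle is to engineer the cancellation cleanly: the moment-matching assumptions must produce \emph{exactly} identical coefficients for $\mathbb{E}[f'''(\varepsilon W+\varepsilon y)]$ in the two identities (after the second is scaled by $\varepsilon$). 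This is precisely where matching the first three moments of $S$ with those of the exponential is essential --- the match $\mathbb{E}[S^3]=6/\mu^3$ reduces the $f'''$-coefficient in the primary identity to $(1-\varepsilon)\varepsilon/\mu$, so that it aligns with its counterpart in the auxiliary identity. This two-step reduction is the base case of the inductive scheme that will be extended to general $k$.
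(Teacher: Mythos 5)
Your proposal is correct and follows essentially the same route as the paper: Taylor-expand the generator applied to $f$, then apply it to $f'$ to express the leftover $\mathbb{E}[f''']$ term through $\mathbb{E}[f'']$ and an $O(\varepsilon)\|f^{(4)}\|$ remainder, and substitute so the two $\mathbb{E}[f''']$ contributions cancel. The paper packages the second application as the generator acting on the scaled function $\tfrac{\frac{1}{6}\lambda\varepsilon^3\mathbb{E}[S^3]}{\frac{1}{2}\lambda\varepsilon^2\mathbb{E}[S^2]}f'$, but that is only an algebraic normalization of the same auxiliary identity you derive; the resulting $(b_{2,j},d_{2,j})$ and the $O(\varepsilon^2)$ error are identical.
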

It is worthwhile to note that only the derivative of order $4$ appear on the RHS and the dependence on $\varepsilon$ is purely of order $\varepsilon^2$. This will be essential in proving Theorem \ref{thm1}. To bound the Zolotarev $k-$distance, we only have control over the $k^{th}$ derivative of the test function $h$. By Lemma \ref{lemma3}, this translates to bounds on the $(k+1)^{th}$ and $(k+2)^{th}$ derivatives of the solution of the Stein equation~\eqref{steq}. Consequently, it is necessary that all error terms be expressed in terms of these derivatives of the Stein solution. We now proceed with the proof of Theorem \ref{thm1} for $k=2$.

\begin{proof}[Proof of Theorem \ref{thm1} for $k=2$]
Recall that our goal is to bound the quantity
\begin{align}\label{bd1}
    \bigg|-\mu \mathbb E[f_h'(\varepsilon W+\varepsilon S)]+ \mathbb E[f_h''(\varepsilon W+\varepsilon S)]+\mu f_h'(0)\bigg|.
\end{align}
We look at each of the terms separately starting with $ \mathbb E[f_h''(\varepsilon W+\varepsilon S)]$
\begin{align}\label{stg19.1}
\nonumber    \mathbb E[f_h''(\varepsilon W &+ \varepsilon S)]\\
\nonumber &=\int_0^\infty \mathbb E[f_h''(\varepsilon W+\varepsilon y)]g(y)dy\\ \nonumber
&=\int_0^\infty \left[\mathbb E[f_h''(\varepsilon W+\varepsilon y)]-\mu \mathbb E[f_h'(\varepsilon W+\varepsilon y)]-\varepsilon f_h''(\varepsilon y)
     +\mu f_h'(\varepsilon y)\ \right] g(y)dy\\ 
   +&\int_0^\infty\mu \mathbb E[f_h'(\varepsilon W+\varepsilon y)]g(y)dy+\int_0^\infty\varepsilon f_h''(\varepsilon y)g(y)dy-\int_0^\infty \mu f_h'(\varepsilon y)g(y)dy .
\end{align}
The first integral will be bounded using Lemma~\ref{Lemma k=2}. Further note that the second integral is  $\mu \mathbb E[f_h'(\varepsilon W+\varepsilon S)]$ which also appears in \eqref{bd1}. Therefore we have to only evaluate the last two integrals. We start by integrating by parts
\begin{align*}
    \int_0^\infty\varepsilon f_h''(\varepsilon y)g(y)dy&=\varepsilon \left[ \frac{f_h'(\varepsilon y)}{\varepsilon}g(y)\bigg|_0^\infty-\int_0^\infty \frac{f_h'(\varepsilon y)}{\varepsilon }g'(y)dy\right]\\
    &=- f_h'(0)g(0)-\int_0^\infty f_h'(\varepsilon y)g'(y)dy.
\end{align*}
Hence the difference of the last two integrals in \eqref{stg19.1} is 
\begin{align}\label{stg20.1}
& \int_0^\infty\varepsilon f_h''(\varepsilon y)g(y)dy-\int_0^\infty \mu f_h'(\varepsilon y)g(y)dy \nonumber \\
&=- f_h'(0)g(0)-\left[\int_0^\infty f_h'(\varepsilon y)g'(y)dy+\int_0^\infty \mu f_h'(\varepsilon y)g(y)dy\right]\nonumber\\
&=-f_h'(0)g(0)-\left[\int_0^\infty f_h'(\varepsilon y)(g'(y)+\mu g(y))dy \right] \nonumber\\
&\stackrel{(a)}{=}-f_h'(0)g(0) 
-\left[\int_0^\infty \left[f_h'(0)+\varepsilon y f_h''(0)+\frac{\varepsilon^2y^2}{2}f_h^{(3)}(\xi)\right](g'(y)+\mu g(y))dy \right] \nonumber\\
&=-f_h'(0)g(0)-\int_0^\infty \frac{\varepsilon^2 y^2}{2}f_h^{(3)}(\xi)(g'(y)+\mu g(y))dy -\int_0^\infty f_h'(0)(g'(y)+\mu g(y))dy \nonumber\\
&-\int_0^\infty \varepsilon yf_h''(0)(g'(y)+\mu g(y))dy ,
\end{align}
where in $(a)$ we have Taylor expanded $f$ around 0. We look at each of the last two integrals in \eqref{stg20.1} separately. The first one,
\begin{align}\label{stg21.1}
    \int_0^\infty f_h'(0)(g'(y)+\mu g(y))dy&= f_h'(0) \int_0^{\infty}g'(y)+\mu f_h'(0) \int_0^{\infty}g(y)dy  
    =-f_h'(0)g(0)+\mu f_h'(0),
\end{align}
since $g$ is a pdf with non-negative support and  $\lim_{y \to \infty}g(y)=0$.
And the second one
\begin{align}\label{stg22.1}
    \int_0^\infty \varepsilon yf_h''(0)(g'(y)+\mu g(y))dy 
    &\stackrel{(a)}{=}\varepsilon f_h''(0)\left[yg(y)\bigg|_0^\infty-\int_0^\infty  g(y)dy \right]+\varepsilon f_h''(0)\mu \int_0^\infty yg(y)dy \nonumber \\
    &\stackrel{(b)}{=}-\varepsilon f_h''(0)+\varepsilon f_h''(0)\mu \frac{1}{\mu} 
    =0,
\end{align}
where $(a)$ follows using integration by parts and in $(b)$ we have used the Assumption $\mathbb E[S]=\mathbb E[Z]=\frac{1}{\mu}$. Putting \eqref{stg21.1} and \eqref{stg22.1} in \eqref{stg20.1} finally gives the required terms:
\begin{align}\label{stg21.12}
   & \int_0^\infty\varepsilon f_h''(\varepsilon y)g(y)dy-\int_0^\infty \mu f_h'(\varepsilon y)g(y)dy \nonumber\\
&=-\mu f_h'(0) -\int_0^\infty \frac{\varepsilon^2 y^2}{2}f_h^{(3)}(\xi)(g'(y)+\mu g(y))dy .
\end{align}
Putting \eqref{stg21.12} in \eqref{stg19.1},
\begin{align*}
    \mathbb E[f_h''(\varepsilon W+\varepsilon S)]&=\mu \mathbb E[f_h'(\varepsilon W+\varepsilon S)]-\mu f_h'(0) -\int_0^\infty \frac{\varepsilon^2 y^2}{2}f_h^{(3)}(\xi)(g'(y)+\mu g(y))dy \\
  &+\int_0^\infty \left[E[f_h''(\varepsilon W+\varepsilon y)]-\mu E[f_h'(\varepsilon W+\varepsilon y)]-\varepsilon f_h''(\varepsilon y)
     +\mu f_h'(\varepsilon y)\ \right] g(y)dy.
\end{align*}
Therefore,
\begin{align*}
    &\bigg|-\mu \mathbb E[f_h'(\varepsilon W+\varepsilon S)]+ \mathbb E[f_h''(\varepsilon W+\varepsilon S)]+\mu f_h'(0)\bigg|\\
    &\leq \int_0^\infty \bigg|\frac{\varepsilon^2y^2}{2}f_h^{(3)}(\xi) (g'(y)+\mu g(y)) \bigg|dy\\
    &+\int_0^\infty\bigg|\mathbb E[f_h''(\varepsilon W+\varepsilon y)]-\mu E[f_h'(\varepsilon W+\varepsilon y)]-\varepsilon f_h''(\varepsilon y)
     +\mu f_h'(\varepsilon y)\bigg| g(y)dy\\
    &\stackrel{(a)} {\leq} \|f_h^{(3)}\|\frac{\varepsilon^2}{2}\int_0^\infty|y^2(g'(y)+\mu g(y))|dy+\|f_h^{(4)}\| \varepsilon^2\int_0^\infty \bigg|\left( \sum_{j=1}^{2}b_{2,j}  \left[ \int_0^\infty s^{d_{2,j}} dF(s)\right]\right) g(y) \bigg|dy\\
    &\leq C \varepsilon^2,
\end{align*}
where $C$ is a constant given by 
\begin{align*}
    C=\frac{1}{2\mu }C_1 +2  \left( \sum_{j=1}^{2}b_{2,j}   \left[ \int_0^\infty s^{d_{2,j}} dF(s)\right]\right) 
\end{align*}
with constants $b_{2,1},b_{2,2},d_{2,1},d_{2,2}$ with $3 \leq ,d_{2,1},d_{2,2} \leq 4$.  Here, $(a)$ follows from the Lemma \ref{Lemma k=2} and the last step follows from Lemma \ref{lemma3} as we have $\|h^{(2)}\| \leq 1 $ and therefore $\|f_h^{(3)}\| \leq \frac{1}{\mu}$ and $\|f_h^{(4)}\| \leq 2$. Therefore, from \eqref{mn} we get
\begin{align*}
    d_{Zol,2}(\varepsilon(W+S),Z)\leq C\varepsilon^2.
\end{align*}
\end{proof}

\begin{proof}[Proof of Lemma~\ref{Lemma k=2}]
 The proof involves the following key steps;
\begin{enumerate}
    \item First we Taylor expand the generator in Lemma \ref{lemma1} up to three terms.
    \item Next, we apply the generator to a carefully chosen function to write the third derivative in terms of the second and fourth derivatives. 
    This expression of the the third derivative then can be used in the Taylor expansion in Step 1  to get the induction hypothesis which involves only the first, second and fourth derivatives. We give the details next.
\end{enumerate}
From Lemma \ref{lemma1} we have 
\begin{align*}
     G_{\varepsilon W_t+\varepsilon y}f(x+\varepsilon y)
     &= \lambda \int_0^\infty (f(x+\varepsilon y +\varepsilon s)-f(x+\varepsilon y))dF(s)-\varepsilon f'(x+\varepsilon y)\mathbbm{1}_{\{x>0\}}\\
     &= \lambda \mathbb E[f(x+\varepsilon S+\varepsilon y)-f(x+\varepsilon y)]-\varepsilon f'(x+\varepsilon y)\mathbbm{1}_{\{x>0\}}
\end{align*}
\textbf{Step 1:} Taylor expanding $f$ around $x+\varepsilon y$ up to three terms,
\begin{align} \label{eqcomb1}
       &G_{\varepsilon W_t+\varepsilon y}f(x+\varepsilon y) \nonumber \\ 
       &=\lambda \varepsilon \mathbb E[S]f'(x+\varepsilon y)+\frac{1}{2}\lambda \varepsilon^2 \mathbb E[S^2]f''(x+\varepsilon y)\nonumber \\
     &+\frac{1}{6}\lambda \varepsilon^3 \mathbb E[S^3]f'''(x+\varepsilon y)-\varepsilon f'(x+\varepsilon y)+\varepsilon f'(x+\varepsilon y)\mathbbm{1}_{\{x=0\}}+\varepsilon f'(x+\varepsilon y)\mathbbm{1}_{\{x<0\}}\nonumber \\
     &+\frac{1}{24}\lambda \varepsilon^4 \int_0^\infty s^4 f^{(4)}(\eta)dF(s)\nonumber \\
     &\geq \lambda \varepsilon \mathbb E[S]f'(x+\varepsilon y)+\frac{1}{2}\lambda \varepsilon^2 \mathbb E[S^2]f''(x+\varepsilon y)\nonumber \\
     &+\frac{1}{6}\lambda \varepsilon^3 \mathbb E[S^3]f'''(x+\varepsilon y)-\varepsilon f'(x+\varepsilon y)+\varepsilon f'(x+\varepsilon y)\mathbbm{1}_{\{x=0\}}+\varepsilon f'(x+\varepsilon y)\mathbbm{1}_{\{x<0\}}\nonumber \\
     &-\frac{1}{24}\lambda \varepsilon^4  \|f^{(4)}\|\int_0^\infty s^4 dF(s).
\end{align}
Similarly,
\begin{align}\label{eqcomb2}
    G_{\varepsilon W_t+\varepsilon y}f(x+\varepsilon y)
      &\leq \lambda \varepsilon \mathbb E[S]f'(x+\varepsilon y)+\frac{1}{2}\lambda \varepsilon^2 \mathbb E[S^2]f''(x+\varepsilon y) \nonumber \\
     &+\frac{1}{6}\lambda \varepsilon^3 \mathbb E[S^3]f'''(x+\varepsilon y)-\varepsilon f'(x+\varepsilon y)+\varepsilon f'(x+\varepsilon y)\mathbbm{1}_{\{x=0\}}+\varepsilon f'(x+\varepsilon y)\mathbbm{1}_{\{x<0\}}\nonumber \\
     &+\frac{1}{24}\lambda \varepsilon^4  \|f^{(4)}\|\int_0^\infty s^4 dF(s).
\end{align}
For convenience we write \eqref{eqcomb1} and $\eqref{eqcomb2}$ together as 
\begin{align}\label{eqcomb2.1}
       G_{\varepsilon W_t+\varepsilon y}f(x+\varepsilon y) 
     &\stackrel{(\leq)}{\geq} \lambda \varepsilon \mathbb E[S]f'(x+\varepsilon y)+\frac{1}{2}\lambda \varepsilon^2 \mathbb E[S^2]f''(x+\varepsilon y)\nonumber \\
     &+\frac{1}{6}\lambda \varepsilon^3 \mathbb E[S^3]f'''(x+\varepsilon y)-\varepsilon f'(x+\varepsilon y)+\varepsilon f'(x+\varepsilon y)\mathbbm{1}_{\{x=0\}}+\varepsilon f'(x+\varepsilon y)\mathbbm{1}_{\{x<0\}}\nonumber \\
     &\stackrel{(+)}{-}\frac{1}{24}\lambda \varepsilon^4  \|f^{(4)}\|\int_0^\infty s^4 dF(s).
\end{align}
where we have used the notation $a \stackrel{(\leq)}{\geq} b \stackrel{(+)}{-} c$, to combine the two inequalities $a \leq b+c$ and $a \geq b-c$.

Next, we want to use $ \mathbb E[G_{\varepsilon W_t+\varepsilon y}f(\varepsilon W+\varepsilon y) ]=0$, for the steady state waiting time $W$. We first show that $ \mathbb E[|G_{\varepsilon W_t+\varepsilon y}f(\varepsilon W+\varepsilon y)| ] < \infty$. We have 
\begin{align*}
    \bigg|\int_0^\infty (f(u+\epsilon s)-f(u)) dF(s) \bigg| \leq \sum_{j=1}^2 \frac{\epsilon^j \mathbb E[S^j]}{j!}|f^{(j)}(u)|+\frac{\epsilon^3\mathbb E[S^3]}{3!}\|f^{(3)}\|.
\end{align*}
And since
\begin{align*}
    f''(u)=f''(0)+\int_{0}^{u}f^{(3)}(t)dt,
\end{align*}
we have 
\begin{align*}
    |f''(u)| \leq |f''(0)|+\int_{0}^{|u|}|f^{(3)}(t)|dt \leq |f''(0)| +\|f^{(3)}\||u| 
\end{align*}
and similarly
\begin{align}\label{genbound1}
    |f'(u)| \leq |f'(0)|+\int_0^{|u|} (|f''(0)| +\|f^{(3)}\|t)dt=|f'(0)|+|f''(0)||u|+\frac{\|f^{(3)}\|}{2}u^2.
\end{align}
That is,
\begin{align}\label{geenbound2}
    \mathbb E[|f''(\varepsilon W+\varepsilon y)|] \leq  |f''(0)| +\|f^{(3)}\| \epsilon (\mathbb E[W]+y)< \infty,
\end{align}
and 
\begin{align*}
    \mathbb E[|f'(\varepsilon W+\varepsilon y)|] \leq |f'(0)|+|f''(0)|\epsilon(\mathbb E[W]+y)+\frac{\|f^{(3)}\|}{2}\epsilon^2(\mathbb E[W^2]+2y \mathbb E[W]+y^2)< \infty,
\end{align*}
where the inequalities in \eqref{genbound1}-\eqref{geenbound2} follows from \eqref{wtrecursion} and bounded third derivative.
Hence we have 
\begin{align}\label{genbound3}
    \mathbb E[|G_{\varepsilon W_t+\varepsilon y}f(\varepsilon W+\varepsilon y)| ] \leq \lambda \bigg[ \sum_{j=1}^2 \frac{\epsilon^j \mathbb E[S^j]}{j!}\mathbb E[|f^{(j)}(\varepsilon W+\varepsilon y)|]+\frac{\epsilon^3\mathbb E[S^3]}{3!}\|f^{(3)}\|\bigg] +\epsilon \mathbb E[  |f'(\varepsilon W+\varepsilon y)|]< \infty.
\end{align}

Therefore for the steady state waiting time $W$ 
\begin{align*}
    \mathbb E[G_{\varepsilon W_t+\varepsilon y}f(\varepsilon W+\varepsilon y) ]=0,
\end{align*}
we have using $P(W=0)=\varepsilon$ and  $P(W<0)=0$ in \eqref{eqcomb2.1},
\begin{align}\label{stg23}
    0 &\stackrel{(\leq)} {\geq} \lambda \varepsilon \mathbb E[S]\mathbb E[f'(\varepsilon W+\varepsilon y)]+\frac{1}{2}\lambda \varepsilon^2 \mathbb E[S^2]\mathbb E[f''(\varepsilon W+\varepsilon y)] \nonumber\\
    &+\frac{1}{6}\lambda \varepsilon^3 \mathbb E[S^3]\mathbb E[f'''(\varepsilon W+\varepsilon y)]-\varepsilon \mathbb E[f'(\varepsilon W+\varepsilon y)]+\varepsilon^2 f'(\varepsilon y)\stackrel{(+)}{-}\frac{1}{24}\lambda \varepsilon^4 \|f^{(4)}\| \int_0^\infty s^4 dF(s) \nonumber\\
    &=\frac{\lambda}{\mu}\varepsilon \mathbb E[f'(\varepsilon W+\varepsilon y)]+\frac{1}{2}\lambda \varepsilon^2 \frac{2}{\mu^2}E[f''(\varepsilon W+\varepsilon y)] \nonumber\\
    &+\frac{1}{6}\lambda \varepsilon^3 \frac{6}{\mu^3}\mathbb E[f'''(\varepsilon W+\varepsilon y)]-\varepsilon E[f'(\varepsilon W+\varepsilon y)]+\varepsilon^2f'(\varepsilon y)\stackrel{(+)}{-}\frac{1}{24}\lambda \varepsilon^4 \|f^{(4)}\| \int_0^\infty s^4 dF(s) \nonumber\\
    &=\varepsilon E[f'(\varepsilon W+\varepsilon y)]\left(\frac{\lambda}{\mu}-1\right)+\frac{\lambda}{\mu}\frac{\varepsilon^2}{\mu}E[f''(\varepsilon W+\varepsilon y)] \nonumber\\
    &+\frac{\lambda \varepsilon^3}{\mu^3}\mathbb E[f'''(\varepsilon W+\varepsilon y)]+\varepsilon^2f'(\varepsilon y)\stackrel{(+)}{-}\frac{1}{24}\lambda \varepsilon^4 \|f^{(4)}\| \int_0^\infty s^4dF(s).
 \end{align}
\textbf{Step 2:} We now derive the third derivative in terms of the second and fourth derivative. 
\begin{claim}
    \begin{align*}
        \frac{\lambda \varepsilon^3}{\mu^3}\mathbb E[f'''(\varepsilon W+\varepsilon y)] \stackrel{(\leq)} {\geq} \frac{\varepsilon^3}{\mu}\mathbb E[f''(\varepsilon W+\varepsilon y)]-\frac{\varepsilon^3}{\mu}f''(\varepsilon y)\stackrel{(+)}{-}\frac{1}{6} \lambda \varepsilon^4 \frac{1}{\mu} \|f^{(4)}\|\int_0^\infty s^3(\eta )dF(s).
    \end{align*}
\end{claim}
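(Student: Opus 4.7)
The statement being claimed says (in the combined inequality notation) that
$$\Bigl|\tfrac{\lambda \varepsilon^3}{\mu^3}\mathbb E[f'''(\varepsilon W+\varepsilon y)]-\tfrac{\varepsilon^3}{\mu}\mathbb E[f''(\varepsilon W+\varepsilon y)]+\tfrac{\varepsilon^3}{\mu}f''(\varepsilon y)\Bigr| \;\leq\; \tfrac{\lambda \varepsilon^4}{6\mu}\|f^{(4)}\|\int_0^\infty s^3\,dF(s),$$
so the goal is to produce an identity relating $\mathbb E[f''']$ to $\mathbb E[f'']$ with a remainder that depends only on $f^{(4)}$. The natural way to generate $f'''$ from the generator in Lemma~\ref{lemma1} is to apply it to $f'$ rather than $f$, which shifts all derivative orders up by one.

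\textbf{Step 1: Apply the generator to $f'$ and Taylor expand.} By Lemma~\ref{lemma1},
$$G_{\varepsilon W_t+\varepsilon y}f'(x+\varepsilon y)=\lambda\int_0^\infty\!\bigl(f'(x+\varepsilon y+\varepsilon s)-f'(x+\varepsilon y)\bigr)\,dF(s)-\varepsilon f''(x+\varepsilon y)\mathbbm{1}_{\{x>0\}}.$$
I would Taylor expand $f'(x+\varepsilon y+\varepsilon s)$ around $x+\varepsilon y$ through the second order term, with an integral remainder of order $\varepsilon^3 s^3 f^{(4)}(\eta)/6$. Integrating against $dF(s)$ produces $\varepsilon\,\mathbb E[S]f''$, $\tfrac{\varepsilon^2\mathbb E[S^2]}{2}f'''$, and a remainder bounded in absolute value by $\tfrac{\varepsilon^3}{6}\|f^{(4)}\|\int s^3\,dF(s)$, exactly matching the form of~\eqref{eqcomb1}--\eqref{eqcomb2} but one derivative higher.

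\textbf{Step 2: Stationarity, moment matching, and the boundary term.} I would then take expectation against the stationary law of $W$. Integrability of $G f'(\varepsilon W+\varepsilon y)$ follows exactly as in~\eqref{genbound1}--\eqref{genbound3}, because $f'$ has a bounded third derivative (namely $f^{(4)}$). Hence $\mathbb E[G f'(\varepsilon W+\varepsilon y)]=0$, yielding
$$0 \stackrel{(\leq)}{\geq}\lambda \varepsilon\mathbb E[S]\,\mathbb E[f'']+\tfrac{\lambda \varepsilon^2 \mathbb E[S^2]}{2}\,\mathbb E[f''']-\varepsilon \mathbb E[f''\mathbbm{1}_{\{W>0\}}]\stackrel{(+)}{-}\tfrac{\lambda \varepsilon^3}{6}\|f^{(4)}\|\!\int_0^\infty\! s^3 dF(s).$$
Now plug in the moment-matching values $\mathbb E[S]=1/\mu$ and $\mathbb E[S^2]=2/\mu^2$, together with $P(W=0)=\varepsilon$, which gives $\mathbb E[f''(\varepsilon W+\varepsilon y)\mathbbm{1}_{\{W>0\}}]=\mathbb E[f'']-\varepsilon f''(\varepsilon y)$.

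\textbf{Step 3: Simplify using $\lambda/\mu=1-\varepsilon$ and rescale.} The surviving $f''$ terms combine via $\tfrac{\lambda \varepsilon}{\mu}\mathbb E[f''] - \varepsilon \mathbb E[f''] = -\varepsilon^2 \mathbb E[f'']$, so after rearranging I would obtain
$$\tfrac{\lambda\varepsilon^2}{\mu^2}\mathbb E[f'''] \stackrel{(\leq)}{\geq}\varepsilon^2\mathbb E[f'']-\varepsilon^2 f''(\varepsilon y)\stackrel{(+)}{-}\tfrac{\lambda\varepsilon^3}{6}\|f^{(4)}\|\int_0^\infty s^3\,dF(s).$$
Multiplying both sides by $\varepsilon/\mu$ yields exactly the claim. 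There is no real technical obstacle here since the generator identity, integrability, and Taylor expansion all mirror the argument used in Step~1 of the proof of Lemma~\ref{Lemma k=2}; the only care needed is bookkeeping the cancellation of the $\mathbb E[f'']$ term through the identity $\lambda/\mu=1-\varepsilon$ and correctly accounting for the boundary mass at $W=0$, which supplies the $f''(\varepsilon y)$ term on the right.
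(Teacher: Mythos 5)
Your proof is correct and follows essentially the same path as the paper: both apply the generator (or equivalently stationarity, $\mathbb E[Gf'(\varepsilon W+\varepsilon y)]=0$) to $f'$, Taylor expand to second order with a third-order remainder, use $P(W=0)=\varepsilon$ and the first two matched moments of $S$, and combine the $f''$ terms via $\lambda/\mu = 1-\varepsilon$. The only cosmetic difference is that the paper bakes the scaling factor $\frac{\frac{1}{6}\lambda\varepsilon^3\mathbb E[S^3]}{\frac{1}{2}\lambda\varepsilon^2\mathbb E[S^2]}=\frac{\varepsilon}{\mu}$ into the test function from the start, whereas you apply the generator to $f'$ directly and multiply by $\varepsilon/\mu$ at the end, which is the same computation by linearity of the generator.
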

\begin{proof}
Again From the generator in Lemma \ref{lemma1} we have
\begin{align}\label{e2.1}
    G_{\varepsilon W_t+\varepsilon y}f(x+\varepsilon y)&=\lambda \int_0^\infty (f(x+\varepsilon y +\varepsilon s)-f(x+\varepsilon y))dF(s)-\varepsilon f'(x+\varepsilon y)\mathbbm{1}_{\{x>0\}}\\
  \nonumber   &=\lambda \varepsilon \mathbb E[S]f'(x+\varepsilon y)+\frac{1}{2}\lambda \varepsilon^2 \mathbb E[S^2]f''(x+\varepsilon y)-\varepsilon f'(x+\varepsilon y)+\varepsilon f'(x+\varepsilon y)\mathbbm{1}_{\{x=0\}} \nonumber \\
  &+\varepsilon f'(x+\varepsilon y)\mathbbm{1}_{\{x<0\}}
  +\frac{1}{6}\lambda \varepsilon^3 \int_0^\infty s^3 f'''(\eta)dF(s),
\end{align}
where we Taylor expanded $f$ around $x+\varepsilon y$. Therefore,
\begin{align*}
  & G_{\varepsilon W+\varepsilon y}\frac{\frac{1}{6}\lambda \varepsilon^3 \mathbb E[S^3]}{\frac{1}{2}\lambda \varepsilon^2 \mathbb E[S^2]}f'(x+\varepsilon y)\\
  &=\lambda \varepsilon \mathbb E[S].\frac{\frac{1}{6}\lambda \varepsilon^3 \mathbb E[S^3]}{\frac{1}{2}\lambda \varepsilon^2 \mathbb E[S^2]}f''(x+\varepsilon y)+\frac{1}{2}\lambda \varepsilon^2 \mathbb E[S^2].\frac{\frac{1}{6}\lambda \varepsilon^3 \mathbb E[S^3]}{\frac{1}{2}\lambda \varepsilon^2 \mathbb E[S^2]}f'''(x+\varepsilon y)\\
    &-\varepsilon.\frac{\frac{1}{6}\lambda \varepsilon^3 \mathbb E[S^3]}{\frac{1}{2}\lambda \varepsilon^2 \mathbb E[S^2]} f''(x+\varepsilon y)+\varepsilon \frac{\frac{1}{6}\lambda \varepsilon^3 \mathbb E[S^3]}{\frac{1}{2}\lambda \varepsilon^2 \mathbb E[S^2]}f''(x+\varepsilon y)\mathbbm{1}_{\{x=0\}}+\varepsilon \frac{\frac{1}{6}\lambda \varepsilon^3 \mathbb E[S^3]}{\frac{1}{2}\lambda \varepsilon^2 \mathbb E[S^2]}f''(x+\varepsilon y)\mathbbm{1}_{\{x<0\}}\\
    &+\frac{1}{6} \lambda \varepsilon^3 \frac{\frac{1}{6}\lambda \varepsilon^3 \mathbb E[S^3]}{\frac{1}{2}\lambda \varepsilon^2 \mathbb E[S^2]} \int_0^\infty s^3 f^{(4)}(\eta )dF(s)\\
    &\stackrel{(a)}{=}\varepsilon(\rho-1 ).\frac{\frac{1}{6}\lambda \varepsilon^3 \mathbb E[S^3]}{\frac{1}{2}\lambda \varepsilon^2 \mathbb E[S^2]}f''(x+\varepsilon y)+\frac{1}{6}\lambda \varepsilon^3 \mathbb E[S^3]f'''(x+\varepsilon y)+\varepsilon \frac{\frac{1}{6}\lambda \varepsilon^3 \mathbb E[S^3]}{\frac{1}{2}\lambda \varepsilon^2 \mathbb E[S^2]}  f''(x+\varepsilon y)\mathbbm{1}_{\{x=0\}}\\
    &+\varepsilon \frac{\frac{1}{6}\lambda \varepsilon^3 \mathbb E[S^3]}{\frac{1}{2}\lambda \varepsilon^2 \mathbb E[S^2]}  f''(x+\varepsilon y)\mathbbm{1}_{\{x<0\}}+\frac{1}{18} \lambda \varepsilon^4 \frac{\mathbb E[S^3]}{\mathbb E[S^2]}\int_0^\infty s^3 f^{(4)}(\eta )dF(s)\\
    &\stackrel{(b)}{=}-\varepsilon^2\frac{\frac{1}{6}\lambda \varepsilon^3 \mathbb E[S^3]}{\frac{1}{2}\lambda \varepsilon^2 \mathbb E[S^2]}f''(x+\varepsilon y)+\frac{1}{6}\lambda \varepsilon^3 \mathbb E[S^3]f'''(x+\varepsilon y)+\varepsilon \frac{\frac{1}{6}\lambda \varepsilon^3 \mathbb E[S^3]}{\frac{1}{2}\lambda \varepsilon^2 \mathbb E[S^2]} f''(x+\varepsilon y)\mathbbm{1}_{\{x=0\}}\\
    &+\varepsilon \frac{\frac{1}{6}\lambda \varepsilon^3 \mathbb E[S^3]}{\frac{1}{2}\lambda \varepsilon^2 \mathbb E[S^2]} f''(x+\varepsilon y)\mathbbm{1}_{\{x<0\}}+\frac{1}{18} \lambda \varepsilon^4 \frac{\mathbb E[S^3]}{\mathbb E[S^2]}\int_0^\infty s^3 f^{(4)}(\eta )dF(s)\\
   &\stackrel{(\geq)} {\leq} -\varepsilon^2\frac{\frac{1}{6}\lambda \varepsilon^3 \mathbb E[S^3]}{\frac{1}{2}\lambda \varepsilon^2 \mathbb E[S^2]}f''(x+\varepsilon y)+\frac{1}{6}\lambda \varepsilon^3 \mathbb E[S^3]f'''(x+\varepsilon y)+\varepsilon \frac{\frac{1}{6}\lambda \varepsilon^3 \mathbb E[S^3]}{\frac{1}{2}\lambda \varepsilon^2 \mathbb E[S^2]} f''(x+\varepsilon y)\mathbbm{1}_{\{x=0\}}\\
    &+\varepsilon \frac{\frac{1}{6}\lambda \varepsilon^3 \mathbb E[S^3]}{\frac{1}{2}\lambda \varepsilon^2 \mathbb E[S^2]} f''(x+\varepsilon y)\mathbbm{1}_{\{x<0\}}\stackrel{(-)}{+}\frac{1}{18} \lambda \varepsilon^4 \frac{\mathbb E[S^3]}{\mathbb E[S^2]} \|f^{(4)}\|\int_0^\infty s^3 dF(s),
\end{align*}
where we have used $\lambda \mathbb E[S]=\rho$ in $(a)$ and $1-\rho=\varepsilon$ in $(b)$. Since $E\left[| G_{\varepsilon W+\varepsilon y}f'(\varepsilon W+\varepsilon y)|\right]< \infty$, using the same arguments we used in \eqref{genbound3}, we have from the property of generators
\begin{align*}
   \mathbb E\left[ G_{\varepsilon W+\varepsilon y}\frac{\frac{1}{6}\lambda \varepsilon^3 \mathbb E[S^3]}{\frac{1}{2}\lambda \varepsilon^2 \mathbb E[S^2]}f'(\varepsilon W+\varepsilon y)\right]=0.
\end{align*}
Hence we have
\begin{align*}
    &\frac{1}{6}\lambda \varepsilon^3 \mathbb E[S^3]\mathbb E[f'''(\varepsilon W+\varepsilon y)]\\ &\stackrel{(\leq)} {\geq} \varepsilon^2\frac{\frac{1}{6}\lambda \varepsilon^3 \mathbb E[S^3]}{\frac{1}{2}\lambda \varepsilon^2 \mathbb E[S^2]}\mathbb E[f''(\varepsilon W+\varepsilon y)]-\varepsilon^2 \frac{\frac{1}{6}\lambda \varepsilon^3 \mathbb E[S^3]}{\frac{1}{2}\lambda \varepsilon^2 \mathbb E[S^2]} f''(\varepsilon y)       \stackrel{(+)}{-}\frac{1}{18} \lambda \varepsilon^4 \frac{\mathbb E[S^3]}{\mathbb E[S^2]} \|f^{(4)}\|\int_0^\infty s^3 dF(s)\\
        =  &\varepsilon^2\frac{\frac{1}{6}\lambda \varepsilon^3 \mathbb E[S^3]}{\frac{1}{2}\lambda \varepsilon^2 \mathbb E[S^2]}\mathbb E[f''(\varepsilon W+\varepsilon y)]-\varepsilon^3\frac{\frac{1}{6}\mathbb E[S^3]}{\frac{1}{2} \mathbb E[S^2]} f''(\varepsilon y)
        \stackrel{(+)}{-}\frac{1}{18} \lambda \varepsilon^4 \frac{\mathbb E[S^3]}{\mathbb E[S^2]} \|f^{(4)}\|\int_0^\infty s^3 dF(s),
\end{align*}
where in the first inequality we used $P(W=0)=\varepsilon$. Therefore,
\begin{align}\label{stg2}
    \frac{\lambda \varepsilon^3}{\mu^3}\mathbb E[f'''(\varepsilon W+\varepsilon y)] \stackrel{(\leq)} {\geq} \frac{\varepsilon^3}{\mu}\mathbb E[f''(\varepsilon W+\varepsilon y)]-\frac{\varepsilon^3}{\mu}f''(\varepsilon y)\stackrel{(+)}{-}\frac{1}{6} \lambda \varepsilon^4 \frac{1}{\mu} \|f^{(4)}\|\int_0^\infty s^3 dF(s).
\end{align}
\end{proof}
We now use the expression for the third derivative in \eqref{stg2} in \eqref{stg23} to get the result and the constants $b_{2,1}, b_{2,2}, d_{2,1},d_{2,2}$.
From \eqref{stg23} we have
\begin{align*}
    0
   &\stackrel{(\leq)} {\geq} \varepsilon E[f'(\varepsilon W+\varepsilon y)]\left(\frac{\lambda}{\mu}-1\right)+\frac{\lambda}{\mu}\frac{\varepsilon^2}{\mu}E[f''(\varepsilon W+\varepsilon y)]\\
    &+\frac{\lambda \varepsilon^3}{\mu^3}\mathbb E[f'''(\varepsilon W+\varepsilon y)]+\varepsilon^2f'(\varepsilon y)\stackrel{(+)}{-}\frac{1}{24}\lambda \varepsilon^4 \|f^{(4)}\| \int_0^\infty s^4 dF(s)\\
   & \mathrel{\substack{(a)\\ \leq \\ \geq}}-\varepsilon^2 E[f'(\varepsilon W+\varepsilon y)]+\frac{\rho}{\mu}\varepsilon^2E[f''(\varepsilon W+\varepsilon y)]+\frac{\varepsilon^3}{\mu}\mathbb E[f''(\varepsilon W+\varepsilon y)]-\frac{\varepsilon^3}{\mu}f''(\varepsilon y)+\varepsilon^2f'(\varepsilon y)\\
    &\stackrel{(+)}{-}\frac{1}{6} \lambda \varepsilon^4 \frac{1}{\mu}\|f^{(4)}\|\int_0^\infty s^3 dF(s)\stackrel{(+)}{-}\frac{1}{24}\lambda \varepsilon^4 \|f^{(4)}\|\int_0^\infty s^4dF(s)\\
    &=-E[f'(\varepsilon W+\varepsilon y)]+\frac{1}{\mu}(\rho+\varepsilon)E[f''(\varepsilon W+\varepsilon y)]-\frac{\varepsilon}{\mu}f''(\varepsilon y)+f'(\varepsilon y)\\
     &\stackrel{(+)}{-}\frac{1}{6} \lambda \varepsilon^2 \frac{1}{\mu}\|f^{(4)}\|\int_0^\infty s^3 dF(s)
     \stackrel{(+)}{-}\frac{1}{24}\lambda \varepsilon^2 \int_0^\infty s^4 f^{(4)}(\eta)dF(s),
\end{align*}
where in ($a$) we have used \eqref{stg2}. Therefore,
\begin{align}\label{stg3}
   \nonumber  &-\frac{1}{6} \lambda \varepsilon^2 \|f^{(4)}\|\int_0^\infty s^3 dF(s)-\frac{1}{24}\mu \lambda \varepsilon^2 \|f^{(4)}\| \int_0^\infty s^4 dF(s) \nonumber \\
   &\leq E[f''(\varepsilon W+\varepsilon y)]-\mu E[f'(\varepsilon W+\varepsilon y)]-\varepsilon f''(\varepsilon y)+\mu f'(\varepsilon y) \nonumber\\
     &\leq\frac{1}{6} \lambda \varepsilon^2 \|f^{(4)}\|\int_0^\infty s^3 dF(s)+\frac{1}{24}\mu \lambda \varepsilon^2 \|f^{(4)}\| \int_0^\infty s^4 dF(s).
\end{align}
Hence we have the lemma with $b_{2,1}=\frac{1}{6}\lambda, \ d_{2,1}=3,\ b_{2,2}=\frac{1}{24}\mu \lambda \text{ and }\  d_{2,2}=4$. 

\end{proof}
\subsection{Proof for general $k$}\label{sec general k}

The proof of Theorem \ref{thm1} follows similar to the 
special case $k=2$ once we state a general version of the Lemma \ref{Lemma k=2}. Recall that Lemma \ref{Lemma k=2} gave a bound on $ \bigg|E[f''(\varepsilon W+\varepsilon y)]-\mu E[f'(\varepsilon W+\varepsilon y)]-\varepsilon f''(\varepsilon y)
     +\mu f'(\varepsilon y)\bigg|$ in terms of $\varepsilon^2\|f^{(4)}\| $. For proving the general case, 
     we need to bound the same quantity using $\varepsilon^k \|f^{(k+2)}\| $ as given by the following lemma whose proof is presented in Appendix~\ref{secB1}.
\begin{lemma}\label{lemma general k}
    Let $k \in \mathbb N$. For any smooth $f$ there exists $b_{k,1},b_{k,2}, \ldots, b_{k,2^{k-1}}$ and $d_{k,1},d_{k,2}, \ldots, d_{k,2^{k-1}}$ with $b_{k,j}$ and $d_{k,j}$ constants and $3 \leq d_{k,1},d_{k,2}, \ldots, d_{k,2^{k-1}} \leq k+2$ such that 
    \begin{align*}
        \bigg|\mathbb E[f''(\varepsilon W+\varepsilon y)]-\mu \mathbb E[f'(\varepsilon W+\varepsilon y)]-\varepsilon f''(\varepsilon y)
     +\mu f'(\varepsilon y)\bigg|\leq \varepsilon^k \|f^{(k+2)}\| \bigg|\sum_{j=1}^{2^{k-1}}b_{k,j}  \left[ \int_0^\infty s^{d_{k,j}}dF(s)\right]\bigg|,
    \end{align*}
where $F$ denotes the distribution function of $S$.
\end{lemma}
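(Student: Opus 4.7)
The plan is to mirror the two-step structure used in the proof of Lemma~\ref{Lemma k=2}, but iterate the reduction $k-1$ times so that all derivatives of order $3$ through $k+1$ are eliminated, leaving only $f'$, $f''$ and a remainder controlled by $\|f^{(k+2)}\|$. First, I would Taylor expand the integrand of the generator in Lemma~\ref{lemma1} to order $k+1$, yielding
\begin{align*}
G_{\varepsilon W_t+\varepsilon y}f(x+\varepsilon y) &\stackrel{(\leq)}{\geq} \sum_{j=1}^{k+1}\frac{\lambda \varepsilon^j \mathbb E[S^j]}{j!}\,f^{(j)}(x+\varepsilon y) -\varepsilon f'(x+\varepsilon y)\\
&\quad +\varepsilon f'(x+\varepsilon y)\mathbbm{1}_{\{x=0\}}+\varepsilon f'(x+\varepsilon y)\mathbbm{1}_{\{x<0\}} \stackrel{(+)}{-} \frac{\lambda \varepsilon^{k+2}}{(k+2)!}\|f^{(k+2)}\|\int_0^\infty s^{k+2}\,dF(s).
\end{align*}
Taking expectations in steady state (so the left-hand side vanishes, as justified by an integrability argument analogous to \eqref{genbound3}) and using $P(W=0)=\varepsilon$, $\lambda \mathbb E[S]=\rho=1-\varepsilon$, together with the moment-matching hypothesis $\mathbb E[S^j]=j!/\mu^j$ for $j\leq k+1$, produces a linear combination of $\mathbb E[f^{(j)}(\varepsilon W+\varepsilon y)]$ for $j=1,\dots,k+1$ with a clean prefactor pattern $\lambda \varepsilon^j/\mu^j$, plus a boundary contribution $\varepsilon^2 f'(\varepsilon y)$ and the Taylor remainder of order $\varepsilon^{k+2}$.

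The second stage is a downward induction on the order of the highest derivative appearing. The base case $j=k+1$ proceeds exactly as in Step~2 of the proof for $k=2$: apply the generator to the function $c_{k+1}f^{(k-1)}$, where the constant $c_{k+1}$ is chosen so that the $f^{(k+1)}$ term in the resulting expansion matches $\frac{\lambda \varepsilon^{k+1}}{\mu^{k+1}}\mathbb E[f^{(k+1)}]$; after using that the generator has mean zero and $P(W=0)=\varepsilon$, this rewrites $\mathbb E[f^{(k+1)}]$ as a multiple of $\mathbb E[f^{(k)}]$, a boundary term at $\varepsilon y$, and a new Taylor remainder involving $\|f^{(k+2)}\|\int_0^\infty s^{d}\,dF(s)$ for some exponent $d\in\{3,\dots,k+2\}$. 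Iterating this for $j=k, k-1,\dots,3$ (each time applying the generator to an appropriately scaled lower derivative $f^{(j-2)}$), every $\mathbb E[f^{(j)}]$ is successively expressed through $\mathbb E[f^{(j-1)}]$ plus a boundary term and a new $\|f^{(k+2)}\|$ remainder.

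At each of the $k-1$ inductive steps two things can happen: the Taylor expansion inside the generator can be cut at order $k+1$ with its own $f^{(k+2)}$ remainder, and the substitution itself splits the error into two pieces (one coming from the integral term and one from the boundary $f''(\varepsilon y)$-type term). This bifurcation is what produces the doubling pattern and gives the bound of $2^{k-1}$ remainder terms in the statement, each of the form $b_{k,j}\|f^{(k+2)}\|\varepsilon^k \int_0^\infty s^{d_{k,j}}dF(s)$ with $3\leq d_{k,j}\leq k+2$. Combining all substitutions, every coefficient in front of $\mathbb E[f^{(j)}]$ for $j\geq 3$ cancels with a matching term from the expansion, leaving only $\mathbb E[f''(\varepsilon W+\varepsilon y)]-\mu\mathbb E[f'(\varepsilon W+\varepsilon y)]-\varepsilon f''(\varepsilon y)+\mu f'(\varepsilon y)$ on one side (up to dividing through by the common $\varepsilon^2/\mu$ factor) and the accumulated $2^{k-1}$ remainders on the other.

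The main obstacle I anticipate is bookkeeping: tracking the exact prefactors across all $k-1$ substitutions, confirming that the cascade of cancellations does wipe out every intermediate derivative cleanly, and verifying that the $\varepsilon$-order never drops below $k$ at any stage. In particular, one must be careful that the boundary terms $f^{(j)}(\varepsilon y)$ generated at each step either collapse into the target $\varepsilon f''(\varepsilon y)$ or combine into a higher-order-$\varepsilon$ remainder; this requires using $1-\rho=\varepsilon$ repeatedly and checking that the moment-matching condition is invoked exactly at the orders it is needed. Once these coefficients are verified, bounding the Taylor remainders by $\|f^{(k+2)}\|\int_0^\infty s^{d_{k,j}}dF(s)$ is routine and yields the stated estimate with the $2^{k-1}$ constants $b_{k,j}$.
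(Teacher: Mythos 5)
Your high-level plan correctly captures the first half of the paper's argument (Taylor expanding the generator to order $k+1$, evaluating in steady state with $P(W=0)=\varepsilon$, and eliminating derivatives one order at a time), and your Step~2 for $j=k+1$ is exactly right. But there is a genuine structural gap in the iteration you describe for $j=k, k-1, \dots, 3$, and it is not merely a bookkeeping issue.

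You propose to eliminate $\mathbb E[f^{(j)}(\varepsilon W+\varepsilon y)]$ by "applying the generator to an appropriately scaled lower derivative $f^{(j-2)}$." If you apply the generator to $f^{(j-2)}$ and cut the Taylor expansion inside the generator at order $3$ (as the paper does in Step~2), the remainder you produce is controlled by $\|f^{(j-2+3)}\| = \|f^{(j+1)}\|$, not by $\|f^{(k+2)}\|$. This is fine precisely once, at $j=k+1$, because $f^{(j+1)}=f^{(k+2)}$ there; for every smaller $j$ you get a lower-order derivative bound, which cannot be absorbed into the desired $\|f^{(k+2)}\|$ error. If instead you expand the generator applied to $f^{(j-2)}$ to enough terms that the remainder sees $f^{(k+2)}$ (i.e., Taylor to order $k+4-j$), the expansion now introduces brand-new terms $f^{(j+1)}, \dots, f^{(k+1)}$ that you had already eliminated, so the cascade does not close. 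The claimed "new $\|f^{(k+2)}\|$ remainder" at each step simply does not appear under the mechanism you describe.

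The paper avoids this by a strong inductive hypothesis: it assumes the full statement $P(i)$ for all $2 \le i \le k-1$, and at step $r\ge 3$ it invokes $P(r-1)$ applied to the test function $u = c\, f^{(k-r+1)}$. Since $P(r-1)$ controls the error by the $(r+1)$th derivative of $u$, and $u^{(r+1)} = c\, f^{(k+2)}$, the inductive result already packages the entire inner cascade of eliminations and hands you an error in exactly the right derivative order. Your plan replaces this nested use of $P(r-1)$ by a flat re-application of the raw generator, which is precisely the step that fails. To repair your argument you would need to recognize that the reduction at step $r$ is itself an instance of the lemma at a lower index and therefore set up the proof as a (strong) induction on $k$, rather than a single top-down sweep of generator applications.
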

Note that the only the derivative of order $(k+2)$ appear on the RHS. 
\begin{remark}\label{rem:induction}
The proof of Lemma~\ref{lemma general k} is considerably more intricate than that of Lemma~\ref{Lemma k=2}. In the proof of Lemma~\ref{Lemma k=2}, we expanded the generator from Lemma~\ref{lemma1} using a Taylor series and, by choosing an appropriate test function, expressed the third derivative in terms of the second and fourth derivatives. However, this approach is insufficient 
for Lemma~\ref{lemma general k}. To obtain a bound of order $\varepsilon^k$, the generator in Lemma~\ref{lemma1} must be expanded up to $(k+1)$ terms, producing derivatives of orders~$1$ through $k+2$. Our goal is then to express the resulting error term solely in terms of the $(k+2)^{\text{nd}}$ derivative.

We accomplish this inductively: by selecting carefully constructed test functions at each stage of the induction, we express the $i^{\text{th}}$ derivative in terms of the $(i-1)^{\text{th}}$ and $(k+2)^{\text{nd}}$ derivatives for all $3 \leq i \leq k+1$. This recursive elimination yields an error term that depends only on the $(k+2)^{\text{nd}}$ derivative of the Stein solution in~\eqref{steq}, which can then be uniformly bounded using Lemma~\ref{lemma3}.   
\end{remark}

Now we have the ingredients to prove Theorem~\ref{thm1} for general $k$. The proof proceeds similar to 
the case $k=2$, with the distinction that the required bound is now of order $\varepsilon^k$, Consequently, instead of truncating the Taylor expansion of $f_h$ around 0 after the third term, as in \eqref{stg20.1}, we expand it up to $k+1$ terms. 
Applying the moment matching condition together with Lemma \ref{lemma general k}, we obtain an error bound that depends on $f_h$ only through its $(k+1)^{th}$ and $(k+2)^{nd}$ derivative and on $\varepsilon^k$.
\begin{proof}[Proof of Theorem \ref{thm1}]
As before, our goal is to bound the quantity
\begin{align}\label{bd}
    \bigg|-\mu \mathbb E[f_h'(\varepsilon W+\varepsilon S)]+ \mathbb E[f_h''(\varepsilon W+\varepsilon S)]+\mu f_h'(0)\bigg|.
\end{align}
We look at each of the terms separately starting with $ \mathbb E[f_h''(\varepsilon W+\varepsilon S)]$
\begin{align}\label{stg19}
\mathbb E[f_h''(\varepsilon W + \varepsilon S)] &=\int_0^\infty \mathbb E[f_h''(\varepsilon W+\varepsilon y)]g(y)dy\\ \nonumber
&=\int_0^\infty \left[\mathbb E[f_h''(\varepsilon W+\varepsilon y)]-\mu \mathbb E[f_h'(\varepsilon W+\varepsilon y)]-\varepsilon f_h''(\varepsilon y)
     +\mu f_h'(\varepsilon y)\ \right] g(y)dy\\ 
   +&\int_0^\infty\mu \mathbb E[f_h'(\varepsilon W+\varepsilon y)]g(y)dy+\int_0^\infty\varepsilon f_h''(\varepsilon y)g(y)dy-\int_0^\infty \mu f_h'(\varepsilon y)g(y)dy .
\end{align}
The first integral will be bounded using Lemma \ref{lemma general k}. Observe that the second integral equals  $\mu \mathbb E[f_h'(\varepsilon W+\varepsilon S)]$ which also appears in~\eqref{bd}. Thus, it remains only to evaluate the last two integrals. We begin with an integration by parts.
\begin{align*}
    \int_0^\infty\varepsilon f_h''(\varepsilon y)g(y)dy&=\varepsilon \left[ \frac{f_h'(\varepsilon y)}{\varepsilon}g(y)\bigg|_0^\infty-\int_0^\infty \frac{f_h'(\varepsilon y)}{\varepsilon }g'(y)dy\right]\\
    &=- f_h'(0)g(0)-\int_0^\infty f_h'(\varepsilon y)g'(y)dy.
\end{align*}
Thus, the difference of the final two integrals in~\eqref{stg19} equals 
\begin{align}\label{stg20}
& \int_0^\infty\varepsilon f_h''(\varepsilon y)g(y)dy-\int_0^\infty \mu f_h'(\varepsilon y)g(y)dy \nonumber \\
&=- f_h'(0)g(0)-\left[\int_0^\infty f_h'(\varepsilon y)g'(y)dy+\int_0^\infty \mu f_h'(\varepsilon y)g(y)dy\right]\nonumber\\
&=-f_h'(0)g(0)-\left[\int_0^\infty f_h'(\varepsilon y)(g'(y)+\mu g(y))dy \right] \nonumber\\
&\stackrel{(a)}{=}-f_h'(0)g(0) \nonumber\\
&-\left[\int_0^\infty \left[f_h'(0)+\varepsilon y f_h''(0)+\frac{\varepsilon^2y^2}{2}f_h'''(0)+\ldots+\frac{\varepsilon^{k-1}y^{k-1}}{(k-1)!}f_h^{(k)}(0)+\frac{\varepsilon^{k}y^{k}}{(k)!}f_h^{(k+1)}(\xi)\right](g'(y)+\mu g(y))dy \right] \nonumber\\
&=-f_h'(0)g(0) -\int_0^\infty \frac{\varepsilon^k y^k}{k!}f_h^{(k+1)}(\xi)(g'(y)+\mu g(y))dy \nonumber\\
&-\left[\int_0^\infty \left[f_h'(0)+\varepsilon y f_h''(0)+\frac{\varepsilon^2y^2}{2}f_h'''(0)+\ldots+\frac{\varepsilon^{k-1}y^{k-1}}{(k-1)!}f_h^{(k)}(0)\right](g'(y)+\mu g(y))dy \right],
\end{align}
where in $(a)$ we have Taylor expanded $f_h$ around 0. We now examine each of the $k$ integrals in the last term of~\eqref{stg20} individually. We begin with the first integral. We have
\begin{align}\label{stg21}
    \int_0^\infty f_h'(0)(g'(y)+\mu g(y))dy=-f_h'(0)g(0)+\mu f_h'(0),
\end{align}
since $g$ is a pdf with non-negative support and  $\lim_{y \to \infty}g(y)=0$.
For $1 \leq i \leq k-1$,
\begin{align}\label{stg22}
    &\int_0^\infty \frac{\varepsilon^i y^i}{i!} f_h^{(i+1)}(0)(g'(y)+\mu g(y))dy \nonumber\\
    &\stackrel{(a)}{=}\frac{\varepsilon^i}{i!} f_h^{(i+1)}(0)\left[y^ig(y)\bigg|_0^\infty-\int_0^\infty iy^{i-1} g(y)dy \right]+\frac{\varepsilon^i}{i!} f_h^{(i+1)}(0)\mu \int_0^\infty y^ig(y)dy \nonumber \\
    &\stackrel{(b)}{=}-\frac{\varepsilon^i}{(i-1)!} f_h^{(i+1)}(0).\frac{(i-1)!}{\mu^{i-1}}+\frac{\varepsilon^i}{i!} f_h^{(i+1)}(0)\mu \frac{i!}{\mu^i} \nonumber\\
    &=0,
\end{align}
where $(a)$ follows using integration by parts and in $(b)$ we have used the Assumption $\mathbb E[S^i]=\mathbb E[Z^i]=\frac{i!}{\mu^i}$. Substituting~\eqref{stg21} and~\eqref{stg22} into~\eqref{stg20} yields the desired expression
\begin{align}\label{stg18.1}
   & \int_0^\infty\varepsilon f_h''(\varepsilon y)g(y)dy-\int_0^\infty \mu f_h'(\varepsilon y)g(y)dy =-\mu f_h'(0) -\int_0^\infty \frac{\varepsilon^k y^k}{k!}f_h^{(k+1)}(\xi)(g'(y)+\mu g(y))dy .
\end{align}
Putting \eqref{stg18.1} in \eqref{stg19},
\begin{align*}
    \mathbb E[f_h''(\varepsilon W+\varepsilon S)]&=\mu \mathbb E[f_h'(\varepsilon W+\varepsilon S)]-\mu f_h'(0) -\int_0^\infty \frac{\varepsilon^k y^k}{k!}f_h^{(k+1)}(\xi)(g'(y)+\mu g(y))dy \\
  &+\int_0^\infty \left[E[f_h''(\varepsilon W+\varepsilon y)]-\mu E[f_h'(\varepsilon W+\varepsilon y)]-\varepsilon f_h''(\varepsilon y)
     +\mu f_h'(\varepsilon y)\ \right] g(y)dy.
\end{align*}
Therefore,
\begin{align*}
    &\bigg|-\mu \mathbb E[f_h'(\varepsilon W+\varepsilon S)]+ \mathbb E[f_h''(\varepsilon W+\varepsilon S)]+\mu f_h'(0)\bigg|\\
    &\leq \int_0^\infty \bigg|\frac{\varepsilon^ky^k}{k!}f_h^{(k+1)}(\xi) (g'(y)+\mu g(y)) \bigg|dy\\
    &+\int_0^\infty\bigg|\mathbb E[f_h''(\varepsilon W+\varepsilon y)]-\mu E[f_h'(\varepsilon W+\varepsilon y)]-\varepsilon f_h''(\varepsilon y)
     +\mu f_h'(\varepsilon y)\bigg| g(y)dy\\
    &\stackrel{(a)} {\leq} \|f_h^{(k+1)}\|\frac{\varepsilon^k}{k!}\int_0^\infty|y^k(g'(y)+\mu g(y))|dy+\|f_h^{(k+2)}\| \varepsilon^k\int_0^\infty \bigg|\left( \sum_{j=1}^{2^{k-1}}b_{k,j}   \left[ \int_0^\infty s^{d_{k,j}} dF(s)\right]\right) g(y) \bigg|dy\\
    &\stackrel{(b)}{\leq}  \varepsilon^k \bigg[\frac{1}{\mu k!}C_1+2  \left( \sum_{j=1}^{2^{k-1}}b_{k,j}   \left[ \int_0^\infty s^{d_{k,j}} dF(s)\right]\right) \bigg]\\
    &\leq C_2 \varepsilon^k,
\end{align*}
where  $(a)$ follows from the Lemma \ref{lemma general k} and $(b)$ follows Lemma \ref{lemma3} as we have $\|h^{(k)}\| \leq 1 $ and therefore $\|f_h^{(k+1)}\| \leq \frac{1}{\mu}$ and $\|f_h^{(k+2)}\| \leq 2$. Therefore from \eqref{mn}
\begin{align*}
    d_{Zol,k}(\varepsilon(W+S),Z)\leq C_2\varepsilon^k.
\end{align*}
\end{proof}
\subsection{Proof of Preliminary Lemmas}\label{sec:preliminarylemmaproof}
In this subsection, we provide the proofs of Lemma \ref{lemma1} and Lemma~\ref{lemma3} which were stated earlier in Section \ref{sec:proofs}.
\begin{proof}[Proof of Lemma \ref{lemma1}]
    From the definition of generators we have
\begin{align*}
   G_{\varepsilon W_t+\varepsilon y}f(z)=\lim_{t \to 0} \frac{\mathbb Ef(\varepsilon W_t+\varepsilon y)-f(z)}{t},
\end{align*}
where $z=\varepsilon W_0+\varepsilon y$. Let $\varepsilon W_0=x$. Therefore,
\begin{align}\label{e1}
       G_{\varepsilon W_t+\varepsilon y}f(x+\varepsilon y)&=\lim_{t \to 0} \frac{\mathbb Ef(\varepsilon W_t+\varepsilon y)-f(x+\varepsilon y)}{t} \nonumber\\
       &=\mathbbm{1}_{\{x=0\}}\left[ \lim_{t \to 0} \frac{\mathbb Ef(\varepsilon W_t+\varepsilon y)-f(\varepsilon y)}{t}\right]+\mathbbm{1}_{\{x>0\}}\left[ \lim_{t \to 0} \frac{\mathbb Ef(\varepsilon W_t+\varepsilon y)-f(x+\varepsilon y)}{t}\right].
\end{align}
 We consider the two terms separately. We begin with the second term
\begin{align*}
    &\mathbbm{1}_{\{x>0\}}\left[ \lim_{t \to 0} \frac{\mathbb Ef(\varepsilon W_t+\varepsilon y)-f(x+\varepsilon y)}{t}\right]\\
    &=\mathbbm{1}_{\{x>0\}}\left[ \lim_{t \to 0}\left[\mathbbm{1}_{\{\frac{x}{\varepsilon}-t>0\}} \frac{\mathbb Ef(\varepsilon W_t+\varepsilon y)-f(x+\varepsilon y)}{t}+\mathbbm{1}_{\{\frac{x}{\varepsilon} -t\leq 0\}}\frac{\mathbb Ef(\varepsilon W_t+\varepsilon y)-f(x+\varepsilon y)}{t}\right]\right]\\
    &\stackrel{(a)}{=}\mathbbm{1}_{\{x>0\}}\left[ \lim_{t \to 0}\left[\mathbbm{1}_{\{\frac{x}{\varepsilon}-t>0\}} \frac{\mathbb Ef(\varepsilon W_t+\varepsilon y)-f(x+\varepsilon y)}{t}\right]\right]\\
    &\stackrel{(b)}{=}\mathbbm{1}_{\{x>0\}}\left[\lim_{t \to 0}\frac{\mathbb E f\left(\varepsilon(\frac{x}{\varepsilon}-t+\sum_{i=1}^{G(t)}S_i)+\varepsilon y\right)-f(x+\varepsilon y)}{t} \right].
\end{align*}
 where in $(a)$ we have used that for $x>0$, $\lim_{t \to 0}\mathbbm{1}_{\{\frac{x}{\varepsilon}-t \leq 0\}}=0$ and $(b)$ follows as when $\frac{x}{\varepsilon}-t >0$ we have that $W_0=\frac{x}{\varepsilon}>t$, that is, the task that came in at time $0$ is still waiting to be served. Therefore the total waiting time at time $t$ is $\frac{x}{\varepsilon}-t+\sum_{i=1}^{G(t)}S_i$ where  $G(t)$ is the number of arrivals between $0$ and $t$ and $S_i$ are the service times of the $i^{th}$ arrival. Therefore,
\begin{align*}
 &\mathbbm{1}_{\{x>0\}}\left[ \lim_{t \to 0} \frac{\mathbb Ef(\varepsilon W_t+\varepsilon y)-f(x+\varepsilon y)}{t}\right]\\
    &\stackrel{(a)}{=}\mathbbm{1}_{\{x>0\}}\left[\lim_{t \to 0}\sum_{n \in \mathbb{N}}\frac{\mathbb E f\left(\varepsilon(\frac{x}{\varepsilon}-t+\sum_{i=1}^{n}S_i)+\varepsilon y\right)-f(x+\varepsilon y)}{t}.P(G(t)=n)\right]\\
    &=\mathbbm{1}_{\{x>0\}}\bigg[\lim_{t \to 0}\frac{\mathbb E f\left(\varepsilon(\frac{x}{\varepsilon}-t)+\varepsilon y\right)-f(x+\varepsilon y)}{t}.P(G(t)=0)\\
    &+\lim_{t \to 0}\mathbb E \left[ f\left(\varepsilon(\frac{x}{\varepsilon}-t+S_1)+\varepsilon y\right)-f(x+\varepsilon y)\right].\frac{P(G(t)=1)}{t}\\
    &+\lim_{t \to 0}\sum_{n \geq 2}\mathbb E \left[ f\left(\varepsilon(\frac{x}{\varepsilon}-t+\sum_{i=1}^n S_i)+\varepsilon y\right)-f(x+\varepsilon y)\right].\frac{P(G(t)=n)}{t}\bigg]\\
    &\stackrel{(b)}{=}\mathbbm{1}_{\{x>0\}}\left[ -\varepsilon f'(x+\varepsilon y)+\lambda \mathbb E[f(x+\varepsilon S+\varepsilon y)-f(x+\varepsilon y)]+0\right],
\end{align*}
where in $(a)$ we have conditioned on $G(t)$ and in $(b)$ we have used that $P(G(t)=n)=\frac{e^{-\lambda t}(\lambda t)^n}{n!}$. In particular $\lim_{t \to 0}P(G(t)=0)=1$, $\lim_{t \to 0}\frac{P(G(t)=1)}{t}=\lambda$ and $\lim_{t \to 0}\frac{P(G(t)=n)}{t}=0$ for $n \geq 2$.

We next consider the first term in \eqref{e1}
\begin{align*}
    &\mathbbm{1}_{\{x=0\}}\left[ \lim_{t \to 0} \frac{\mathbb Ef(\varepsilon W_t+\varepsilon y)-f(\varepsilon y)}{t}\right].
\end{align*}
When $x=0$ we have $W_0=0$. Let $G(t)$ be the number of arrivals in $(0,t]$. If $G(t)=0$ then $W_t=0$ and given $G(t)=1$, let $U \sim \text{Unif}(0,t]$ be the arrival time of the task. Therefore
\begin{align*}
&\mathbbm{1}_{\{x=0\}}\left[ \lim_{t \to 0} \frac{\mathbb{E}f(\varepsilon W_t+\varepsilon y)-f(\varepsilon y)}{t}\right] \\
&=\mathbbm{1}_{\{x=0\}}\Bigg[\lim_{t \to 0}
P(G(t)=0)\frac{f(\varepsilon y)-f(\varepsilon y)}{t}
+\lim_{t \to 0}\frac{P(G(t)=1)}{t}\mathbb{E}\left[f\big(\varepsilon(S-(t-U))^+ + \varepsilon y\big)-f(\varepsilon y)\right] \\
&+\lim_{t \to 0}\sum_{n\ge 2}\frac{P(G(t)=n)}{t}\mathbb{E}\left[f(\varepsilon W_t+\varepsilon y)-f(\varepsilon y)\right]\Bigg]\\
&\stackrel{(a)}{=}\mathbbm{1}_{\{x=0\}}\Bigg[\lim_{t \to 0}\frac{P(G(t)=1)}{t}\mathbb{E}\left[f\big(\varepsilon(S-(t-U))^+ + \varepsilon y\big)-f(\varepsilon y)\right]\Bigg]\\
&\stackrel{(b)}{=}\mathbbm{1}_{\{x=0\}}\left[\lambda(\mathbb E f(\varepsilon S+\varepsilon y)-f(\varepsilon y) \right]\\
&=\mathbbm{1}_{\{x=0\}}\left[\lambda(\mathbb E f(x+\varepsilon S+\varepsilon y)-f(x+\varepsilon y) \right],
\end{align*}
where  we have again used $\lim_{t \to 0}\frac{P(G(t)=n)}{t}=0$  for $n \geq 2$ in $(a)$, and $\lim_{t \to 0}\frac{P(G(t)=1)}{t}=\lambda$ in $(b)$. 
Therefore from \eqref{e1},
\begin{align}\label{stg25}
     G_{\varepsilon W_t+\varepsilon y}f(x+\varepsilon y)&=\lambda \mathbb E[f(x+\varepsilon S+\varepsilon y)-f(x+\varepsilon y)]-\varepsilon f'(x+\varepsilon y)\mathbbm{1}_{\{x>0\}} \nonumber\\
     &=\lambda \int_0^\infty (f(x+\varepsilon y +\varepsilon s)-f(x+\varepsilon y))dF(s)-\varepsilon f'(x+\varepsilon y)\mathbbm{1}_{\{x>0\}}.
\end{align}
\end{proof}
\begin{proof}[Proof of Lemma~\ref{lemma3}]
Since $f_h$ satisfies
\begin{align*}
    -\mu f_h'(x)+ f_h''(x)=h(x)-\mathbb E[h(Y)],
\end{align*}
we have for any $k\geq 2$,
\begin{align}\label{l3}
     -\mu f_h^{(k+1)}(x)+ f_h^{(k+2)}(x)=h^{(k)}(x).
\end{align}
A solution of \eqref{l3} is given by 
$f_h^{(k+1)}(x)=-e^{\mu x }\int_x^\infty h^{(k)}e^{-\mu t }dt.$
Hence,
\begin{align*}
    |f_h^{(k+1)}(x)|=\left|-e^{\mu x }\int_x^\infty h^{(k)}(t)e^{-\mu t }dt\right|
    \leq e^{\mu x }\int_x^\infty |h^{(k)}(t)|e^{-\mu t }dt
     \leq \frac{\|h^{(k)}\|}{\mu}.
\end{align*}
Finally, using \eqref{l3},
\begin{align*}
    | f_h^{(k+2)}(x)|=|h^{(k)}(x)+\mu f_h^{(k+1)}(x)|
     \leq \|h^{(k)}\|+\|h^{(k)}\|
    \leq 2\|h^{(k)}\|.
\end{align*}
This completes the proof. 
\end{proof}

\section{Alternative approach for  $k=2$}\label{sec:altk=2}
In this section, we present an alternative approach for the second order approximation. Our argument relies on the Lindley recursion representation of the stationary sojourn time.
Using this method, we obtain a bound of order $\varepsilon^2$ for $ d_{Zol,2}$ between the scaled stationary sojourn time distribution and $\mathrm{Exp}(\mu)$, but with a different constant compared to Theorem~\ref{thm1}. We state this result formally below.
\begin{theorem}\label{thm:alt}
Let $D$ denote the steady-state sojourn time of an $M/G/1$ queue and let $Z \sim \mathrm{Exp}(\mu)$. Suppose the service time distribution satisfies $\mathbb{E}[S^i] = \mathbb{E}[Z^i]$ for $i \in \{1,2,3\}$ and $\mathbb E[S^4] < \infty$. Then 
\begin{align*}   
    d_{Zol,2}(\varepsilon D,Z)\leq  M\varepsilon^2,
\end{align*}
where
\begin{align*}
    M&=\frac{6\left(1 - \mathbb E[e^{-\lambda S}] - \lambda \mathbb E[S e^{-\lambda S}]\right)}{\lambda^2 (\mathbb E[e^{-\lambda S}])^2}+\frac{2}{\lambda \mu} +\frac{4(1-\mathbb E[e^{-\lambda S}])}{\lambda^2\mathbb E[e^{-\lambda S}]}+\frac{1-\mathbb E[e^{-\lambda S}]}{\lambda\mu \mathbb E[e^{-\lambda S}]}+\frac{14}{\lambda^2}+\frac{4}{\mu^2}+\frac{16 \lambda \mu}{4!} \bigg(\mathbb E[S^4] +16\frac{4!}{\lambda^4}\bigg)
\end{align*}
\end{theorem}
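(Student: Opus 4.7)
The plan is to exploit the distributional Lindley recursion at arrival epochs, $D \stackrel{d}{=} (D-T)^+ + S$, where $T \sim \mathrm{Exp}(\lambda)$ is independent of $(D, S)$. Writing $A := (D-T)^+$, observe that $A \stackrel{d}{=} W$ (the stationary waiting time) and $P(A=0) = \mathbb{E}[e^{-\lambda D}] = \varepsilon$. Starting from the Stein representation
\begin{align*}
\mathbb{E}[h(\varepsilon D)] - \mathbb{E}[h(Z)] = \mathbb{E}[-\mu f_h'(\varepsilon D) + f_h''(\varepsilon D)] + \mu f_h'(0),
\end{align*}
I would first replace $\varepsilon D$ by $\varepsilon A + \varepsilon S$ via the Lindley identity, and then Taylor expand $f_h'$ and $f_h''$ in the $S$-variable around $\varepsilon A$. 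Each expansion is truncated at the order whose remainder involves $f_h^{(4)}$, since Lemma~\ref{lemma3} controls uniformly only up to the fourth derivative. The moment-matching $\mathbb{E}[S^i] = i!/\mu^i$ for $i = 1,2,3$ makes the leading Taylor coefficients exactly those of the exponential; the residual remainders absorb $\mathbb{E}[S^4]$ together with the moments of $T \sim \mathrm{Exp}(\lambda)$ (using standard inequalities of type $(S+T)^4 \leq c(\mathbb{E}[S^4] + \mathbb{E}[T^4])$), producing the contribution $\frac{16\lambda\mu}{4!}\bigl(\mathbb{E}[S^4] + 16\cdot 4!/\lambda^4\bigr)$ in $M$.

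After Taylor expansion the expression still contains $\mathbb{E}[f_h^{(j)}(\varepsilon A)]$ for $j = 1,2,3$; notably the $j=3$ term carries only an $\varepsilon$ prefactor, which is a priori insufficient for an $\varepsilon^2$ bound. The key technical tool is the basic adjoint identity for the M/G/1 queue, obtained by conditioning on $D$ and integrating by parts against the exponential density of $T$: for any smooth $g$,
\begin{align*}
\lambda\,\mathbb{E}[g(A)] + \mathbb{E}[g'(A)] = \lambda\,\mathbb{E}[g(D)] + \varepsilon\,g'(0).
\end{align*}
Applying this identity with $g(x) = f_h^{(j)}(\varepsilon x)$ for $j = 1,2,3$ converts each $\mathbb{E}[f_h^{(j)}(\varepsilon A)]$ into $\mathbb{E}[f_h^{(j)}(\varepsilon D)]$ plus a term with one extra derivative carrying an additional $\varepsilon/\lambda$ factor, at the cost of a boundary value $f_h^{(j+1)}(0)$. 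Iterating pushes all first- and second-derivative dependence onto the fourth derivative, and then applying the Stein equation $-\mu f_h' + f_h'' = h - \mathbb{E}[h(Z)] - \mu f_h'(0)$ at the $\varepsilon D$ level collapses the remaining low-order contributions.

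The accumulated boundary values $f_h^{(j)}(0)$ are combined using the Pollaczek--Khinchine-type identities $\mathbb{E}[e^{-\lambda D}] = \varepsilon$ and $\mathbb{E}[e^{-\lambda W}] = \varepsilon/\mathbb{E}[e^{-\lambda S}]$, together with the derivative relation $\mathbb{E}[S e^{-\lambda S}] = -\partial_\lambda \mathbb{E}[e^{-\lambda S}]$. These are exactly the sources of the terms of $M$ with denominators involving powers of $\mathbb{E}[e^{-\lambda S}]$ and numerators involving $\mathbb{E}[S e^{-\lambda S}]$. With all reductions complete, each surviving summand has the form (explicit constant) $\times\, \varepsilon^2 \times$ (uniform bound on $f_h^{(3)}$ or $f_h^{(4)}$); Lemma~\ref{lemma3} gives $\|f_h^{(3)}\| \leq 1/\mu$ and $\|f_h^{(4)}\| \leq 2$, and collecting the constants yields the stated $M$.

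The main obstacle will be algebraic bookkeeping: the transfer identity must be applied repeatedly, each application introducing a boundary term that must later recombine, and the $O(\varepsilon)$ third-derivative contribution must be shown to cancel via the Stein equation rather than survive into the final bound. A secondary difficulty is that no a priori bound on $f_h^{(5)}$ is available, so every Taylor expansion in $S$ must be truncated exactly at the order that invokes $\|f_h^{(4)}\|$; consequently the remainder must be handled using only the hypothesis $\mathbb{E}[S^4] < \infty$, leaving essentially no slack in the constants and forcing the precise form of the $\mathbb{E}[S^4]$-dependent term of $M$.
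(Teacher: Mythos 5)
Your proposed route is genuinely different from the paper's Section~\ref{sec:altk=2} proof, which works with the discrete-time Lindley recursion $D_{k+1} = D_k + (S_{k+1} - X_{k+1}) + U_k$, Taylor-expands $f(\delta D_{k+1})$ around $\delta D_k$, and exploits the memoryless overshoot $U_k$ so that stationarity $\mathbb{E}[f(\delta D_{k+1})] = \mathbb{E}[f(\delta D_k)]$ combined with explicit conditional moments ($\mathbb{E}[D_k \mid U_k>0]$, $\mathbb{E}[D_k^2 \mid U_k>0]$, computed from derivatives of the Pollaczek--Khinchine Laplace transform at $s=\lambda$) yields the bound directly; those conditional moments are exactly where the $\mathbb{E}[e^{-\lambda S}]$ and $\mathbb{E}[S e^{-\lambda S}]$ terms of $M$ originate. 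Your plan instead Taylor-expands $\varepsilon D = \varepsilon A + \varepsilon S$ around $\varepsilon A$ and transfers $A$-expectations to $D$-expectations via the identity $\lambda\mathbb{E}[g(A)] + \mathbb{E}[g'(A)] = \lambda\mathbb{E}[g(D)] + \varepsilon g'(0)$. That identity is correct (it is precisely the generator stationarity for the unscaled, unshifted waiting-time process), but the manipulation you describe is circular and does not yield a bound.

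Concretely, Taylor-expanding gives
\begin{align*}
E := -\mu\mathbb{E}[f_h'(\varepsilon D)] + \mathbb{E}[f_h''(\varepsilon D)] + \mu f_h'(0)
= -\mu\mathbb{E}[f_h'(\varepsilon A)] + (1-\varepsilon)\mathbb{E}[f_h''(\varepsilon A)] + \frac{\varepsilon(1-\varepsilon)}{\mu}\mathbb{E}[f_h'''(\varepsilon A)] + \mu f_h'(0) + O(\varepsilon^2),
\end{align*}
and applying your transfer identity to $-\mu\mathbb{E}[f_h'(\varepsilon A)]$ and then to $\mathbb{E}[f_h''(\varepsilon A)]$ recombines the coefficient of $\mathbb{E}[f_h''(\varepsilon D)]$ into $1 - \varepsilon + \tfrac{\mu\varepsilon}{\lambda} = 1 + \tfrac{\mu\varepsilon^2}{\lambda}$ while the $\mathbb{E}[f_h'''(\cdot)]$ coefficient becomes $O(\varepsilon^2)$. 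The result is the tautology
$E = E + \tfrac{\mu\varepsilon^2}{\lambda}\mathbb{E}[f_h''(\varepsilon D)] + O(\varepsilon^2)$,
which after cancelling $E$ only yields a uniform bound on $\mathbb{E}[f_h''(\varepsilon D)]$, \emph{not} the desired $|E| = O(\varepsilon^2)$. The step you describe as ``applying the Stein equation at the $\varepsilon D$ level collapses the remaining low-order contributions'' is precisely where the argument short-circuits: the Stein equation reproduces $E$ rather than producing an independent relation. The paper's Section~\ref{sec:proofs} proof avoids this because it works with the \emph{shifted} generator $G_{\varepsilon W_t + \varepsilon y}$ and then integrates against the service density $g(y)$ using moment matching, which produces the nontrivial cancellation of the boundary terms; and the Section~\ref{sec:altk=2} proof avoids it by never recombining into $E$ --- the Lindley increment $S_{k+1}-X_{k+1}+U_k$ is mean-zero in stationarity and has an exploitable correlation structure with $D_k$. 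Neither ingredient is present in your sketch. A secondary issue: your passing remark about $T$-moments and $(S+T)^4$ belongs to the Lindley-increment route (where $X_{k+1}\sim\mathrm{Exp}(\lambda)$ appears directly), not to the $D = A + S$ decomposition you actually set up, so the two halves of the sketch do not fit together.
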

Note that on  $\epsilon<\frac{1}{2}$, we have 
\begin{align*}
    \frac{1}{\lambda^2}\leq \frac{4}{\mu^2}, \ \frac{1}{\lambda \mu} \leq \frac{2}{\mu^2}, \ \frac{\mu}{\lambda^3} \leq \frac{8}{\mu^2},
\end{align*}
and by Jensens,
\begin{align*}
    \mathbb E[e^{-\lambda S}] \geq e^{-\lambda \mathbb E[S]}=e^{-\lambda/\mu} \geq e^{-1}.
\end{align*}
Also we have $0 \leq 1 - \mathbb E[e^{-\lambda S}] - \lambda \mathbb E[S e^{-\lambda S}] \leq 1$. Therefore,
\begin{align*}
    M \leq\frac{24}{\mu^2}e^2+\frac{4}{\mu^2}+\frac{16e}{\mu^2}+\frac{2e}{\mu^2}+\frac{56}{\mu^2}+\frac{4}{\mu^2}+\frac{2}{3}\mu^2 \mathbb E[S^4]+\frac{2048}{\mu^2}=\frac{24}{\mu^2}e^2+\frac{18e}{\mu^2}+\frac{2112}{\mu^2}+\frac{2}{3}\mu^2 \mathbb E[S^4].
\end{align*}
Before proceeding to the proof, we first establish the structural foundation of the argument, beginning with the Lindley recursion representation. Let $X_{k+1}$ be the interarrival times between $k^{th}$ and $(k+1)^{th}$ task, $S_k$ be the service time of the $k^{th}$ task and  $D_k$ be the sojourn time of the $k^{th}$ task. Then the Lindley recursion gives
\begin{align}\label{lindeq8}
    D_{k+1}&=(D_k-X_{k+1})^++S_{k+1 } =(D_k-X_{k+1})+U_k+S_{k+1},
\end{align}
where 
\begin{align*}
    U_k=\begin{cases}
        0 \quad  \ &X_{k+1}\leq D_k\\
        X_{k+1}-D_k & X_{k+1}>D_k
    \end{cases}.
\end{align*}
From the definition, $(D_{k+1}-S_{k+1})U_k=0$ and $(D_k-X_{k+1}+U_k)U_k=0$. 

Let $\delta=\mathbb E[X_k-S_k]$. By the strong memoryless property of $X_{k+1}$,
\begin{align*}
    P(X_{k+1}>D_{k}+s|X_{k+1}>D_k)=P(X_{k+1}>s), \quad s \ge 0.
\end{align*}
Hence, $$P(U_k>s|U_k>0)=P(X_{k+1}>s).$$ Let $p  \triangleq P(U_k=0)$. Then   
\begin{align}\label{l1}
      U_k \sim \begin{cases}
        0, & \text{with probability } p, \\[4pt]
        \mathrm{Exp}(\lambda), & \text{with probability } 1-p .
    \end{cases}
\end{align}
By stationarity, $\mathbb{E}[D_{k+1}] = \mathbb{E}[D_k]$, which from \eqref{lindeq8} implies
\begin{align}\label{lindeq6}
    \mathbb E[U_k]=\mathbb E[X_{k+1}-S_{k+1}]=\frac{1}{\lambda}-\frac{1}{\mu}=\delta.
\end{align}
From \eqref{l1},  we also have $\mathbb{E}[U_k] = (1-p)\tfrac{1}{\lambda}$, so
$p = \tfrac{\lambda}{\mu}$. Furthermore,
\begin{align*}
    \mathbb{E}[U_k^2] = \frac{2}{\lambda}\left(\frac{1}{\lambda}-\frac{1}{\mu}\right),
    \qquad
    \mathbb{E}[U_k^3] = \frac{6}{\lambda^2}\left(\frac{1}{\lambda}-\frac{1}{\mu}\right).
\end{align*}

Further from Remark \ref{remark1} we have
\begin{align}\label{lindeq7}
    \mathbb E[D_k]=\frac{1}{\mu-\lambda}=\frac{1}{\mu(1-\frac{\lambda}{\mu})}.
\end{align}

This establishes the structural foundation of the argument. To complete the proof, we apply a Taylor expansion of $f(\delta D_{k+1})$ around $\delta D_k$. The expansion produces terms involving $\mathbb E[f'(\delta D_k)]$, $\mathbb E[f''(\delta D_k)]$ and higher derivatives, along with mixed terms depending on $U_k,S_{k+1},X_{k+1}$.

By exploiting the independence structure, the moment-matching assumptions, and the derivations above we show that many lower-order contributions cancel and by bounding the remainder terms  yields 
a Lemma given below. Its complete  proof is provided in  Appendix~\ref{app:lindley}.

\begin{lemma}\label{lemma:alt}
 For any smooth $f$,  we have
    \begin{align}\label{eq:delta-bound}
     &\bigg|- \mathbb E[f'(\delta D_k)]+\frac{1}{\lambda \mu} \mathbb E[f''(\delta D_k)]+f'(0)\bigg|\leq \delta^2(M'\|f^{(3)}\|+M''\|f^{(4)}\|). 
\end{align}
where 
\begin{align*}
    M'=\frac{6\left(1 - \mathbb E[e^{-\lambda S}] - \lambda \mathbb E[S e^{-\lambda S}]\right)}{\lambda^2 (\mathbb E[e^{-\lambda S}])^2}+\frac{2}{\lambda \mu} +\frac{4(1-\mathbb E[e^{-\lambda S}])}{\lambda^2\mathbb E[e^{-\lambda S}]}+\frac{1-\mathbb E[e^{-\lambda S}]}{\lambda\mu \mathbb E[e^{-\lambda S}]}+\frac{14}{\lambda^2}+\frac{4}{\mu^2}
\end{align*}
and
\begin{align*}
M''=\frac{8}{4!} \bigg(\mathbb E[S^4] +16\frac{4!}{\lambda^4}\bigg).
\end{align*}
\end{lemma}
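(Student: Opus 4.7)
The plan is to establish the claim by combining stationarity of $\{D_k\}$ with a fourth-order Taylor expansion. Writing $D_{k+1}=D_k+\Delta$ with $\Delta:=-X_{k+1}+U_k+S_{k+1}$, stationarity gives $\mathbb E[f(\delta D_{k+1})]=\mathbb E[f(\delta D_k)]$, so expanding $f(\delta D_k+\delta\Delta)$ around $\delta D_k$ yields
\[
0 = \delta\,\mathbb E[\Delta f'(\delta D_k)] + \tfrac{\delta^2}{2}\mathbb E[\Delta^2 f''(\delta D_k)] + \tfrac{\delta^3}{6}\mathbb E[\Delta^3 f^{(3)}(\delta D_k)] + \mathbb E[R_4],
\]
with $|\mathbb E[R_4]|\le \tfrac{\delta^4}{24}\|f^{(4)}\|\,\mathbb E[\Delta^4]$. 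Since $\mathbb E[S^4]<\infty$ and the moments of $X_{k+1},U_k$ are explicit, $\mathbb E[\Delta^4]$ is finite and produces the $M''$ piece of the bound.

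The next step is to evaluate the conditional moments $\mathbb E[\Delta^j\mid D_k]$. Because $X_{k+1}$ and $S_{k+1}$ are independent of $D_k$ and $U_k=(X_{k+1}-D_k)^+$, the memoryless property of $X_{k+1}$ gives $\mathbb E[U_k^j\mid D_k]=j!\,\lambda^{-j}e^{-\lambda D_k}$, with analogous formulas for mixed moments. Combined with the matching $\mathbb E[S^i]=i!/\mu^i$ for $i=1,2,3$, a direct computation yields
\[
\mathbb E[\Delta\mid D_k] = -\delta + \tfrac{e^{-\lambda D_k}}{\lambda},\qquad \mathbb E[\Delta^2\mid D_k] = \tfrac{2}{\lambda\mu}+2\delta^2-\tfrac{2e^{-\lambda D_k}}{\lambda}(D_k+\delta),
\]
together with an analogous explicit formula for $\mathbb E[\Delta^3\mid D_k]$. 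The crucial observation is that when one further Taylor-expands $f'(\delta D_k)$ around $0$ inside $\delta\,\mathbb E[\Delta f'(\delta D_k)]$, the resulting $\delta^2\lambda^{-1}\mathbb E[D_k e^{-\lambda D_k}f''(\delta D_k)]$ contribution cancels exactly with the $-\delta^2\lambda^{-1}\mathbb E[e^{-\lambda D_k}D_k f''(\delta D_k)]$ piece arising from $\tfrac{\delta^2}{2}\mathbb E[\Delta^2 f''(\delta D_k)]$. After this cancellation and dividing through by $\delta^2$, the identity takes the desired form
\[
-\mathbb E[f'(\delta D_k)] + \tfrac{1}{\lambda\mu}\mathbb E[f''(\delta D_k)] + f'(0) = \text{(residual)},
\]
where the residual is a linear combination of quantities of the form $\mathbb E[D_k^i e^{-\lambda D_k}f^{(j)}(\delta D_k)]$ for small $i,j$, the cubic term $\tfrac{\delta}{6}\mathbb E[\Delta^3 f^{(3)}(\delta D_k)]$, and $\mathbb E[R_4]/\delta^2$.

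The principal technical difficulty is bounding these residuals at the correct order $\delta^2$. For this I would use $\mathbb E[e^{-\lambda D_k}]=\varepsilon=\lambda\delta$ (which follows from $\mathbb P(U_k>0)=\mathbb P(X_{k+1}>D_k)=\varepsilon$) together with a closed-form expression for $\mathbb E[D_k e^{-\lambda D_k}]$, which can be derived either by differentiating the Pollaczek--Khinchine transform at $s=\lambda$ or by applying the Lindley recursion to the test function $x\mapsto xe^{-\lambda x}$. This expression is precisely the source of the $\mathbb E[e^{-\lambda S}]$- and $\mathbb E[Se^{-\lambda S}]$-dependent terms appearing in the constant $M'$. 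Once these transform identities are in place, each residual is bounded by $\delta^2$ times a constant depending only on $\lambda,\mu$, $\mathbb E[S^4]$, and $\mathbb E[e^{-\lambda S}]$, multiplied by $\|f^{(3)}\|$ or $\|f^{(4)}\|$; summing these contributions produces exactly the stated inequality with constants $M'$ and $M''$.
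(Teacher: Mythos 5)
Your approach matches the paper's: both start from stationarity of $\{D_k\}$, apply a fourth-order Taylor expansion of $f(\delta D_{k+1})$ about $\delta D_k$, exploit the moment-matching $\mathbb E[S^i]=\mathbb E[Z^i]$ for $i\le 3$, and derive the $\mathbb E[e^{-\lambda S}]$-dependent constants from the Pollaczek--Khinchine transform via $\mathbb E[D_k^j e^{-\lambda D_k}]=(-1)^jF^{(j)}(\lambda)$. The one technical imprecision is your cancellation step: you claim the term $\delta^2\lambda^{-1}\mathbb E[D_k e^{-\lambda D_k}f''(\delta D_k)]$ ``cancels exactly'' with its negative arising from $\tfrac{\delta^2}{2}\mathbb E[\Delta^2 f''(\delta D_k)]$, but the first of these actually comes from Taylor-expanding $f'(\delta D_k)$ about $0$, which produces a remainder evaluated at an intermediate point $\xi\in(0,\delta D_k)$, not at $\delta D_k$ itself; so the cancellation is not literal. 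To make it exact one must, as the paper does, expand \emph{both} the $f'$-term and the $f''$-term down to $f''(0)$ and verify that the resulting coefficient of $\delta^2 f''(0)$ — the paper's quantity $A=\mathbb E[D_kU_k]+\delta^2+\tfrac12\mathbb E[U_k^2]+\mathbb E[U_k(S_{k+1}-X_{k+1})]$ — equals zero, with all $f'''$-evaluated remainders pushed into the $\delta^2\|f^{(3)}\|$ error. Your bookkeeping in terms of explicit conditional moments $\mathbb E[\Delta^j\mid D_k]$ is an equivalent but slightly different organization than the paper's, which works with unconditional moments and the distributional representation of $U_k$ as a mixture of $\{0\}$ and $\mathrm{Exp}(\lambda)$; both routes lead to the same identity and, once the cancellation is done at the $f''(0)$ level, the same form of bound. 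As stated, though, the ``exact'' cancellation is a gap: the terms you wrote down do not cancel, and carrying through your version would require you to additionally control $\mathbb E[e^{-\lambda D_k}D_k\,(f''(\xi)-f''(\delta D_k))]$, which is possible (it is $O(\delta^4\|f'''\|)$ using $\mathbb E[D_k^2e^{-\lambda D_k}]=O(\delta)$) but not automatic and would yield different constants than the stated $M'$.
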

 We now proceed with the proof of Theorem \ref{thm:alt}.
 \begin{proof}[Proof of Theorem \ref{thm:alt}]
 Since $\delta=\frac{1}{\lambda}-\frac{1}{\mu}$, we have $ \varepsilon=\lambda \delta$. Define the  function
\begin{align*}
    \tilde f(x) := f_h(\lambda x).
\end{align*}
where $f_h$ is the solution of the Stein equation \eqref{steq}.
Then
\begin{align*}
    \tilde f'(x) = \lambda f_h'(\lambda x), \qquad
\tilde f''(x) = \lambda^2 f_h''(\lambda x),
\qquad
\|\tilde f^{(3)}\| = \lambda^3 \|f_h^{(3)}\|,
\qquad
\|\tilde f^{(4)}\| = \lambda^4 \|f_h^{(4)}\|.
\end{align*}
Applying Lemma \ref{lemma:alt} with $f=\tilde f$ gives
\begin{align*}
    \Bigl|-\lambda \mathbb{E}[f_h'(\varepsilon D_k)] + \frac{\lambda}{\mu}\mathbb{E}[f_h''(\varepsilon D_k)] + \lambda f_h'(0)\Bigr|
&\leq M' \lambda \varepsilon^2\|f_h^{(3)}\|+  M'' \lambda^2 \varepsilon^2\|f_h^{(4)}\| \\
\Bigl|-\mu\mathbb{E}[f_h'(\varepsilon D_k)] + \mathbb{E}[f_h''(\varepsilon D_k)] + \mu f_h'(0)\Bigr|
& \leq  M' \mu \varepsilon^2\|f_h^{(3)}\|+  M'' \lambda \mu \varepsilon^2\|f_h^{(4)}\|.
\end{align*}
Hence using Lemma \ref{lemma3} we have for the steady-state sojourn time $D$
\begin{align*}
    d_{Zol,2}(\varepsilon D, Z) \leq \varepsilon^2(M'+2M'' \lambda \mu).
\end{align*}
\end{proof}
\section{Conclusion and Future Directions}
In this work, we studied the prelimit behavior of the stationary sojourn time in an $M/G/1$ queue under heavy traffic. The sojourn time serves as a fundamental measure of system performance. We established that as more moments of the service-time distribution are matched with those of the exponential distribution, the Zolotarev distance between the scaled stationary sojourn time and the exponential distribution decays at increasingly higher rates. This result highlights a hierarchy of approximation accuracy that depends on moment matching.

Several directions remain open for future investigation:
\begin{itemize}
   \item \textbf{Refined target distributions.} In this paper we showed conditions under which the exponential distribution becomes an increasingly accurate approximation. A natural next step is to explore whether a refined target distribution, rather than the exponential, can provide a better fit. Such refinements may relax some of the assumptions on the service-time distribution and yield sharper approximations, in the spirit of the approach in~\cite{BAD}.
   \item \textbf{Beyond sojourn time.} Our analysis relied heavily on the fact that the sojourn time has a continuous stationary distribution. Extending higher-order approximations to other performance metrics, such as waiting times or queue lengths, would be an important generalization. However, these cases involve prelimit distributions with atoms, and controlling the distance between a continuous and a discrete distribution presents new challenges.  
   \item \textbf{Alternative proofs.} We provided an alternative approach for the $k=2$  case of Theorem~\ref{thm1} using the Lindley recursion representation of the waiting time in the Section \ref{sec:altk=2}. Extending this argument to general $k \in \mathbb{N}$ would give a complementary and potentially more direct proof of the main result.
\end{itemize}  
\section{Acknowledgment}
This work was partially supported by NSF grants EPCN-2144316, CMMI-2140534, 2127778 and 2348409.

\bibliographystyle{ACM-Reference-Format}
\bibliography{Bibdata}
\appendix
\section{Proof of Lemma \ref{lemma general k}}\label{secB1}
\begin{proof}[Proof of Lemma \ref{lemma general k} ]
    Recall that we proved Lemma \ref{Lemma k=2} using the following key steps:
\begin{enumerate}
    \item We Taylor expanded the generator in Lemma \ref{lemma1} up to three terms.
    \item We applied the generator to a carefully chosen function to write the third derivative in terms of the second derivative and fourth derivative. 
    This expression of the the third derivative then can be used in the Taylor expansion in Step 1  to get the induction hypothesis which involves only the first, second and fourth derivatives. 
\end{enumerate}
The same idea can be used to prove Lemma \eqref{lemma general k} with some additional steps as we discuss below. 
We use induction to prove Lemma \ref{lemma general k}. Let $i$ be the induction variable and  $P(i)$ be the induction hypothesis defined below
\textit{
\begin{align}
   \nonumber &P(i):\ \text{For any smooth $f$ there exists $b_{i,1},b_{i,2}, \ldots, b_{i,2^{i-1}}$ and $d_{i,1},d_{i,2}, \ldots, d_{i,2^{i-1}}$ }\\
    &\nonumber \text{with $b_{i,j}$ and $d_{i,j}$ constants and $3 \leq d_{i,1},d_{i,2}, \ldots, d_{i,2^{i-1}} \leq i+2$ such that }\\
     &- \sum_{j=1}^{2^{i-1}}b_{i,j} \varepsilon^{i}  \|f^{(i+2)}\|  \left[ \int_0^\infty s^{d_{i,j}} dF(s)\right]  \nonumber\\
     & \leq E[f''(\varepsilon W+\varepsilon y)]-\mu E[f'(\varepsilon W+\varepsilon y)]-\varepsilon f''(\varepsilon y)+\mu f'(\varepsilon y) \nonumber\\
     &\leq \sum_{j=1}^{2^{i-1}}b_{i,j} \varepsilon^{i}  \|f^{(i+2)}\|  \left[ \int_0^\infty s^{d_{i,j}} dF(s)\right]    .
\end{align}}
Note that $P(2)$ corresponds to Lemma \ref{Lemma k=2}. Now  assume $P(i)$ is true $\forall \ 2 \leq i \leq k-1 $. We show this implies $P(k)$. Similar to showing $P(2)$ this involves the following steps;

\begin{enumerate}
    \item  We Taylor expand the generator in Lemma \ref{lemma1} up to $(k+1)$ terms.
    \item  As in the $P(2)$ case we apply the generator to a carefully chosen function to write the $(k+1)^{th}$ derivative in terms of the $k^{th}$ derivative and the $(k+2)^{nd}$ derivative. 
    \\

For proving $P(2)$ the second step gave the third derivative in terms of  the second and fourth derivative, and that was all that was required to prove $P(2)$. Now for proving $P(k)$ we will have to write $f^{(3)}, f^{(4)}, \ldots, f^{(k+1)}$ in terms of $f^{(2)}$ and $f^{(k+2)}$. For this we will need $k-2$ more steps. The general $r^{th}$ step is given as follows for $3 \leq r \leq k$.
\\
    \item [(r)]Use $P(r-1)$ with carefully chosen function to write $f^{(k+3-r)}$ in terms of $f^{(k+2-r)}$ and $f^{(k+2)}$. 
\end{enumerate}
To get a better understanding we give the details of the first three steps before stating and proving Claim \ref{claim1} which gives the expression at the end of a general $r^{(th)}$ step.

From the generator in Lemma \ref{lemma1} we have 
\begin{align}\label{e2}
    G_{\varepsilon W_t+\varepsilon y}f(x+\varepsilon y)&=\lambda \int_0^\infty (f(x+\varepsilon y +\varepsilon s)-f(x+\varepsilon y))dF(s)-\varepsilon f'(x+\varepsilon y)\mathbbm{1}_{\{x>0\}}
\end{align}

\textbf{Step 1: } 
Expanding \eqref{e2} up to $(k+1)$ terms
\begin{align}\label{eqstep1}
       &G_{\varepsilon W_t+\varepsilon y}f(x+\varepsilon y) \nonumber\\ &=\lambda \varepsilon \mathbb E[S]f'(x+\varepsilon y)+\frac{1}{2}\lambda \varepsilon^2 \mathbb E[S^2]f''(x+\varepsilon y)+\frac{1}{6}\lambda \varepsilon^3 \mathbb E[S^3]f'''(x+\varepsilon y)+ \ldots \nonumber\\
       &+\frac{1}{(k+1)!}\lambda \varepsilon^{(k+1)}\mathbb E[S^{(k+1)}]f^{(k+1)}(x+\varepsilon y)-\varepsilon f'(x+\varepsilon y)+\varepsilon f'(x+\varepsilon y)\mathbbm{1}_{\{x=0\}}+\varepsilon f'(x+\varepsilon y)\mathbbm{1}_{\{x<0\}}\nonumber\\
       &+\frac{1}{(k+2)!}\lambda \varepsilon^{(k+2)} \int_0^\infty s^{(k+2)} f^{(k+2)}(\eta)dF(s)\nonumber\\
       &\stackrel{(\leq)}{\geq}  \lambda \varepsilon \mathbb E[S]f'(x+\varepsilon y)+\frac{1}{2}\lambda \varepsilon^2 \mathbb E[S^2]f''(x+\varepsilon y)+\frac{1}{6}\lambda \varepsilon^3 \mathbb E[S^3]f'''(x+\varepsilon y)+ \ldots\nonumber\\
       &+\frac{1}{(k+1)!}\lambda \varepsilon^{(k+1)}\mathbb E[S^{(k+1)}]f^{(k+1)}(x+\varepsilon y)-\varepsilon f'(x+\varepsilon y)+\varepsilon f'(x+\varepsilon y)\mathbbm{1}_{\{x=0\}}+\varepsilon f'(x+\varepsilon y)\mathbbm{1}_{\{x<0\}}\nonumber\\
       &\stackrel{(+)}{-}\frac{1}{(k+2)!}\lambda \varepsilon^{(k+2)} \|f^{(k+2)}\|\int_0^\infty s^{(k+2)} dF(s).
\end{align}

Next, we want to use $ \mathbb E[G_{\varepsilon W_t+\varepsilon y}f(\varepsilon W+\varepsilon y) ]=0$, for the steady state waiting time $W$. We first show that $ \mathbb E[|G_{\varepsilon W_t+\varepsilon y}f(\varepsilon W+\varepsilon y)| ] < \infty$. We have 
\begin{align*}
    \bigg|\int_0^\infty (f(u+\epsilon s)-f(u)) dF(s) \bigg| \leq \sum_{j=1}^k \frac{\epsilon^j \mathbb E[S^j]}{j!}|f^{(j)}(u)|+\frac{\epsilon^{(k+1)}\mathbb E[S^{(k+1)}]}{(k+1)!}\|f^{(k+1)}\|.
\end{align*} 
Now fix $j\in\{1,\dots,k\}$ and set $m := k+1-j$. By Taylor’s theorem applied to $f^{(j)}$ about $0$ up to order $m-1$,
\begin{align*}  
    f^{(j)}(u)
    = \sum_{r=0}^{m-1} \frac{f^{(j+r)}(0)}{r!}u^r
      + \frac{f^{(k+1)}(\xi)}{m!}u^m
\end{align*}
for some $\xi \in (0,u)$
Therefore,
\begin{align*}
      |f^{(j)}(u)|
    \leq \sum_{r=0}^{m-1} \frac{|f^{(j+r)}(0)|}{r!}|u|^r
         + \frac{\|f^{(k+1)}\|}{m!}|u|^{m}
    \leq M_j\big(1+|u|^{m}\big)
    = M_j\big(1+|u|^{k+1-j}\big),
\end{align*}
where $M_j$ is a constant and we have used that the $(k+1)^{th}$ derivative is bounded.
That is for $j\in\{1,\dots,k\}$
\begin{align*}
    \mathbb E[|f^{(j)}(\varepsilon W+\varepsilon y)|] \leq M_j\big(1+\mathbb E[(\varepsilon W+\varepsilon y
    )^{k+1-j}]\big) \leq M_j'\big(1+\varepsilon^{k+1-j}\mathbb E[ W^{k+1-j}]\big) < \infty
\end{align*}
where the last inequality follows from \eqref{wtrecursion}. Hence we have
\begin{align}\label{generatorbound}
      \mathbb E[|G_{\varepsilon W_t+\varepsilon y}f(\varepsilon W+\varepsilon y)| ] &\leq \lambda \bigg[ \sum_{j=1}^k \frac{\epsilon^j \mathbb E[S^j]}{j!}\mathbb E[|f^{(j)}(\varepsilon W+\varepsilon y)|]+\frac{\epsilon^{(k+1)}\mathbb E[S^{(k+1)}]}{(k+1)!}\|f^{(k+1)}\|\bigg]\\
      &+\epsilon \mathbb E[  |f'(\varepsilon W+\varepsilon y)|]< \infty
\end{align}

Therefore for the steady state waiting time $W$, $ \mathbb E[G_{\varepsilon W_t+\varepsilon y}f(\varepsilon W+\varepsilon y) ]=0$, and  we have using $P(W=0)=\varepsilon$ and  $P(W<0)=0$ in \eqref{eqstep1},
\begin{align*}
    0 &\stackrel{(\leq)}{\geq} \lambda \varepsilon \mathbb E[S]\mathbb E[f'(\varepsilon W+\varepsilon y)]+\frac{1}{2}\lambda \varepsilon^2 \mathbb E[S^2]\mathbb E[f''(\varepsilon W+\varepsilon y)]+\frac{1}{6}\lambda \varepsilon^3 \mathbb E[S^3]\mathbb E[f'''(\varepsilon W+\varepsilon y)]+ \ldots\\
    &+\frac{1}{(k+1)!}\lambda \varepsilon^{(k+1)}\mathbb E[S^{(k+1)}]\mathbb E[f^{(k+1)}(\varepsilon W+\varepsilon y)]\\
       &-\varepsilon \mathbb E[f'(\varepsilon W+\varepsilon y)]+\varepsilon \mathbb E [f'(\varepsilon y)\mathbbm{1}_{\{x=0\}}]\stackrel{(+)}{-}\frac{1}{(k+2)!}\lambda \varepsilon^{(k+2)} \|f^{(k+2)}\| \left[\int_0^\infty s^{(k+2)} dF(s)\right]\\
       &=\mathbb \varepsilon E[f'(\varepsilon W+\varepsilon y)]\left(\frac{\lambda}{\mu}-1 \right)+\frac{1}{\mu^2}\lambda \varepsilon^2 \mathbb E[f''(\varepsilon W+\varepsilon y)]+\frac{1}{\mu^3}\lambda \varepsilon^3 \mathbb E[f'''(\varepsilon W+\varepsilon y)]+ \ldots\\
    &+\frac{1}{\mu^{(k+1)}}\lambda \varepsilon^{(k+1)}\mathbb E[f^{(k+1)}(\varepsilon W+\varepsilon y)]\\
       &+\varepsilon^2 f'(\varepsilon y)\stackrel{(+)}{-}\frac{1}{(k+2)!}\lambda \varepsilon^{(k+2)} \|f^{(k+2)}\| \left[\int_0^\infty s^{(k+2)} dF(s)\right]\\
        &=-\varepsilon^2\mathbb E[f'(\varepsilon W+\varepsilon y)]+\frac{1}{\mu^2}\lambda \varepsilon^2 \mathbb E[f''(\varepsilon W+\varepsilon y)]+\frac{1}{\mu^3}\lambda \varepsilon^3 \mathbb E[f'''(\varepsilon W+\varepsilon y)]+ \ldots\\
    &+\frac{1}{\mu^{(k+1)}}\lambda \varepsilon^{(k+1)}\mathbb E[f^{(k+1)}(\varepsilon W+\varepsilon y)]\\
       &+\varepsilon^2 f'(\varepsilon y)\stackrel{(+)}{-}\frac{1}{(k+2)!}\lambda \varepsilon^{(k+2)}\|f^{(k+2)}\| \left[\int_0^\infty s^{(k+2)}dF(s)\right].\\
\end{align*}

Let $b_{k,1}=\frac{\lambda}{(k+2)!}$ and $d_{k,1}=(k+2)$. Therefore,
\begin{align} \label{stg11}
 \nonumber   0 &\stackrel{(\leq)}{\geq} -\varepsilon^2\mathbb E[f'(\varepsilon W+\varepsilon y)]+\frac{1}{\mu^2}\lambda \varepsilon^2 \mathbb E[f''(\varepsilon W+\varepsilon y)]+\frac{1}{\mu^3}\lambda \varepsilon^3 \mathbb E[f'''(\varepsilon W+\varepsilon y)]+ \ldots\\
 \nonumber    &+\frac{1}{\mu^{(k+1)}}\lambda \varepsilon^{(k+1)}\mathbb E[f^{(k+1)}(\varepsilon W+\varepsilon y)]\\
       &+\varepsilon^2 f'(\varepsilon y)\stackrel{(+)}{-}b_{k,1} \varepsilon^{(k+2)} \|f^{(k+2)}\| \left[\int_0^\infty s^{d_{k,2}} dF(s)\right].
\end{align}
\textbf{Step 2:} This step is similar to the Step 2 in the $i=2$ case. Again, from the generator in  Lemma \ref{lemma1} we have 
\begin{align*}
    G_{\varepsilon W_t+\varepsilon y}f(x+\varepsilon y)&=\lambda \int_0^\infty (f(x+\varepsilon y +\varepsilon s)-f(x+\varepsilon y))dF(s)-\varepsilon f'(x+\varepsilon y)\mathbbm{1}_{\{x>0\}}\\
  \nonumber   &=\lambda \varepsilon \mathbb E[S]f'(x+\varepsilon y)+\frac{1}{2}\lambda \varepsilon^2 \mathbb E[S^2]f''(x+\varepsilon y)-\varepsilon f'(x+\varepsilon y)+\varepsilon f'(x+\varepsilon y)\mathbbm{1}_{\{x=0\}}\\
  \nonumber   &+\varepsilon f'(x+\varepsilon y)\mathbbm{1}_{\{x<0\}} +\frac{1}{6}\lambda \varepsilon^3 \int_0^\infty s^3 f'''(\eta)dF(s).
\end{align*}
In order to write the $(k+1)^{th}$ derivative in terms of the $k^{th}$ derivative and $(k+2)^{nd}$ derivative  we apply this generator to $\frac{\frac{1}{(k+1)!}\lambda \varepsilon^{(k+1)} \mathbb E[S^{(k+1)}]}{\frac{1}{2}\lambda \varepsilon^2 \mathbb E[S^2]}f^{(k-1)}(x+\varepsilon y)$. That is,
\begin{align*}
  & G_{\varepsilon W+\varepsilon y}\frac{\frac{1}{(k+1)!}\lambda \varepsilon^{(k+1)} \mathbb E[S^{(k+1)}]}{\frac{1}{2}\lambda \varepsilon^2 \mathbb E[S^2]}f^{(k-1)}(x+\varepsilon y)\\
  &=\lambda \varepsilon \mathbb E[S].\frac{\frac{1}{(k+1)!}\lambda \varepsilon^{(k+1)} \mathbb E[S^{(k+1)}]}{\frac{1}{2}\lambda \varepsilon^2 \mathbb E[S^2]}f^{(k)}(x+\varepsilon y)+\frac{1}{2}\lambda \varepsilon^2 \mathbb E[S^2].\frac{\frac{1}{(k+1)!}\lambda \varepsilon^{(k+1)} \mathbb E[S^{(k+1)}]}{\frac{1}{2}\lambda \varepsilon^2 \mathbb E[S^2]}f^{(k+1)}(x+\varepsilon y)\\
    &-\varepsilon.\frac{\frac{1}{(k+1)!}\lambda \varepsilon^{(k+1)} \mathbb E[S^{(k+1)}]}{\frac{1}{2}\lambda \varepsilon^2 \mathbb E[S^2]} f^{(k)}(x+\varepsilon y)+\varepsilon \frac{\frac{1}{(k+1)!}\lambda \varepsilon^{(k+1)} \mathbb E[S^{(k+1)}]}{\frac{1}{2}\lambda \varepsilon^2 \mathbb E[S^2]}f^{(k)}(x+\varepsilon y)\mathbbm{1}_{\{x=0\}}\\
    &+\varepsilon \frac{\frac{1}{(k+1)!}\lambda \varepsilon^{(k+1)} \mathbb E[S^{(k+1)}]}{\frac{1}{2}\lambda \varepsilon^2 \mathbb E[S^2]}f^{(k)}(x+\varepsilon y)\mathbbm{1}_{\{x<0\}}+\frac{1}{6} \lambda \varepsilon^3 \frac{\frac{1}{(k+1)!}\lambda \varepsilon^{(k+1)} \mathbb E[S^{(k+1)}]}{\frac{1}{2}\lambda \varepsilon^2 \mathbb E[S^2]} \int_0^\infty s^3 f^{(k+2)}(\eta )dF(s)\\
    &=\varepsilon(\rho-1 ).\frac{\frac{1}{(k+1)!}\lambda \varepsilon^{(k+1)} \mathbb E[S^{(k+1)}]}{\frac{1}{2}\lambda \varepsilon^2 \mathbb E[S^2]}f^{(k)}(x+\varepsilon y)+\frac{1}{(k+1)!}\lambda \varepsilon^{(k+1)} \mathbb E[S^{(k+1)}]f^{(k+1)}(x+\varepsilon y)\\
    &+\varepsilon \frac{\frac{1}{(k+1)!}\lambda \varepsilon^{(k+1)} \mathbb E[S^{(k+1)}]}{\frac{1}{2}\lambda \varepsilon^2 \mathbb E[S^2]}  f^{(k)}(x+\varepsilon y)\mathbbm{1}_{\{x=0\}} +\frac{1}{6} \lambda \varepsilon^3 \frac{\frac{1}{(k+1)!}\lambda \varepsilon^{(k+1)} \mathbb E[S^{(k+1)}]}{\frac{1}{2}\lambda \varepsilon^2 \mathbb E[S^2]} \int_0^\infty s^3 f^{(k+2)}(\eta )dF(s)\\
    &+\varepsilon \frac{\frac{1}{(k+1)!}\lambda \varepsilon^{(k+1)} \mathbb E[S^{(k+1)}]}{\frac{1}{2}\lambda \varepsilon^2 \mathbb E[S^2]}f^{(k)}(x+\varepsilon y)\mathbbm{1}_{\{x<0\}}\\
    &=-\varepsilon^2\frac{\frac{1}{(k+1)!}\lambda \varepsilon^{(k+1)} \mathbb E[S^{(k+1)}]}{\frac{1}{2}\lambda \varepsilon^2 \mathbb E[S^2]}f^{(k)}(x+\varepsilon y)+\frac{1}{(k+1)!}\lambda \varepsilon^{(k+1)} \mathbb E[S^{(k+1)}]f^{(k+1)}(x+\varepsilon y)\\
    &+\varepsilon \frac{\frac{1}{(k+1)!}\lambda \varepsilon^{(k+1)} \mathbb E[S^{(k+1)}]}{\frac{1}{2}\lambda \varepsilon^2 \mathbb E[S^2]} f^{(k)}(x+\varepsilon y)\mathbbm{1}_{\{x=0\}}
    +\frac{1}{6} \lambda \varepsilon^3 \frac{\frac{1}{(k+1)!}\lambda \varepsilon^{(k+1)} \mathbb E[S^{(k+1)}]}{\frac{1}{2}\lambda \varepsilon^2 \mathbb E[S^2]} \int_0^\infty s^3 f^{(k+2)}(\eta )dF(s)\\
    &+\varepsilon \frac{\frac{1}{(k+1)!}\lambda \varepsilon^{(k+1)} \mathbb E[S^{(k+1)}]}{\frac{1}{2}\lambda \varepsilon^2 \mathbb E[S^2]}f^{(k)}(x+\varepsilon y)\mathbbm{1}_{\{x<0\}}\\
   & \stackrel{(\geq)} {\leq} -\varepsilon^2\frac{\frac{1}{(k+1)!}\lambda \varepsilon^{(k+1)} \mathbb E[S^{(k+1)}]}{\frac{1}{2}\lambda \varepsilon^2 \mathbb E[S^2]}f^{(k)}(x+\varepsilon y)+\frac{1}{(k+1)!}\lambda \varepsilon^{(k+1)} \mathbb E[S^{(k+1)}]f^{(k+1)}(x+\varepsilon y)\\
    &+\varepsilon \frac{\frac{1}{(k+1)!}\lambda \varepsilon^{(k+1)} \mathbb E[S^{(k+1)}]}{\frac{1}{2}\lambda \varepsilon^2 \mathbb E[S^2]} f^{(k)}(x+\varepsilon y)\mathbbm{1}_{\{x=0\}}
    \stackrel{(-)}{+}\frac{1}{6} \lambda \varepsilon^3 \frac{\frac{1}{(k+1)!}\lambda \varepsilon^{(k+1)} \mathbb E[S^{(k+1)}]}{\frac{1}{2}\lambda \varepsilon^2 \mathbb E[S^2]} \|f^{(k+2)}\| \int_0^\infty s^3dF(s)\\
    &+\varepsilon \frac{\frac{1}{(k+1)!}\lambda \varepsilon^{(k+1)} \mathbb E[S^{(k+1)}]}{\frac{1}{2}\lambda \varepsilon^2 \mathbb E[S^2]}f^{(k)}(x+\varepsilon y)\mathbbm{1}_{\{x<0\}}.
\end{align*}
Moreover since $ \mathbb E[|G_{\varepsilon W_t+\varepsilon y}f^{(k-1)}(\varepsilon W+\varepsilon y)| ] <\infty$
\begin{align*}
   \mathbb E\left[ G_{\varepsilon W+\varepsilon y}\frac{\frac{1}{(k+1)!}\lambda \varepsilon^{(k+1)} \mathbb E[S^{(k+1)}]}{\frac{1}{2}\lambda \varepsilon^2 \mathbb E[S^2]}f^{(k-1)}(\varepsilon W+\varepsilon y)\right]=0,
\end{align*}
we have
\begin{align*}
   & \frac{1}{(k+1)!}\lambda \varepsilon^{(k+1)} \mathbb E[S^{(k+1)}]\mathbb E[f^{(k+1)}(\varepsilon W+\varepsilon y)]\\
   &\stackrel{(\leq)}{\geq} \varepsilon^2\frac{\frac{1}{(k+1)!}\lambda \varepsilon^{(k+1)} \mathbb E[S^{(k+1)}]}{\frac{1}{2}\lambda \varepsilon^2 \mathbb E[S^2]}\mathbb E[f^{(k)}(\varepsilon W+\varepsilon y)]-\varepsilon^2 \frac{\frac{1}{(k+1)!}\lambda \varepsilon^{(k+1)} \mathbb E[S^{(k+1)}]}{\frac{1}{2}\lambda \varepsilon^2 \mathbb E[S^2]} f^{(k)}(\varepsilon y)\\
    &\stackrel{(+)}{-}\frac{1}{6} \lambda \varepsilon^3 \frac{\frac{1}{(k+1)!}\lambda \varepsilon^{(k+1)} \mathbb E[S^{(k+1)}]}{\frac{1}{2}\lambda \varepsilon^2 \mathbb E[S^2]} \|f^{(k+2)}\| \int_0^\infty s^3 dF(s),
\end{align*}
that is,
\begin{align}\label{stg10}
 \nonumber \nonumber   \frac{\lambda \varepsilon^{(k+1)}}{\mu^{(k+1)}}\mathbb E[f^{(k+1)}(\varepsilon W+\varepsilon y)] &\stackrel{(\leq)} {\geq} \frac{\varepsilon^{(k+1)}}{\mu^{(k-1)}}\mathbb E[f^{(k)}(\varepsilon W+\varepsilon y)]-\frac{\varepsilon^{(k+1)}}{\mu^{(k-1)}}f^{(k)}(\varepsilon y)\\
    &\stackrel{(+)}{-}\frac{1}{6} \lambda \varepsilon^{(k+2)} \frac{1}{\mu^{(k-1)}} \|f^{(k+2)}\|\int_0^\infty s^3 dF(s).
\end{align}
Let $b_{k,2}=\frac{\lambda}{6\mu^{(k-1)}}$ and $d_{k,2}=3$. Putting in \eqref{stg11}
\begin{align}\label{stg13}
 \nonumber   0 &\stackrel{(\leq)} {\geq} -\varepsilon^2\mathbb E[f'(\varepsilon W+\varepsilon y)]+\frac{1}{\mu^2}\lambda \varepsilon^2 \mathbb E[f''(\varepsilon W+\varepsilon y)]+\frac{1}{\mu^3}\lambda \varepsilon^3 \mathbb E[f'''(\varepsilon W+\varepsilon y)]+ \ldots\\
 \nonumber &+\frac{1}{\mu^{(k-1)}}\lambda \varepsilon^{(k-1)}\mathbb E[f^{(k-1)}(\varepsilon W+\varepsilon y)]+\frac{1}{\mu^{(k)}}\lambda \varepsilon^{(k)}\mathbb E[f^{(k)}(\varepsilon W+\varepsilon y)]\\
 \nonumber    &+\frac{\varepsilon^{(k+1)}}{\mu^{(k-1)}}\mathbb E[f^{(k)}(\varepsilon W+\varepsilon y)]-\frac{\varepsilon^{(k+1)}}{\mu^{(k-1)}}f^{(k)}(\varepsilon y)\stackrel{(+)}{-}b_{k,2}\varepsilon^{(k+2)} \|f^{(k+2)}\|\int_0^\infty s^{d_{k,2}}dF(s)\\
 \nonumber      &+\varepsilon^2 f'(\varepsilon y)\stackrel{(+)}{-}b_{k,1} \varepsilon^{(k+2)} \|f^{(k+2)}\| \left[\int_0^\infty s^{d_{k,2}} dF(s)\right]\\
  \nonumber     &=-\varepsilon^2\mathbb E[f'(\varepsilon W+\varepsilon y)]+\frac{1}{\mu^2}\lambda \varepsilon^2 \mathbb E[f''(\varepsilon W+\varepsilon y)]+\frac{1}{\mu^3}\lambda \varepsilon^3 \mathbb E[f'''(\varepsilon W+\varepsilon y)]+ \ldots\\
        \nonumber &+\frac{1}{\mu^{(k-1)}}\lambda \varepsilon^{(k-1)}\mathbb E[f^{(k-1)}(\varepsilon W+\varepsilon y)]+\frac{\varepsilon^k}{\mu^{(k-1)}}\left(\frac{\lambda}{\mu}+\varepsilon \right)E[f^{(k)}(\varepsilon W+\varepsilon y)]\\
   \nonumber     &-\frac{\varepsilon^{(k+1)}}{\mu^{(k-1)}}f^{(k)}(\varepsilon y)+\varepsilon^2 f'(\varepsilon y)\\
\nonumber        &\stackrel{(+)}{-}b_{k,1} \varepsilon^{(k+2)} \|f^{(k+2)}\| \left[\int_0^\infty s^{d_{k,2}} dF(s)\right]\stackrel{(+)}{-}b_{k,2}\varepsilon^{(k+2)} \|f^{(k+2)}\|\int_0^\infty s^{d_{k,2}} dF(s)\\
 \nonumber     &=-\varepsilon^2\mathbb E[f'(\varepsilon W+\varepsilon y)]+\frac{1}{\mu^2}\lambda \varepsilon^2 \mathbb E[f''(\varepsilon W+\varepsilon y)]+\frac{1}{\mu^3}\lambda \varepsilon^3 \mathbb E[f'''(\varepsilon W+\varepsilon y)]+ \ldots\\
        \nonumber &+\frac{1}{\mu^{(k-1)}}\lambda \varepsilon^{(k-1)}\mathbb E[f^{(k-1)}(\varepsilon W+\varepsilon y)]+\frac{\varepsilon^k}{\mu^{(k-1)}}E[f^{(k)}(\varepsilon W+\varepsilon y)]\\
   \nonumber     &-\frac{\varepsilon^{(k+1)}}{\mu^{(k-1)}}f^{(k)}(\varepsilon y)+\varepsilon^2 f'(\varepsilon y)\\
       &\stackrel{(+)}{-}b_{k,1} \varepsilon^{(k+2)} \|f^{(k+2)}\| \left[\int_0^\infty s^{d_{k,2}} dF(s)\right]\stackrel{(+)}{-}b_{k,2}\varepsilon^{(k+2)} \|f^{(k+2)}\|\int_0^\infty s^{d_{k,2}} dF(s).
\end{align}
\textbf{Step 3:} We use $P(2)$ with carefully chosen function to write $f^{(k)}$ in terms of $f^{(k-1)}$ and $f^{(k+2)}$. From $P(2)$ we have that 
\begin{align}\label{stg12}
    E[f''(\varepsilon W+\varepsilon y)]& \stackrel{(\leq)} {\geq} \mu E[f'(\varepsilon W+\varepsilon y)]+\varepsilon f''(\varepsilon y)-\mu f'(\varepsilon y)\stackrel{(+)}{-}\sum_{j=1}^{2}b_{2,j} \varepsilon^{2}\|f^{(4)}\| \left[ \int_0^\infty s^{d_{2,j}} dF(s)\right]  .
\end{align}
Let $u_{k,k-1}(x+\varepsilon y)=\frac{\frac{\varepsilon^k}{\mu^{k-1}}}{\frac{1}{\mu}}f^{(k-2)}(x+\varepsilon y)$. Using this $u_{k,k-1}$ in \eqref{stg12},
\begin{align*}
    \frac{\varepsilon^k}{\mu^{k-1}}\mathbb E[f^{(k)}(\varepsilon W+\varepsilon y)] &\stackrel{(\leq)}{\geq} \frac{\varepsilon^k}{\mu^{k-2}} \mathbb E[f^{(k-1)}(\varepsilon W+\varepsilon y)]+\frac{\varepsilon^{k+1}}{\mu^{k-1}} f^{(k)}(\varepsilon y)-\frac{\varepsilon^k}{\mu^{k-2}}f^{(k-1)}(\varepsilon y)\\
    &\stackrel{(+)}{-}\sum_{j=1}^{2} \frac{b_{2,j}}{\mu^{k-1}} \varepsilon^{k+2} \|f^{(k+2)}\| \left[ \int_0^\infty s^{d_{2,j}}dF(s)\right]  . 
\end{align*}
Let $b_{k,3}=\frac{b_{2,1}}{\mu^{k-1}}$, $b_{k,4}=\frac{b_{2,2}}{\mu^{k-1}}$ and $d_{k,3}=d_{2,1}$, $d_{k,4}=d_{2,2}$. Therefore from \eqref{stg13}
\begin{align}\label{stg14}
0 &\stackrel{(\leq)}{\geq} -\varepsilon^2\mathbb E[f'(\varepsilon W+\varepsilon y)]+\frac{1}{\mu^2}\lambda \varepsilon^2 \mathbb E[f''(\varepsilon W+\varepsilon y)]+\frac{1}{\mu^3}\lambda \varepsilon^3 \mathbb E[f'''(\varepsilon W+\varepsilon y)]+ \ldots \nonumber\\
 &+\frac{1}{\mu^{(k-2)}}\lambda \varepsilon^{(k-2)}\mathbb E[f^{(k-2)}(\varepsilon W+\varepsilon y)]+\frac{1}{\mu^{(k-1)}}\lambda \varepsilon^{(k-1)}\mathbb E[f^{(k-1)}(\varepsilon W+\varepsilon y)] \nonumber \\
 &+\frac{\varepsilon^k}{\mu^{k-2}} \mathbb E[f^{(k-1)}(\varepsilon W+\varepsilon y)]+\frac{\varepsilon^{k+1}}{\mu^{k-1}} f^{(k)}(\varepsilon y)-\frac{\varepsilon^k}{\mu^{k-2}}f^{(k-1)}(\varepsilon y)-\frac{\varepsilon^{(k+1)}}{\mu^{(k-1)}}f^{(k)}(\varepsilon y)+\varepsilon^2 f'(\varepsilon y) \nonumber\\
&\stackrel{(+)}{-}\sum_{j=1}^{2^{3-1}}b_{k,j} \varepsilon^{k+2} \|f^{(k+2)}\| \left[ \int_0^\infty s^{d_{k,j}} dF(s)\right] \nonumber \\
&=-\varepsilon^2\mathbb E[f'(\varepsilon W+\varepsilon y)]+\frac{1}{\mu^2}\lambda \varepsilon^2 \mathbb E[f''(\varepsilon W+\varepsilon y)]+\frac{1}{\mu^3}\lambda \varepsilon^3 \mathbb E[f'''(\varepsilon W+\varepsilon y)]+ \ldots \nonumber\\
&+\frac{1}{\mu^{(k-2)}}\lambda \varepsilon^{(k-2)}\mathbb E[f^{(k-2)}(\varepsilon W+\varepsilon y)]+\frac{\varepsilon^{k-1}}{\mu^{k-2}}\left( \frac{\lambda}{\mu}+\varepsilon\right)\mathbb E[f^{(k-1)}(\varepsilon W+\varepsilon y)] \nonumber \\
&-\frac{\varepsilon^k}{\mu^{k-2}}f^{(k-1)}(\varepsilon y)+\varepsilon^2 f'(\varepsilon y)\stackrel{(+)}{-}\sum_{j=1}^{2^{3-1}}b_{k,j} \varepsilon^{k+2} \|f^{(k+2)}\|\left[ \int_0^\infty s^{d_{k,j}} dF(s)\right] \nonumber \\
&=-\varepsilon^2\mathbb E[f'(\varepsilon W+\varepsilon y)]+\frac{1}{\mu^2}\lambda \varepsilon^2 \mathbb E[f''(\varepsilon W+\varepsilon y)]+\frac{1}{\mu^3}\lambda \varepsilon^3 \mathbb E[f'''(\varepsilon W+\varepsilon y)]+ \ldots \nonumber\\
&+\frac{1}{\mu^{(k-2)}}\lambda \varepsilon^{(k-2)}\mathbb E[f^{(k-2)}(\varepsilon W+\varepsilon y)]+\frac{\varepsilon^{k-1}}{\mu^{k-2}}\mathbb E[f^{(k-1)}(\varepsilon W+\varepsilon y)] \nonumber\\
&-\frac{\varepsilon^k}{\mu^{k-2}}f^{(k-1)}(\varepsilon y)+\varepsilon^2 f'(\varepsilon y)\stackrel{(+)}{-}\sum_{j=1}^{2^{3-1}}b_{k,j} \varepsilon^{k+2} \|f^{(k+2)}\| \left[ \int_0^\infty s^{d_{k,j}} dF(s)\right]  .
\end{align}
To get the expression after the $r^{(th)}$ step we state the following claim. The proof of the claim is provided later. 
\begin{claim}\label{claim1}
For $3 \leq r \leq k-1$. there exists $b_{k,1},b_{k,2}, \ldots, b_{k,2^{r-1}}$ and $3 \leq d_{k,1},d_{k,2}, \ldots, d_{k,2^{r-1}} \leq k+2$ such that 
\begin{align}\label{stg26}
    0 &\stackrel{(\leq)} {\geq} -\varepsilon^2\mathbb E[f'(\varepsilon W+\varepsilon y)]+\frac{1}{\mu^2}\lambda \varepsilon^2 \mathbb E[f''(\varepsilon W+\varepsilon y)]+\frac{1}{\mu^3}\lambda \varepsilon^3 \mathbb E[f'''(\varepsilon W+\varepsilon y)]+ \ldots \nonumber\\
        \nonumber &+\frac{1}{\mu^{(k-r+1)}}\lambda \varepsilon^{(k-r+1)}\mathbb E[f^{(k-r+1)}(\varepsilon W+\varepsilon y)]+\frac{\varepsilon^{k-r+2}}{\mu^{k-r+1}}\mathbb E[f^{(k-r+2)}(\varepsilon W+\varepsilon y)]\\
    &-\frac{\varepsilon^{k-r+3}}{\mu^{k-r+1}}f^{(k-r+2)}(\varepsilon y)+\varepsilon^2 f'(\varepsilon y) \stackrel{(+)}{-}\sum_{j=1}^{2^{r-1}}b_{k,j} \varepsilon^{k+2} \|f^{(k+2)}\|\left[ \int_0^\infty s^{d_{k,j}} dF(s)\right].  
\end{align}
\end{claim}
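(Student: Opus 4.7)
The plan is to establish Claim~\ref{claim1} by induction on $r$ over the range $3 \leq r \leq k-1$, leveraging the outer induction hypothesis $P(r)$, which is available for every $2 \leq r \leq k-1$. The base case $r = 3$ is precisely equation~\eqref{stg14} derived in Step 3 of the preceding argument, so nothing new is required there.

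For the inductive step, assume \eqref{stg26} holds at some $r$ with $3 \leq r \leq k-2$. The goal is to eliminate the ``extra'' term $\frac{\varepsilon^{k-r+2}}{\mu^{k-r+1}} \mathbb{E}[f^{(k-r+2)}(\varepsilon W + \varepsilon y)]$ and its boundary counterpart $-\frac{\varepsilon^{k-r+3}}{\mu^{k-r+1}} f^{(k-r+2)}(\varepsilon y)$ in favor of the corresponding quantity one derivative order lower. To do this, I would apply $P(r)$ to the scaled test function $u(x) := c\, f^{(k-r)}(x)$ with $c := \varepsilon^{k-r+2}/\mu^{k-r+1}$. Then $u'(x) = c f^{(k-r+1)}(x)$, $u''(x) = c f^{(k-r+2)}(x)$, and crucially $u^{(r+2)}(x) = c f^{(k+2)}(x)$, so $P(r)$ (applicable because $r \leq k-1$) yields an inequality whose left-hand side matches exactly the combination of extra and boundary terms to be eliminated, with a right-hand side of order $\varepsilon^{k+2}\|f^{(k+2)}\|$.

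Substituting this estimate into \eqref{stg26} replaces the extra term with $\frac{\varepsilon^{k-r+2}}{\mu^{k-r}} \mathbb{E}[f^{(k-r+1)}]$ and its boundary counterpart with $-\frac{\varepsilon^{k-r+2}}{\mu^{k-r}} f^{(k-r+1)}(\varepsilon y)$. Combining the freshly produced $\mathbb{E}[f^{(k-r+1)}]$ term with the existing highest $\lambda$-contribution $\frac{\lambda \varepsilon^{k-r+1}}{\mu^{k-r+1}} \mathbb{E}[f^{(k-r+1)}]$ yields the collapsed coefficient
$\frac{\varepsilon^{k-r+1}}{\mu^{k-r+1}}(\lambda + \varepsilon \mu) = \frac{\varepsilon^{k-r+1}}{\mu^{k-r}}$,
where I have used the telescoping identity $\lambda/\mu + \varepsilon = 1$. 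This is precisely the form of \eqref{stg26} at step $r+1$. On the error side, the application of $P(r)$ contributes $2^{r-1}$ new terms with exponents $d_{r,j} \in [3, r+2]$ and coefficients $b_{r,j}/\mu^{k-r+1}$; together with the $2^{r-1}$ pre-existing terms this produces the required $2^r = 2^{(r+1)-1}$ error terms at step $r+1$, and since $[3, r+2] \subset [3, k+2]$, the exponent bound stated in the claim is preserved.

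The main obstacle I anticipate is the careful bookkeeping: identifying the precise scaling $c$ so that $u^{(r+2)}$ lands exactly on $f^{(k+2)}$ (which dictates the choice of $f^{(k-r)}$ as the base of $u$), consistently tracking the doubling of error terms, and verifying that the telescoping identity $\lambda/\mu + \varepsilon = 1$ collapses the two highest-order coefficients at each level. Once this bookkeeping is set up, the induction unrolls mechanically and terminates at $r = k-1$, producing the form of the estimate needed to complete the proof of $P(k)$ after one final manipulation in the body of the main proof.
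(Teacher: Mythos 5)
Your proposal is correct and takes essentially the same approach as the paper: induct on $r$ starting from the base case $r=3$ given by \eqref{stg14}, and at each step apply the available outer-induction hypothesis to a rescaled shifted-derivative test function (paper: $P(r-1)$ applied to $c'\,f^{(k-r+1)}$ at step $r-1\to r$; you: $P(r)$ applied to $c\,f^{(k-r)}$ at step $r\to r+1$, which is the same map reindexed, and your $c$ differs from the paper's $c'$ only by a factor of $\mu$ that the paper cancels one line later), then use the identity $\lambda/\mu+\varepsilon=1$ to collapse the two highest-order coefficients and the doubling bookkeeping on the error terms.
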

The final step gives $P(k)$ as given below.\\
\textbf{Step $k$:} For $r=(k-1)$  in \eqref{stg26}
\begin{align}\label{stg17}
     0 &\stackrel{(\leq)} {\geq} -\varepsilon^2\mathbb E[f'(\varepsilon W+\varepsilon y)]+\frac{1}{\mu^2}\lambda \varepsilon^2 \mathbb E[f''(\varepsilon W+\varepsilon y)] \nonumber\\
        &+\frac{\varepsilon^3}{\mu^2}\mathbb E[f^{(3)}(\varepsilon W+\varepsilon y)]-\frac{\varepsilon^{4}}{\mu^{2}}f^{(3)}(\varepsilon y)+\varepsilon^2 f'(\varepsilon y)\stackrel{(+)}{-}\sum_{j=1}^{2^{k-2}}b_{k,j} \varepsilon^{k+2} \|f^{(k+2)}\|\left[ \int_0^\infty s^{d_{k,j}}dF(s)\right]  .
\end{align}
Furthermore, from $P(k-1)$ we have
\begin{align}\label{stg16}
     E[f''(\varepsilon W+\varepsilon y)]& \stackrel{(\leq)}{\geq}\mu E[f'(\varepsilon W+\varepsilon y)]+\varepsilon f''(\varepsilon y)-\mu f'(\varepsilon y)\stackrel{(+)}-\sum_{j=1}^{2^{k-2}}b_{k-1,j} \varepsilon^{k-1} \|f^{(k+2)}\| \left[ \int_0^\infty s^{d_{k-1,j}}\right] .
\end{align}
Let $u_{3,2}(x+\varepsilon y)=\frac{\frac{\varepsilon^3}{\mu^2}}{\frac{1}{\mu}}f'(x+\varepsilon y)$. Using  this $u_{3,2}$ in \eqref{stg16}
\begin{align*}
    \frac{\varepsilon^3}{\mu^2}\mathbb E[f^{(3)}(\varepsilon W+\varepsilon y)] &\stackrel{(\leq)}{\geq} \frac{\varepsilon^3}{\mu} \mathbb E[f''(\varepsilon W+\varepsilon y)]+\frac{\varepsilon^4}{\mu^2}f^{(3)}(\varepsilon y)-\frac{\varepsilon^3}{\mu}f''(\varepsilon y)\\
    &\stackrel{(+)}{-}\sum_{j=1}^{2^{k-2}} \frac{b_{k-1,j}}{\mu^2} \varepsilon^{k+2} \|f^{(k+2)}\| \left[ \int_0^\infty s^{d_{k-1,j}}dF(s)\right] .
\end{align*}
Let $b_{k,2^{(k-2)}+i}=\frac{b_{k-1,i}}{\mu}$ and $d_{k, 2^{(k-2)}+i}=d_{k-1,i}$ $\forall 1 \leq i \leq 2^{(k-2)}$. Therefore from \eqref{stg17},
\begin{align*}
     0 &\stackrel{(\leq)}{\geq} -\varepsilon^2\mathbb E[f'(\varepsilon W+\varepsilon y)]+\frac{1}{\mu^2}\lambda \varepsilon^2 \mathbb E[f''(\varepsilon W+\varepsilon y)]+ \frac{\varepsilon^3}{\mu} \mathbb E[f''(\varepsilon W+\varepsilon y)]+\frac{\varepsilon^4}{\mu^2}f^{(3)}(\varepsilon y)-\frac{\varepsilon^3}{\mu}f''(\varepsilon y)\nonumber\\
        &-\frac{\varepsilon^{4}}{\mu^{2}}f^{(3)}(\varepsilon y)+\varepsilon^2 f'(\varepsilon y)\stackrel{(+)}{-}\sum_{j=1}^{2^{k-1}} \frac{b_{k,j}}{\mu} \varepsilon^{k+2} \|f^{(k+2)}\|\left[ \int_0^\infty s^{d_{k,j}}dF(s)\right]\\
        &=-\varepsilon^2\mathbb E[f'(\varepsilon W+\varepsilon y)]+\frac{\varepsilon^2}{\mu} \left( \frac{\lambda}{\mu}+\varepsilon \right) \mathbb E[f''(\varepsilon W+\varepsilon y)]-\frac{\varepsilon^3}{\mu}f''(\varepsilon y)\nonumber\\
        &+\varepsilon^2 f'(\varepsilon y)\stackrel{(+)}{-}\sum_{j=1}^{2^{k-1}} \frac{b_{k,j}}{\mu} \varepsilon^{k+2} \|f^{(k+2)}\| \left[ \int_0^\infty s^{d_{k,j}} dF(s)\right]\\
        &=-\varepsilon^2\mathbb E[f'(\varepsilon W+\varepsilon y)]+\frac{\varepsilon^2}{\mu}  \mathbb E[f''(\varepsilon W+\varepsilon y)]-\frac{\varepsilon^3}{\mu}f''(\varepsilon y)\nonumber\\
        &+\varepsilon^2 f'(\varepsilon y)\stackrel{(+)}{-}\sum_{j=1}^{2^{k-1}} \frac{b_{k,j}}{\mu} \varepsilon^{k+2} \|f^{(k+2)}\|\left[ \int_0^\infty s^{d_{k,j}} dF(s)\right].
\end{align*}
Hence,
\begin{align*}
   \mathbb  E[f''(\varepsilon W+\varepsilon y)]& \stackrel{(\leq)}{\geq} \mu \mathbb E[f'(\varepsilon W+\varepsilon y)]+\varepsilon f''(\varepsilon y)-\mu f'(\varepsilon y)\stackrel{(+)}{-}\sum_{j=1}^{2^{k-1}}b_{k,j} \varepsilon^{k} \|f^{(k+2)}\| \left[ \int_0^\infty s^{d_{k,j}}dF(s)\right].
\end{align*}
This proves $P(k)$.

It remains to prove Claim \ref{claim1} which we provide below.

\begin{proof}{(of Claim \ref{claim1}) }
We again prove by induction.
Equation \eqref{stg13} and  \eqref{stg14} imply the claim for $r=2$ and $r=3$ respectively. Assume that the claim is true for  $r-1$, that is
\begin{align}\label{stg15}
    0 &\stackrel{(\leq)} {\geq} -\varepsilon^2\mathbb E[f'(\varepsilon W+\varepsilon y)]+\frac{1}{\mu^2}\lambda \varepsilon^2 \mathbb E[f''(\varepsilon W+\varepsilon y)]+\frac{1}{\mu^3}\lambda \varepsilon^3 \mathbb E[f'''(\varepsilon W+\varepsilon y)]+ \ldots \nonumber\\
        \nonumber &+\frac{1}{\mu^{(k-r+2)}}\lambda \varepsilon^{(k-r+2)}\mathbb E[f^{(k-r+2)}(\varepsilon W+\varepsilon y)]+\frac{\varepsilon^{k-r+3}}{\mu^{k-r+2}}\mathbb E[f^{(k-r+3)}(\varepsilon W+\varepsilon y)]\\
    &-\frac{\varepsilon^{k-r+4}}{\mu^{k-r+2}}f^{(k-r+3)}(\varepsilon y)+\varepsilon^2 f'(\varepsilon y) \stackrel{(+)}{-}\sum_{j=1}^{2^{r-2}}b_{k,j} \varepsilon^{k+2} \|f^{(k+2)}\| \left[ \int_0^\infty s^{d_{k,j}}dF(s)\right]  .
\end{align}
From $P(r-1)$ we have,
\begin{align}\label{stg24}
    E[f''(\varepsilon W+\varepsilon y)]& \stackrel{(\leq)}{\geq} \mu E[f'(\varepsilon W+\varepsilon y)]+\varepsilon f''(\varepsilon y)-\mu f'(\varepsilon y) \stackrel{(+)}{-}\sum_{j=1}^{2^{r-2}}b_{r-1,j} \varepsilon^{r-1} \|f^{(k+2)}\|\left[ \int_0^\infty s^{d_{r-1,j}}dF(s)\right]  .
\end{align}
Let $u_{k-r+3,k-r+2}(x+\varepsilon y)=\frac{\frac{\varepsilon^{k-r+3}}{\mu^{k-r+2}}}{\frac{1}{\mu}}f^{(k-r+1)}(x+\varepsilon y)$. Using this in \eqref{stg24}
\begin{align*}
    \frac{\varepsilon^{k-r+3}}{\mu^{k-r+2}} \mathbb E[f^{(k-r+3)}(\varepsilon W+\varepsilon y)] &\stackrel{(\leq) } \geq \frac{\varepsilon^{k-r+3}}{\mu^{k-r+1}}\mathbb E[f^{(k-r+2)}(\varepsilon W+\varepsilon y)]+\frac{\varepsilon^{k-r+4}}{\mu^{k-r+2}}f^{(k-r+3)}(\varepsilon y)\\
    &-\frac{\varepsilon^{k-r+3}}{\mu^{k-r+1}}f^{(k-r+2)}(\varepsilon y)\stackrel{(+)}{-}\sum_{j=1}^{2^{r-2}} \frac{b_{r-1,j}}{\mu^{k-r+2}} \varepsilon^{k+2} \|f^{(k+2)}\| \left[ \int_0^\infty s^{d_{r-1,j}} dF(s)\right] .
\end{align*}
 Let $b_{k,2^{(r-2)}+i}=\frac{b_{r-1,i}}{\mu^{k-r+2}}$ and $d_{k,2^{(r-2)}+i}=d_{r-1,i}$ $\forall \ 1 \leq i \leq 2^{(r-2)}$. Putting these in \eqref{stg15}
 \begin{align*}
    0 &\stackrel{(\leq)} {\geq} -\varepsilon^2\mathbb E[f'(\varepsilon W+\varepsilon y)]+\frac{1}{\mu^2}\lambda \varepsilon^2 \mathbb E[f''(\varepsilon W+\varepsilon y)]+\frac{1}{\mu^3}\lambda \varepsilon^3 \mathbb E[f'''(\varepsilon W+\varepsilon y)]+ \ldots \nonumber\\
    \nonumber &+\frac{1}{\mu^{(k-r+1)}}\lambda \varepsilon^{(k-r+1)}\mathbb E[f^{(k-r+1)}(\varepsilon W+\varepsilon y)]+\frac{1}{\mu^{(k-r+2)}}\lambda \varepsilon^{(k-r+2)}\mathbb E[f^{(k-r+2)}(\varepsilon W+\varepsilon y)]\\
    &+\frac{\varepsilon^{k-r+3}}{\mu^{k-r+1}}\mathbb E[f^{(k-r+2)}(\varepsilon W+\varepsilon y)]+\frac{\varepsilon^{k-r+4}}{\mu^{k-r+2}}f^{(k-r+3)}(\varepsilon y)-\frac{\varepsilon^{k-r+3}}{\mu^{k-r+1}}f^{(k-r+2)}(\varepsilon y)\\
    &-\frac{\varepsilon^{k-r+4}}{\mu^{k-r+2}}f^{(k-r+3)}(\varepsilon y)+\varepsilon^2 f'(\varepsilon y)\stackrel{(+)}{-}\sum_{j=1}^{2^{r-1}}b_{k,j} \varepsilon^{k+2} \|f^{(k+2)}\|\left[ \int_0^\infty s^{d_{k,j}} dF(s)\right]\\
    &=-\varepsilon^2\mathbb E[f'(\varepsilon W+\varepsilon y)]+\frac{1}{\mu^2}\lambda \varepsilon^2 \mathbb E[f''(\varepsilon W+\varepsilon y)]+\frac{1}{\mu^3}\lambda \varepsilon^3 \mathbb E[f'''(\varepsilon W+\varepsilon y)]+ \ldots \nonumber\\
     &+\frac{1}{\mu^{(k-r+1)}}\lambda \varepsilon^{(k-r+1)}\mathbb E[f^{(k-r+1)}(\varepsilon W+\varepsilon y)]+\frac{\varepsilon^{k-r+2}}{\mu^{k-r+1}}\left( \frac{\lambda}{\mu}+\varepsilon\right)\mathbb E[f^{(k-r+2)}(\varepsilon W+\varepsilon y)]\\
    &-\frac{\varepsilon^{k-r+3}}{\mu^{k-r+1}}f^{(k-r+2)}(\varepsilon y)+\varepsilon^2 f'(\varepsilon y)\stackrel{(+)}{-}\sum_{j=1}^{2^{r-1}}b_{k,j} \varepsilon^{k+2}\|f^{(k+2)}\| \left[ \int_0^\infty s^{d_{k,j}} dF(s)\right]\\
     &=-\varepsilon^2\mathbb E[f'(\varepsilon W+\varepsilon y)]+\frac{1}{\mu^2}\lambda \varepsilon^2 \mathbb E[f''(\varepsilon W+\varepsilon y)]+\frac{1}{\mu^3}\lambda \varepsilon^3 \mathbb E[f'''(\varepsilon W+\varepsilon y)]+ \ldots \nonumber\\
     &+\frac{1}{\mu^{(k-r+1)}}\lambda \varepsilon^{(k-r+1)}\mathbb E[f^{(k-r+1)}(\varepsilon W+\varepsilon y)]+\frac{\varepsilon^{k-r+2}}{\mu^{k-r+1}}\mathbb E[f^{(k-r+2)}(\varepsilon W+\varepsilon y)]\\
    &-\frac{\varepsilon^{k-r+3}}{\mu^{k-r+1}}f^{(k-r+2)}(\varepsilon y)+\varepsilon^2 f'(\varepsilon y)\stackrel{(+)}{-}\sum_{j=1}^{2^{r-1}}b_{k,j} \varepsilon^{k+2} \|f^{(k+2)}\| \left[ \int_0^\infty s^{d_{k,j}} dF(s)\right].\\
\end{align*}
This proves the claim and completes the proof.
\end{proof}
\end{proof}
 \section{Proof of Lemma \ref{lemma:alt}}\label{app:lindley}
\begin{proof}[Proof of Lemma \ref{lemma:alt}]
Taylor expansion of $f(\delta D_{k+1})$ around $\delta D_k$ yields
\begin{align*}
    f(\delta D_{k+1})&=f(\delta D_k)+\delta f'(\delta D_k)(S_{k+1}-X_{k+1}+U_k)+\frac{1}{2}\delta^2f''(\delta D_k)(S_{k+1}-X_{k+1}+U_k)^2\\
    &+\frac{\delta^3}{6}f'''(\delta D_k)(S_{k+1}-X_{k+1}+U_k)^3+\frac{\delta^4}{4!}f^{(4)}(\delta \Tilde{Z})(S_{k+1}-X_{k+1}+U_k)^4,
\end{align*}
where $\tilde Z$ lies between $D_k$ and $D_{k+1}$. Taking expectation gives,
\begin{align*}
   \mathbb E [f(\delta D_{k+1})]&=\mathbb E[f(\delta D_k)]+\delta \mathbb E[f'(\delta D_k)(S_{k+1}-X_{k+1}+U_k)]+\frac{1}{2}\delta^2\mathbb E[f''(\delta D_k)(S_{k+1}-X_{k+1}+U_k)^2]\\
    &+\frac{1}{6}\delta^3\mathbb E[f'''(\delta D_k)(S_{k+1}-X_{k+1}+U_k)^3]+\frac{\delta^4}{4!}\mathbb E[f^{(4)}(\delta \tilde{Z})(S_{k+1}-X_{k+1}+U_k)^4].
\end{align*}
As $|f(x)| \leq |f(0)|+ \sum_{i=1}^k \frac{|f^{(i)}(0)|}{i!}|x|^i+\frac{|f^{(k+1)}(\xi)|}{(k+1)!}|x|^{(k+1)}$ for some $\xi \in (0,x)$,
\begin{align*}
      \mathbb E[|f(\delta D_k)|]\leq  |f(0)|+\sum_{i=1}^2\frac{|f^{(i)}(0)|}{i!}\mathbb E[\delta^iD_k^i]+\frac{\|f^{(3)}\|}{3!}\mathbb E[\delta^{3}D_k^{3}]< \infty,
\end{align*}
where we have used $\|f^{(3)}\|< \infty $ and \eqref{strecursion}. Thus $  \mathbb E[f(\delta D_k)]< \infty$ and we have $ \mathbb E [f(\delta D_{k+1})]=\mathbb E[f(\delta D_k)]$, that is,
\begin{align}
 \nonumber   0&=\delta \mathbb E[f'(\delta D_k)]\mathbb E[(S_{k+1}-X_{k+1})]+\delta \mathbb E[f'(\delta D_k)U_k]+\frac{1}{2}\delta^2 \mathbb E[f''(\delta D_k)] \mathbb E[(S_{k+1}-X_{k+1})^2] \nonumber\\
 &+\frac{1}{2}\delta^2 \mathbb E[f''(\delta D_k)U_k^2]+\delta^2 \mathbb E[f''(\delta D_k)U_k(S_{k+1}-X_{k+1})]+\frac{1}{6}\delta^3\mathbb E[f'''(\delta D_k)(S_{k+1}-X_{k+1}+U_k)^3] \nonumber\\
&\nonumber +\frac{\delta^4}{4!}\mathbb E[f^{(4)}(\delta \Tilde{Z})(S_{k+1}-X_{k+1}+U_k)^4] \\
\nonumber    &= -\delta^2 \mathbb E[f'(\delta D_k)] +\delta \mathbb E[f'(\delta D_k)U_k]+\frac{1}{2}\delta^2 \mathbb E[f''(\delta D_k)] \mathbb E[(S_{k+1}-X_{k+1})^2]+\frac{1}{2}\delta^2 \mathbb E[f''(\delta D_k)U_k^2]\\
\nonumber    &+\delta^2 \mathbb E[f''(\delta D_k)U_k(S_{k+1}-X_{k+1})]+\frac{1}{6}\delta^3\mathbb E[f'''(\delta D_k)(S_{k+1}-X_{k+1}+U_k)^3]\\
&+\frac{\delta^4}{4!}\mathbb E[f^{(4)}(\delta \Tilde{Z})(S_{k+1}-X_{k+1}+U_k)^4],
\end{align}
where we used the independence of $(X_{k+1},S_{k+1})$ from $D_k$. 
Using the moment matching assumptions in Theorem \ref{thm:alt} in the above yields
\begin{align}\label{lindeq5}
    0&=-\delta^2 \mathbb E[f'(\delta D_k)] +\delta \mathbb E[f'(\delta D_k)U_k]+\frac{1}{2}\delta^2 \mathbb E[f''(\delta D_k)]\left(\frac{2}{\mu^2}+\frac{2}{\lambda^2}-\frac{2}{\lambda \mu} \right)+\frac{1}{2}\delta^2 \mathbb E[f''(\delta D_k)U_k^2] \nonumber\\
\nonumber    &+\delta^2 \mathbb E[f''(\delta D_k)U_k(S_{k+1}-X_{k+1})]+\frac{1}{6}\delta^3\mathbb E[f'''(\delta D_k)(S_{k+1}-X_{k+1}+U_k)^3]\\
&+\frac{\delta^4}{4!}\mathbb E[f^{(4)}(\delta \Tilde{Z})(S_{k+1}-X_{k+1}+U_k)^4] \nonumber\\
&=-\delta^2 \mathbb E[f'(\delta D_k)]+\frac{\delta^2}{\lambda \mu} \mathbb E[f''(\delta D_k)] \nonumber\\
&\underbrace{+\delta \mathbb E[f'(\delta D_k)U_k]+\delta^2 \mathbb E[f''(\delta D_k)]\left(\frac{1}{\mu}-\frac{1}{\lambda}\right)^2+\frac{1}{2}\delta^2 \mathbb E[f''(\delta D_k)U_k^2]+\delta^2 \mathbb E[f''(\delta D_k)U_k(S_{k+1}-X_{k+1})]}_{\triangleq I} \nonumber\\
&+\frac{1}{6}\delta^3\mathbb E[f'''(\delta D_k)(S_{k+1}-X_{k+1})^3]+\frac{1}{6}\delta^3 \mathbb E[f'''(\delta D_k) [U_k^3+3(S_{k+1}-X_{k+1})^2U_k+3(S_{k+1}-X_{k+1})U_k^2]] \nonumber\\
&+\frac{\delta^4}{4!}\mathbb E[f^{(4)}(\delta \Tilde{Z})(S_{k+1}-X_{k+1}+U_k)^4].
\end{align}
Now
\begin{align}\label{ln10}
    I=&\delta \mathbb E[f'(\delta D_k)U_k]+\delta^2 \mathbb E[f''(\delta D_k)]\left(\frac{1}{\mu}-\frac{1}{\lambda}\right)^2 +\frac{1}{2}\delta^2 \mathbb E[f''(\delta D_k)U_k^2]+\delta^2 \mathbb E[f''(\delta D_k)U_k(S_{k+1}-X_{k+1})] \nonumber\\
    &=\delta \mathbb E\left[f'(0)U_k+\delta D_kU_kf''(0)+\frac{1}{2}\delta^2D_k^2U_kf'''(\eta_1)\right]\nonumber \\
    &+\delta^4 \mathbb E\left[f''(0)+\delta D_kf'''(\eta_2)\right]\nonumber \\
    &+\frac{1}{2}\delta^2 \mathbb E\left[f''(0)U_k^2+\delta D_k U_k^2f'''(\eta_3))\right]\nonumber \\
    &+\delta^2 \mathbb E\left[f''(0)U_k(S_{k+1}-X_{k+1})+\delta D_k U_k(S_{k+1}-X_{k+1})f'''(\eta_4)\right]\nonumber \\
    &=f'(0) \delta \mathbb E[U_k]+\delta^2f''(0)\underbrace{\left[ \mathbb E[D_kU_k]+\delta^2+\frac{1}{2}\mathbb E[U_k^2]+\mathbb E[U_k(S_{k+1}-X_{k+1})] \right]}_{\triangleq A}\nonumber \\
    &+\delta^3 \mathbb E \bigg[\frac{1}{2}D_k^2U_kf'''(\eta_1)+\delta^2 D_k f'''(\eta_2) +\frac{1}{2}D_kU_k^2f'''(\eta_3)+D_kU_k(S_{k+1}-X_{k+1})f'''(\eta_4)\bigg].
\end{align}

Since $D_kU_k=X_{k+1}U_k-U_k^2$ and $E[U_k^2]=\frac{2}{\lambda}\left(\frac{1}{\lambda}-\frac{1}{\mu}\right)$, we have
\begin{align}\label{lindeq1}
A&=-\frac{2}{\lambda \mu}+\mathbb E[D_kU_k]+\frac{1}{\mu^2}+\frac{1}{\lambda^2}+\frac{1}{2}\mathbb E[U_k^2]+\mathbb E[U_k(S_{k+1}-X_{k+1})]  \nonumber \\
&=-\frac{2}{\lambda \mu}+\mathbb E[X_{k+1}U_k]-\mathbb E[U_k^2]+\frac{1}{\mu^2}+\frac{1}{\lambda^2}+\frac{1}{2}\mathbb E[U_k^2]+\mathbb E[U_kS_{k+1}]-\mathbb E[U_kX_{k+1}] \nonumber \\
&\stackrel{(a)}{=}-\frac{2}{\lambda \mu}-\frac{1}{\lambda}\left(\frac{1}{\lambda}-\frac{1}{\mu}\right) +\frac{1}{\mu^2}+\frac{1}{\lambda^2}+\frac{1}{\mu}\left( \frac{1}{\lambda}-\frac{1}{\mu}\right) \nonumber \\
&=0,
\end{align}
where in (a) we used that $S_{k+1}$ and $U_k$ are independent. From \eqref{lindeq5}, \eqref{ln10} and \eqref{lindeq1} we have
\begin{align*}
     0&=-\delta^2 \mathbb E[f'(\delta D_k)]+\frac{\delta^2}{\lambda \mu} \mathbb E[f''(\delta D_k)]+f'(0) \delta^2
      \\
      &+\delta^3 \mathbb E \bigg[\frac{1}{2}D_k^2U_kf'''(\eta_1)+\delta^2 D_k f'''(\eta_2) +\frac{1}{2}D_kU_k^2f'''(\eta_3)+D_kU_k(S_{k+1}-X_{k+1})f'''(\eta_4)\bigg]\\
      &+\frac{1}{6}\delta^3\mathbb E[f'''(\delta D_k)]\mathbb E[(S_{k+1}-X_{k+1})^3]+\frac{1}{6}\delta^3 \mathbb E[f'''(\delta D_k) [U_k^3+3(S_{k+1}-X_{k+1})^2U_k+3(S_{k+1}-X_{k+1})U_k^2]] \\
      &+\frac{\delta^4}{4!}\mathbb E[f^{(4)}(\delta \Tilde{Z})(S_{k+1}-X_{k+1}+U_k)^4].
\end{align*}
Hence
\begin{align}\label{ln12}
    &\delta^2\bigg|- \mathbb E[f'(\delta D_k)]+\frac{1}{\lambda \mu} \mathbb E[f''(\delta D_k)]+f'(0)\bigg| \nonumber\\
    &\leq \underbrace{\bigg|\delta^3 \mathbb E \bigg[\frac{1}{2}D_k^2U_kf'''(\eta_1)+\delta^2 D_k f'''(\eta_2) +\frac{1}{2}D_kU_k^2f'''(\eta_3)+D_kU_k(S_{k+1}-X_{k+1})f'''(\eta_4)\bigg]\bigg|}_{\triangleq B} \nonumber\\
      &+\bigg|\underbrace{\frac{1}{6}\delta^3\mathbb E[f'''(\delta D_k)]\mathbb E[(S_{k+1}-X_{k+1})^3]\bigg|}_{\triangleq C}+\underbrace{\bigg|\frac{1}{6}\delta^3 \mathbb E[f'''(\delta D_k) [U_k^3+3(S_{k+1}-X_{k+1})^2U_k+3(S_{k+1}-X_{k+1})U_k^2]] \bigg|}_{\triangleq F} \nonumber\\
      &+\underbrace{\bigg|\frac{\delta^4}{4!}\mathbb E[f^{(4)}(\delta \Tilde{Z})(S_{k+1}-X_{k+1}+U_k)^4]\bigg|}_{\triangleq E}.
\end{align}
We have 
\begin{align}\label{lnCbound}
   C= \bigg| \frac{1}{6}\delta^3\mathbb E[f'''(\delta D_k)]\mathbb E[(S_{k+1}-X_{k+1})^3]\bigg| &\leq  \frac{1}{6}\delta^3 \|f'''\| \bigg|6 \left(\frac{1}{\mu^3}-\frac{1}{\mu^2\lambda}+\frac{1}{\mu\lambda^2} -\frac{1}{\lambda^3}\right)\bigg| \nonumber\\
    &= \frac{1}{6}\delta^3 \|f'''\| \bigg|6 \left(\frac{1}{\mu}-\frac{1}{\lambda}\right)\left(\frac{1}{\mu^2}+\frac{1}{\lambda^2}\right)\bigg| \nonumber\\
    &=\delta^4 \|f'''\|\left(\frac{1}{\mu^2}+\frac{1}{\lambda^2}\right).
\end{align}

And
\begin{align}\label{lnD}
   F&= \bigg|\frac{1}{6}\delta^3 \mathbb E[f'''(\delta D_k) [U_k^3+3(S_{k+1}-X_{k+1})^2U_k+3(S_{k+1}-X_{k+1})U_k^2]] \bigg| \nonumber\\
    &\leq \delta^3 \frac{1}{6}\| f'''\|\mathbb E\left[U_k^3+3(S_{k+1}-X_{k+1})^2U_k+3|(S_{k+1}-X_{k+1})|U_k^2\right] \nonumber\\
    &=\delta^3 \frac{1}{6}\| f'''\|\bigg[\frac{6}{\lambda^2}\left(\frac{1}{\lambda}-\frac{1}{\mu}\right)+3\mathbb E[(S_{k+1}-X_{k+1})^2U_k|U_k>0]P(U_k>0) \nonumber \\
    &+3 \mathbb E[|(S_{k+1}-X_{k+1})|U_k^2|U_k>0]P(U_k>0) \bigg] \nonumber\\
    &=\delta^4 \frac{1}{6}\| f'''\|\bigg[\frac{6}{\lambda^2}+3\lambda\underbrace{\mathbb E[(S_{k+1}-X_{k+1})^2U_k|U_k>0]}_{\triangleq F_1}+3\lambda\underbrace{ \mathbb E[|(S_{k+1}-X_{k+1})|U_k^2|U_k>0]}_{\triangleq F_2}\bigg].
\end{align}
First we evaluate the term $F_2$. We have on $\{U_k >0\}$, $X_{k+1}=D_k+U_k$. That is  $|(S_{k+1}-X_{k+1})|=|(S_{k+1}-D_k-U_k| \leq S_{k+1}+D_k+U_k $. Therefore 
\begin{align*}
   \mathbb E[|(S_{k+1}-X_{k+1})|U_k^2|U_k>0] \leq \mathbb E[S_{k+1}U_k^2|U_k>0] +\mathbb E[D_{k}U_k^2|U_k>0]+\mathbb E[U_k^3|U_k>0].
\end{align*}
From \eqref{l1}, we have
\begin{align*}
    \mathbb E[U_k^3|U_k>0] =\frac{6}{\lambda^3},
\end{align*}
and since $S_{k+1}$ is independent of $U_k$,
\begin{align*}
    \mathbb E[S_{k+1}U_k^2|U_k>0]=\mathbb E[S_{k+1}]\mathbb E[U_k^2|U_k>0]=\frac{2}{\mu \lambda^2}.
\end{align*}
We have using the strong memoryless property
\begin{align*}
      \mathbb E[D_{k}U_k^2|U_k>0]=\mathbb E[\mathbb E[D_kU_k^2|D_k,U_k>0]|U_k>0]=\mathbb E[D_k\mathbb E[U_k^2|D_k,U_k>0]|U_k>0]=\frac{2}{\lambda^2}\mathbb E[D_{k}|U_k>0].
\end{align*}
Further,
\begin{align}\label{ln2}
    \mathbb E[D_{k}|U_k>0]=\frac{\mathbb E[D_k\mathrm{1}(U_k>0)]}{P(U_k>0)}&=\frac{\mathbb E[D_k\mathrm{1}(X_{k+1}>D_k)]}{\mathbb E[1(X_{k+1}>D_k)]} \nonumber\\
    &=\frac{\mathbb E\left[ \mathbb E[D_k\mathrm{1}(X_{k+1}>D_k)|D_k]\right]}{\mathbb E\left[ \mathbb E[\mathrm{1}(X_{k+1}>D_k)|D_k]\right]} \nonumber\\
    &=\frac{\mathbb E\left[D_k \mathbb E[\mathrm{1}(X_{k+1}>D_k)|D_k]\right]}{\mathbb E\left[ \mathbb E[\mathrm{1}(X_{k+1}>D_k)|D_k]\right]} \nonumber\\
    &=\frac{\mathbb E[D_ke^{-\lambda D_k}]}{\mathbb E[e^{-\lambda D_k}]}.
\end{align}
Now from the PK formula of sojourn-time
\begin{align*}
   F(s) := \mathbb E[e^{- s D_k}]=\left(1-\frac{\lambda}{\mu}\right)\frac{s\mathbb E[e^{-s S}]}{s-\lambda+\lambda \mathbb E[e^{-sS}]}.
\end{align*}
Then 
\begin{align*}
    \mathbb E[e^{-\lambda D_k}]=F(\lambda)=\left(1-\frac{\lambda}{\mu}\right),
\end{align*}
and 
\begin{align}\label{ln1}
    \mathbb E[D_ke^{-\lambda D_k}]=-F'(s)|_{s=\lambda}.
\end{align}
Let $\tilde{S}(s)=E[e^{-sS}]$. We have
\begin{align}\label{ln4}
    F'(s)=\left(1-\frac{\lambda}{\mu} \right)\frac{\left(\tilde{S}(s)+s\tilde{S}'(s)\right)\left(s-\lambda+\lambda\tilde{S}(s)\right)-\left(s\tilde{S}(s)\right)\left(1+\lambda \tilde{S}'(s)\right)}{\left(s-\lambda+\lambda\tilde{S}(s)\right)^2},
\end{align}
and hence
\begin{align*}
     F'(\lambda)=\left(1-\frac{\lambda}{\mu} \right)\frac{\tilde{S}(\lambda)-1}{\lambda\tilde{S}(\lambda)}=\left(1-\frac{\lambda}{\mu} \right)\frac{\mathbb E[e^{-\lambda S}]-1}{\lambda\mathbb E[e^{-\lambda S}]}.
\end{align*}
And from \eqref{ln1}
\begin{align*}
     \mathbb E[D_ke^{-\lambda D_k}]=\left(1-\frac{\lambda}{\mu} \right)\frac{1-\mathbb E[e^{-\lambda S}]}{\lambda\mathbb E[e^{-\lambda S}]}.
\end{align*}
Therefore, from \eqref{ln2}
\begin{align}\label{ln8}
    \mathbb E[D_{k}|U_k>0]=\frac{1-\mathbb E[e^{-\lambda S}]}{\lambda \mathbb E[e^{-\lambda S}]},
\end{align}
and 
\begin{align}\label{lnD2}
   F_2= \mathbb E[|(S_{k+1}-X_{k+1})|U_k^2|U_k>0] \leq \frac{2}{\mu \lambda^2}+\frac{2}{\lambda^2}\frac{1-\mathbb E[e^{-\lambda S}]}{\lambda\mathbb E[e^{-\lambda S}]}+\frac{6}{\lambda^3}.
\end{align}
Next for $F_1$ we have again on $\{U_k >0\}$, $X_{k+1}=D_k+U_k$. That is
\begin{align*}
(S_{k+1}-X_{k+1})^2 = (S_{k+1}-D_k-U_k)^2 \leq 3(S_{k+1}^2+D_k^2+U_k^2).
\end{align*}
Therefore
\begin{align}\label{ln13}
F_1=\mathbb E[(S_{k+1}-X_{k+1})^2U_k|U_k>0] \leq 3\mathbb E[S_{k+1}^2U_k|U_k>0] + 3\mathbb E[D_{k}^2U_k|U_k>0]+ 3\mathbb E[U_k^3|U_k>0].
\end{align}
As before
\begin{align}\label{ln14}
\mathbb E[U_k^3|U_k>0] = \frac{6}{\lambda^3},
\end{align}
and since $S_{k+1}$ is independent of $U_k$,
\begin{align}\label{ln15}
\mathbb E[S_{k+1}^2U_k|U_k>0]=\mathbb E[S_{k+1}^2]\mathbb E[U_k|U_k>0]=\frac{2}{\mu^2\lambda}.
\end{align}
We have using the strong memoryless property 
\begin{align}\label{ln6}
     \mathbb E[D_{k}^2U_k|U_k>0]=\mathbb E[ \mathbb E[D_{k}^2U_k|D_k,U_k>0]|U_k>0]=\mathbb E[ D_{k}^2\mathbb E[U_k|D_k,U_k>0]|U_k>0]=\frac{1}{\lambda}\mathbb E[D_{k}^2|U_k>0].
\end{align}
Further, just as in \eqref{ln2}
\begin{align}\label{ln3}
\mathbb E[D_{k}^2|U_k>0] &= \frac{\mathbb E[D_k^2 e^{-\lambda D_k}]}{\mathbb E[e^{-\lambda D_k}]}.
\end{align}
Recall the PK formula $F(s) = \mathbb E[e^{- s D_k}]$ and $ \mathbb E[e^{-\lambda D_k}]=\left(1-\frac{\lambda}{\mu}\right)$. Differentiating $F(s)$ twice with respect to $s$ we get
\begin{align*}
F''(s) = \mathbb E[D_k^2 e^{-s D_k}].
\end{align*}
Thus, the numerator in \eqref{ln3} is $F''(\lambda)$.
From \eqref{ln4}
\begin{align*}
    F''(\lambda) = \left(1-\frac{\lambda}{\mu} \right)\frac{2(1 - \tilde{S}(\lambda) + \lambda \tilde{S}'(\lambda))}{(\lambda \tilde{S}(\lambda))^2},
\end{align*}
and therefore \eqref{ln3} gives
\begin{align}\label{ln5}
\mathbb E[D_{k}^2|U_k>0] =  \frac{2(1 - \tilde{S}(\lambda) + \lambda \tilde{S}'(\lambda))}{(\lambda \tilde{S}(\lambda))^2}=\frac{2\left(1 - \mathbb E[e^{-\lambda S}] - \lambda \mathbb E[S e^{-\lambda S}]\right)}{\lambda^2 (\mathbb E[e^{-\lambda S}])^2}.
\end{align}
Combining \eqref{ln13},\eqref{ln14}, \eqref{ln15}, \eqref{ln6}, \eqref{ln5}, we obtain 
\begin{align}\label{lnD1}
F_1=\mathbb E[(S_{k+1}-X_{k+1})^2U_k|U_k>0] \leq \frac{6}{\mu^2\lambda} + \frac{6\left(1 - \mathbb E[e^{-\lambda S}] - \lambda \mathbb E[S e^{-\lambda S}]\right)}{\lambda^3 (\mathbb E[e^{-\lambda S}])^2} + \frac{18}{\lambda^3}.
\end{align}
From \eqref{lnD}, \eqref{lnD1} and \eqref{lnD2}
\begin{align}\label{lnDbound}
    F \leq& \delta^4 \frac{1}{6}\| f'''\|\bigg[\frac{6}{\lambda^2}+\frac{18}{\mu^2} + \frac{18\left(1 - \mathbb E[e^{-\lambda S}] - \lambda \mathbb E[S e^{-\lambda S}]\right)}{\lambda^2 (\mathbb E[e^{-\lambda S}])^2} + \frac{54}{\lambda^2}+ \frac{6}{\mu \lambda}+\frac{6(1-\mathbb E[e^{-\lambda S}])}{\lambda^2\mathbb E[e^{-\lambda S}]}+\frac{18}{\lambda^2}\bigg] \nonumber\\
    =& \delta^4 \frac{1}{6}\| f'''\|\bigg[\frac{78}{\lambda^2}+\frac{18}{\mu^2} + \frac{18\left(1 - \mathbb E[e^{-\lambda S}] - \lambda \mathbb E[S e^{-\lambda S}]\right)}{\lambda^2 (\mathbb E[e^{-\lambda S}])^2} + \frac{6}{\mu \lambda}+\frac{6(1-\mathbb E[e^{-\lambda S}])}{\lambda^2\mathbb E[e^{-\lambda S}]}\bigg].
\end{align}
Next
 \begin{align}\label{lnB}
B &= \bigg| \delta^3 \mathbb E \bigg[\frac{1}{2}D_k^2U_kf'''(\eta_1)+\delta^2 D_k f'''(\eta_2) +\frac{1}{2}D_kU_k^2f'''(\eta_3)+D_kU_k(S_{k+1}-X_{k+1})f'''(\eta_4)\bigg] \bigg| \nonumber\\
       & \leq \delta^3 \|f'''\|\bigg[\frac{1}{2}\mathbb E[D_k^2U_k]+\delta^2\mathbb E[D_k]+ \frac{1}{2}\mathbb E[D_kU_k^2] +\mathbb E[D_kU_k|S_{k+1}-X_{k+1}|] \bigg] \nonumber\\
       & = \delta^3 \|f'''\|\bigg[\frac{1}{2}\mathbb E[D_k^2U_k|U_k>0]P(U_k>0)+\delta^2\mathbb E[D_k]+ \frac{1}{2}\mathbb E[D_kU_k^2|U_k>0]P(U_k>0) \nonumber\\
       &+\mathbb E[D_kU_k|S_{k+1}-X_{k+1}||U_k>0]P(U_k>0)\bigg] \nonumber\\
       & = \delta^4 \|f'''\|\bigg[\frac{1}{2}\lambda \mathbb E[D_k^2U_k|U_k>0]+\delta\mathbb E[D_k]+ \frac{\lambda}{2}\mathbb E[D_kU_k^2|U_k>0] +\lambda\mathbb E[D_kU_k|S_{k+1}-X_{k+1}||U_k>0] \bigg].      
\end{align}
From \eqref{ln6}, \eqref{ln5} we have
\begin{align}\label{ln7}
    E[D_k^2U_k|U_k>0]=\frac{2\left(1 - \mathbb E[e^{-\lambda S}] - \lambda \mathbb E[S e^{-\lambda S}]\right)}{\lambda^3 (\mathbb E[e^{-\lambda S}])^2}.
\end{align}
From \eqref{lindeq7} we have 
\begin{align}\label{lnB2}
    \delta\mathbb E[D_k]=\frac{1}{\lambda \mu}.
\end{align}
Again, using the strong memoryless property,
\begin{align}\label{ln9}
     \mathbb E[D_kU_k^2|U_k>0]=\mathbb E[\mathbb E[D_kU_k^2|D_k,U_k>0]|U_k>0]=&\mathbb E[D_k\mathbb E[U_k^2|D_k,U_k>0]|U_k>0] \nonumber\\
     =&\frac{2}{\lambda^2}\mathbb E[D_k|U_k>0]=\frac{2(1-\mathbb E[e^{-\lambda S}])}{\lambda^3 \mathbb E[e^{-\lambda S}]},
\end{align}
where the last equality follows from \eqref{ln8}. On the event $\{U_k > 0\}$, we again substitute $X_{k+1} = D_k + U_k$. Hence, $|S_{k+1} - X_{k+1}| = |S_{k+1} - D_k - U_k| \leq S_{k+1} + D_k + U_k$. As a result,

\begin{align*}
    \mathbb E[D_kU_k|S_{k+1}-X_{k+1}||U_k>0] &\leq  \mathbb E[D_kU_kS_{k+1}|U_k>0] +\mathbb E[D_k^2U_k|U_k>0] +\mathbb E[D_kU_k^2|U_k>0] \\
    &\stackrel{(a)}{=}\mathbb E[S_{k+1}]\mathbb E[D_kU_k|U_k>0] +\mathbb E[D_k^2U_k|U_k>0] +\mathbb E[D_kU_k^2|U_k>0]\\
    &\stackrel{(b)}{=}\mathbb E[S_{k+1}]\mathbb E[D_k\mathbb E[U_k|D_k,U_k>0]|U_k>0]+\mathbb E[D_k^2U_k|U_k>0] \\
    &+\mathbb E[D_kU_k^2|U_k>0]\\
    &=\mathbb E[S_{k+1}]\frac{\mathbb E[D_k|U_k>0]}{\lambda}+\mathbb E[D_k^2U_k|U_k>0] +\mathbb E[D_kU_k^2|U_k>0],
\end{align*}
where in $(a)$ we have used the independence of $S_{k+1}$ with $D_k,U_k$, and in $(b)$ we have again used strong memoryless property of the interarrival times. From  \eqref{ln8}, \eqref{ln7} and \eqref{ln9},
\begin{align}\label{lnB4}
    \mathbb E[D_kU_k|S_{k+1}-X_{k+1}||U_k>0] &\leq \frac{1-\mathbb E[e^{-\lambda S}]}{\lambda^2\mu \mathbb E[e^{-\lambda S}]}+\frac{2\left(1 - \mathbb E[e^{-\lambda S}] - \lambda \mathbb E[S e^{-\lambda S}]\right)}{\lambda^3 (\mathbb E[e^{-\lambda S}])^2}+\frac{2(1-\mathbb E[e^{-\lambda S}])}{\lambda^3 \mathbb E[e^{-\lambda S}]}.
\end{align}
From \eqref{lnB},\eqref{ln7}, \eqref{lnB2}, \eqref{ln9} and \eqref{lnB4}, 
\begin{align}\label{lnBbound}
    B \leq& \delta^4 \|f'''\|\bigg[\frac{\left(1 - \mathbb E[e^{-\lambda S}] - \lambda \mathbb E[S e^{-\lambda S}]\right)}{\lambda^2 (\mathbb E[e^{-\lambda S}])^2}+\frac{1}{\lambda \mu} +\frac{(1-\mathbb E[e^{-\lambda S}])}{\lambda^2\mathbb E[e^{-\lambda S}]} \nonumber\\
    &+\frac{1-\mathbb E[e^{-\lambda S}]}{\lambda\mu \mathbb E[e^{-\lambda S}]}+\frac{2\left(1 - \mathbb E[e^{-\lambda S}] - \lambda \mathbb E[S e^{-\lambda S}]\right)}{\lambda^2 (\mathbb E[e^{-\lambda S}])^2}+\frac{2(1-\mathbb E[e^{-\lambda S}])}{\lambda^2 \mathbb E[e^{-\lambda S}]}\bigg] \nonumber\\
    =& \delta^4 \|f'''\|\bigg[\frac{3\left(1 - \mathbb E[e^{-\lambda S}] - \lambda \mathbb E[S e^{-\lambda S}]\right)}{\lambda^2 (\mathbb E[e^{-\lambda S}])^2}+\frac{1}{\lambda \mu} +\frac{3(1-\mathbb E[e^{-\lambda S}])}{\lambda^2\mathbb E[e^{-\lambda S}]}+\frac{1-\mathbb E[e^{-\lambda S}]}{\lambda\mu \mathbb E[e^{-\lambda S}]}\bigg].
\end{align}
And finally,
\begin{align}\label{lnEbound}
   E= \bigg|\frac{\delta^4}{4!}\mathbb E[f^{(4)}(\delta \Tilde{Z})(S_{k+1}-X_{k+1}+U_k)^4] \bigg| &\leq \frac{\delta^4}{4!} \|f^{(4)}\|\mathbb E[(S_{k+1}-X_{k+1}+U_k)^4] \nonumber\\
   & \leq \frac{\delta^4}{4!} \|f^{(4)}\| \big[ 8 (\mathbb E[S_{k+1}^4]+16\mathbb E[X_{k+1}^4])\big] \nonumber\\
   &=\frac{8\delta^4}{4!} \|f^{(4)}\| \bigg(\mathbb E[S^4] +16\frac{4!}{\lambda^4}\bigg).
\end{align}
From \eqref{ln12}, \eqref{lnBbound}, \eqref{lnCbound}, \eqref{lnDbound}, \eqref{lnEbound},
\begin{align*}
     &\delta^2\bigg|- \mathbb E[f'(\delta D_k)]+\frac{1}{\lambda \mu} \mathbb E[f''(\delta D_k)]+f'(0)\bigg| \nonumber\\
    &\leq \delta^4 \|f'''\|\bigg[\frac{3\left(1 - \mathbb E[e^{-\lambda S}] - \lambda \mathbb E[S e^{-\lambda S}]\right)}{\lambda^2 (\mathbb E[e^{-\lambda S}])^2}+\frac{1}{\lambda \mu} +\frac{3(1-\mathbb E[e^{-\lambda S}])}{\lambda^2\mathbb E[e^{-\lambda S}]}+\frac{1-\mathbb E[e^{-\lambda S}]}{\lambda\mu \mathbb E[e^{-\lambda S}]}\bigg]+\delta^4 \|f'''\|\left(\frac{1}{\mu^2}+\frac{1}{\lambda^2}\right)\\  
    &+ \delta^4 \frac{1}{6}\| f'''\|\bigg[\frac{78}{\lambda^2}+\frac{18}{\mu^2} + \frac{18\left(1 - \mathbb E[e^{-\lambda S}] - \lambda \mathbb E[S e^{-\lambda S}]\right)}{\lambda^2 (\mathbb E[e^{-\lambda S}])^2} + \frac{6}{\mu \lambda}+\frac{6(1-\mathbb E[e^{-\lambda S}])}{\lambda^2\mathbb E[e^{-\lambda S}]}\bigg]+\frac{8\delta^4}{4!} \|f^{(4)}\| \bigg(\mathbb E[S^4] +16\frac{4!}{\lambda^4}\bigg).
\end{align*}
Therefore,
\begin{align*}
    &\bigg|- \mathbb E[f'(\delta D_k)]+\frac{1}{\lambda \mu} \mathbb E[f''(\delta D_k)]+f'(0)\bigg| \nonumber\\
    &\leq \delta^2 \|f'''\| \bigg[\frac{6\left(1 - \mathbb E[e^{-\lambda S}] - \lambda \mathbb E[S e^{-\lambda S}]\right)}{\lambda^2 (\mathbb E[e^{-\lambda S}])^2}+\frac{2}{\lambda \mu} +\frac{4(1-\mathbb E[e^{-\lambda S}])}{\lambda^2\mathbb E[e^{-\lambda S}]}+\frac{1-\mathbb E[e^{-\lambda S}]}{\lambda\mu \mathbb E[e^{-\lambda S}]}+\frac{14}{\lambda^2}+\frac{4}{\mu^2} \bigg]\\
    &+\frac{8\delta^2}{4!} \|f^{(4)}\| \bigg(\mathbb E[S^4] +16\frac{4!}{\lambda^4}\bigg).
\end{align*}
\end{proof}
\section{Proof of Lemma \ref{lemmadistancebound}}\label{app:c}
\begin{proof}[Proof of Lemma \ref{lemmadistancebound}]
From \cite[Remark 1]{HB} (with $l=k-1$ and $\beta=1$) we have 
\begin{align}\label{bound rho_k}
     d_{W,k}(\mu, \sigma) \leq 2(2^{(k-2)}k\rho_k(\mu, \sigma))^{\frac{1}{k}},
\end{align}
where
\begin{align*}
   \rho_k (\mu,\sigma)=\sup_{\substack{\|h^{(k)}\| \leq 1 \\ h^{(i)}(0)=0,\; 0 \leq i \leq k-1}} \bigg|\int h d\mu-\int h d\sigma\bigg|.
\end{align*}
If the first $k-1$ moments of $\mu$ and $\sigma$ do not match we have $d_{Zol,k}(\mu, \sigma))=\infty$ (see Remark \ref{remark1}) and \eqref{bound Wasserstein} holds. And if the first $k-1$ moments match we can take 
\begin{align*}
    \Tilde{h}(x):=h(x)-\sum_{i=0}^{k-1}\frac{h^{(i)}(0)}{i!}x^i.
\end{align*}
Hence, $\Tilde{h}^{(i)}(0)=0$ for $0 \leq i \leq k-1$ and $\Tilde{h}^{(k)}=h^{(k)}$. Further,
\begin{align*}
   \bigg|\int \tilde{h} d\mu-\int \tilde {h} d\sigma\bigg|&=\bigg|\int h d\mu-\int h d\sigma-\sum_{i=0}^{k-1}\frac{h^{(i)}(0)}{i!}\bigg(\int x^id \mu-\int x^i d\sigma\bigg) \bigg|\\
    &=\bigg|\int h d\mu-\int h d\sigma\bigg|.
\end{align*}
Therefore, if the first $k-1$ moments match we have 
\begin{align}\label{distanceequal}
    d_{Zol,k}(\mu, \sigma)=\rho_k (\mu,\sigma)
\end{align}
and \eqref{bound Wasserstein} follows from \eqref{bound rho_k}.

In the other direction if the first $k-1$ moments match, \cite{BDS} and \eqref{distanceequal} gives that
\begin{align}
    d_{Zol,k}(\mu,\sigma) \leq L_k d_{W,k}(\mu, \sigma)\bigg(\int |x|^k \mu(dx) +\int |x|^k \sigma(dx)\bigg)^{(k-1)/k}.
\end{align}
\end{proof}

\end{document}